\newcommand{\beq}{\begin{equation}}
\newcommand{\eeq}{\end{equation}}
\newcommand\numberthis{\addtocounter{equation}{1}\tag{\theequation}}
\newcommand{\PP}{\mathbb{P}}
\newcommand{\RR}{\mathbb{R}}
\newcommand{\NN}{\mathbb{N}}
\newcommand{\EE}{\mathbb{E}}
\newcommand{\ZZ}{\mathbb{Z}}
\newcommand{\stationary}{\mu}
\newcommand{\test}{\phi}
\newcommand{\step}{s}
\newcommand{\radius}{r}
\newcommand{\Dim}{d}
\newcommand{\timestep}{h} 
\newcommand{\acc}{\epsilon} 
\newcommand{\increment}{\xi} 
\newcommand{\Lip}{L} 
\theoremstyle{plain}
\newtheorem{theorem}{Theorem}
\theoremstyle{remark}
\newtheorem{remark}[theorem]{Remark}
\theoremstyle{plain}
\newtheorem{assumption}[theorem]{Assumption}
\theoremstyle{plain}
\newtheorem{corollary}[theorem]{Corollary}
\theoremstyle{plain}
\newtheorem{lemma}[theorem]{Lemma}
\theoremstyle{plain}
\newtheorem{proposition}[theorem]{Proposition}
\theoremstyle{definition}
\begin{document}

\author{Thomas Bonis\\
	LTCI, T\'el\'ecom Paristech, Universit\'e Paris-Saclay, Paris, France \\
	thomas.bonis@telecom-paristech.fr}

\date{}
	\title{Rates in the Central Limit Theorem and diffusion approximation via Stein's Method}

	\maketitle



  
   



\begin{abstract}
We present a way to use Stein's method in order to bound the Wasserstein distance of order $2$ between two measures $\nu$ and $\mu$ supported on $\RR^d$ under the assumption that $\mu$ is the reversible measure of a diffusion process. In order to apply our result, we only require to have access to a stochastic process $(X_t)_{t \geq 0}$ such that $X_t$ is drawn from $\nu$ for any $t > 0$. We then show that, whenever $\mu$ is the Gaussian measure $\gamma$, one can use a slightly different approach to bound the Wasserstein distances of order $p \geq 1$ between $\nu$ and $\gamma$ under an additional exchangeability assumption on the stochastic process $(X_t)_{t \geq 0}$. Using our results, we are able to obtain convergence rates for the multi-dimensional Central Limit Theorem in terms of Wasserstein distances of order $p \geq 2$. Our results can also provide bounds for steady-state diffusion approximation, allowing us to tackle two problems appearing in the field of data analysis by giving a quantitative convergence result for invariant measures of random walks on random geometric graphs and by providing quantitative guarantees for a Monte Carlo sampling algorithm. 
\end{abstract}



\section{Introduction}
Stein's method is a general approach to bound distances between two measures and was first introduced by Stein \citep{OriginalStein} 
to provide quantitative bounds for normal approximation. Stein's approach relies on the following observation: the one-dimensional Gaussian measure $\gamma$ is the only measure on $\RR$ such that, for any $\test \in \mathcal{C}^\infty_c(\RR)$, where $\mathcal{C}^\infty_c(E)$ denotes the space of functions defined on the space $E$ with compact support and derivatives of all orders, 
\[
\int_{\RR} \left(-x \test(x) + \test'(x)\right) d\gamma(x) = 0.
\]
Hence, if another probability measure $\nu$ satisfies
\[
\int_{\RR} \left(-x \test(x) + \test'(x)\right) d\nu(x) \approx 0
\]
on a sufficiently large set of functions $\test$, 
then $\nu$ should be close to $\gamma$. Stein's approach was later generalized to the multidimensional Gaussian measure in \citep{Gotze} and more recently to the infinite-dimensional measure of a Brownian motion in \citep{Decreuse1, Decreuse2}. Stein's method was also extended to non-Gaussian target measures such as the Poisson measure \citep{ChenPoisson} as well as more general target measures by Barbour \citep{Barbour}. In Barbour's approach, the target measure $\mu$ is a measure of $\RR^d$ assumed to be the invariant measure of a diffusion process with infinitesimal generators of the form $\mathcal{L}_\mu = b.\nabla + <a, Hess >_{HS}$, where $<.,.>_{HS}$ denotes the Hilbert-Schmidt scalar product on matrices. Under such assumptions, one has 
\[
\forall \test \in \mathcal{C}^\infty_c(\RR^d), \int_{\RR^d} \mathcal{L}_\mu \test \, d\mu = 0
\]
and, similarly to the Gaussian case, 
if another probability measure $\nu$ supported on $\RR^d$ satisfies 
\[
\int_{\RR^d} \mathcal{L}_\mu \test \, d\nu \approx 0
\]
for a sufficiently large set of functions, 
then $\nu$ should be close to $\mu$. 
More precisely, in Barbour's approach one first solves the Stein equation 
\beq
\label{eq:SteinEquation}
u  - \int_{\RR^d} u \, d\mu = \mathcal{L}_\mu f_u
\eeq
for $u$ belonging to a specific set of functions $\mathcal{U}$. 
From here, taking the integral over $\nu$ yields 
\[
\int_{\RR^d} u d\nu - \int_{\RR^d} u \, d\mu = \int_{\RR^d} \mathcal{L}_\mu f_u \, d\nu.
\]
Thus, if 
\[
\sup_{u \in \mathcal{U}} \left|\int_{\RR^d} \mathcal{L}_\mu f_u  \, d\nu \right| \leq \epsilon,
\]
then 
\[
\sup_{u \in \mathcal{U}} \left|\int_{\RR^d} u \, d\nu - \int_{\RR^d} u \, d\mu \right| \leq \epsilon. 
\]
By choosing an appropriate set $\mathcal{U}$, one is then able to obtain a bound on various distances between $\mu$ and $\nu$:
\begin{itemize}
\item if $\mathcal{U} = \{ u : \RR^d \rightarrow \RR \mid \|u\|_\infty \leq 1 \}$, then $\sup_{u \in \mathcal{U}} |\int_{\RR^d} u \, d\nu - \int_{\RR^d} u \, d\mu|$ is the total variation distance between $\mu$ and $\nu$;
\item in dimension $1$, if $\mathcal{U} = \{  u : \RR \rightarrow \RR \mid \exists t \in \RR, u(x) = 1_{x \leq t}  \}$, then $\sup_{u \in \mathcal{U}} |\int_{\RR}  u \, d\nu - \int_{\RR^d} u \, d\mu|$ is the Kolmogorov distance between $\mu$ and $\nu$;
\item if $\mathcal{U} = \{  u : \RR^d \rightarrow \RR \mid \forall x,y \in \RR^d,  |u(y) - u(x)| \leq \|y-x\|  \}$, where $\|.\|$ denotes the Euclidean norm, then $\sup_{u \in \mathcal{U}} |\int_{\RR^d} u \, d\nu - \int_{\RR^d} u \, d\mu|$ is the Wasserstein distance of order $1$, also called Kolmogorov-Rubinstein distance, between $\mu$ and $\nu$. 
\end{itemize}
However, solving Equation~(\ref{eq:SteinEquation}) usually involves computations depending on the target measure $\mu$ which can be difficult to carry out in the multidimensional setting and recent results using this approach usually deal with specific one-dimensional target measures \citep{Braverman1,Braverman3,Braverman2} or assume the target measure $\mu$ satisfies restrictive assumption: for instance \citep{log-concave} solved Equation~(\ref{eq:SteinEquation}) for log-concave target measures $\mu$ on $\RR^d$ with $\mathcal{L}_\mu = \nabla \log f . \nabla + \Delta$, where $f$ is the density of $\mu$. Another downside of this approach is that it can only be used to provide bounds in terms of a distance of the form $\sup_{u \in \mathcal{U}} \left|\int_{\RR^d} u \, d\nu - \int_{\RR^d} u \, d\mu \right|$ for a given set of functions $\mathcal{U}$. It is thus impossible to use this method to obtain bounds for distances such as Wasserstein distances of order $p > 1$ defined by 
\[
W^p_p(\mu,\nu) = \inf_{\pi}  \int_{\RR^d \times \RR^d} \|x-y\|^p \pi(dx,dy),
\]  
where $\pi$ has marginals $\mu$ and $\nu$ and $\|.\|$ is the Euclidean norm. 

Recently, \citep{Stein} proposed a new approach to bound the Wasserstein distance of order $2$ between the multidimensional Gaussian measure and a measure $\nu$. To use their approach, one requires the existence of a fucntion $\tau_\nu$, called a Stein kernel, which satisfies the following integration by parts formula 
\[
\forall \test \in C^\infty_c(\RR^d), \int_{\RR^d} \left( -x . \nabla \test(x) + <\tau_\nu(x), \nabla^2 \test(x)>_{HS} \right) d\nu(x) = 0,
\]
where $<.,.>$ is the Hilbert-Schmidt scalar product. Then, the Wasserstein distance between the Gaussian measure and $\nu$ can be bounded by a discrepancy between $\tau_\nu$ and the identity matrix $I_d$. This method can also be used to provide bounds for Wasserstein distances of any order $p \geq 1$ under a stronger definition of the Stein kernel. 
Moreover, this approach can also be applied to more general target measures $\mu$ assumed to be reversible measures of generators of the form $\mathcal{L}_\mu : \test \rightarrow b.\nabla \test + <a, Hess \, \test>_{HS}$ satisfying some technical assumptions. However, this approach still suffers from a couple issues: assumptions required on the target measures are quite restrictive and computing a Stein kernel for $\nu$, if such a function exists in the first place, can be difficult. We wish to adapt this approach by relaxing the assumptions required on the target measures and by replacing the Stein kernel with another operator $\mathcal{L}_\nu$ such that 
\[
\forall \test \in C^\infty_c(\RR^d), \int_{\RR^d} \mathcal{L}_\nu \test d \nu = 0,
\]
in which case we say $\nu$ is invariant under $\mathcal{L}_\nu$. One can then expect $\mu$ and $\nu$ to be close if $\mathcal{L}_\mu$ and $\mathcal{L}_\nu$ are similar. 
There are many ways to obtain operators $\mathcal{L}_\nu$ under which $\nu$ is invariant; in fact, such operators have been extensively used in Stein's method in order to study properties of the solution of Equation~(\ref{eq:SteinEquation}). For instance, the original approach of Stein \citep{OriginalStein} used pairs of random variables $(X,X')$ drawn from $\nu$ such that $(X,X')$ and $(X', X)$ follow the same law. Given such a pair of random variables $(X,X')$, which is called an exchangeable pair, $\nu$ is invariant under the operator $\mathcal{L}_\nu$ given by 
\[
\forall \test \in \mathcal{C}^\infty_c(\RR^d), \forall x \in \RR^d, \mathcal{L}_\nu \test (x) =  \frac{1}{s} \EE[(X' - X) (\test(X') + \test(X)) \mid X = x],
\]
where $s > 0$ is a rescaling factor. This operator $\mathcal{L}_\nu$ can then be compared to $\mathcal{L}_\mu$ using a Taylor expansion. This exchangeable pairs approach was then extended to the multidimensional setting in \citep{reinert2009}.
Similarly, couplings such as the zero-bias coupling or the size-bias coupling \citep{zerobias,sizebias} used to apply Stein's method also consist in finding particular operators $\mathcal{L}_\nu$ under which $\nu$ is invariant. However, exchangeable pairs are often easier to obtain than other type of couplings: for instance, multidimensional zero-bias couplings computed in \citep{HigherOrderZeroBias} require having access to an exchangeable pair in the first place. 
In fact, in some cases, one does not even require the exchangeability condition to hold to apply Stein's method: in dimension one,
\citep{Rollin} used two random variables $X,X'$, both drawn from $\nu$ but not necessarily exchangeables, to construct operators of the form 
\[
\forall \test \in \mathcal{C}^\infty_c(\RR), \forall x \in \RR^d, \mathcal{L}_\nu \test (x) = \frac{1}{s} \EE\left[\int_{0}^{X'} \test(y) dy  - \int_{0}^{X} \test(y) dy \mid X = x\right],
\]
with $s > 0$, to bound the distance between $\nu$ and the Gaussian measure. 
Unfortunately, this construction is restricted to the one-dimensional setting as there is no notion of primitive function in higher dimension. 
However, even in the multidimensional setting, it is still possible to use pairs of random variables $(X,X')$ to define an operator under which $\nu$ is invariant by taking 
\[
\forall \test \in \mathcal{C}^\infty_c(\RR), \forall x \in \RR^d, \mathcal{L}_\nu \test (x) = \frac{1}{s} \EE[\test(X') - \test(X) \mid X = x]. 
\]
In this work, we adapt the approach from \citep{Stein} to such operators. However, as we will see in this paper, using a single pair of random variable $(X,X')$ is not sufficient to obtain a meaningful bound.
Instead, we use a stochastic process $(X_t)_{t \geq 0}$ such that $X_t$ is drawn from $\nu$ for any $t \geq 0$ from which we derive a family of operators $((\mathcal{L}_\nu)_t)_{t \geq 0}$ under which $\nu$ is invariant by taking  
\beq
\label{eq:operator}
\forall t > 0, \forall \test \in \mathcal{C}^\infty_c(\RR^d), \forall x \in \RR^d, (\mathcal{L}_\nu)_t \test (x) = \frac{1}{s} \EE[\test(X_t) - \test(X_0) \mid X_0 = x],
\eeq
with $s > 0$. 
Using this family of operators in Theorem~\ref{thm:mainGauss}, we are able to bound the Wasserstein distance of order $2$ between $\nu$ and the Gaussian measure $\gamma$ before extending this result to more general target measures in Theorem~\ref{thm:main2}. In order to apply this latter result, the assumptions we require on the target measure $\mu$ are rather weak albeit technical: $\mu$ should be the reversible measure of a diffusion process $(Y_t)_{t \geq 0}$ with infinitesimal generator of the form $\mathcal{L}_\mu = b.\nabla + <a, Hess>_{HS}$ such that the measure of $(Y_t)_{t \geq 0}$ converges exponentially fast to $\mu$. 
Using a more direct approach, we also provide bounds on Wasserstein distances of any order $p \geq 1$ for one-dimensional normal approximation in Theorem~\ref{thm:WpGauss} and for multidimensional normal approximation in Theorem~\ref{thm:WpGaussexch}. This latter result requires an extra exchangeability assumption on the stochastic process $(X_t)_{t \geq 0}$ in order to use a family of operators $((\mathcal{L}_\nu)_t)_{t \geq 0}$ such that, for any $t > 0, \test \in \mathcal{C}^\infty_c(\RR^d), x \in \RR^d$,
\[
(\mathcal{L}_\nu)_t \test (x) = \frac{1}{s} \EE[(X_t - X_0)(\test(X_t) + \test(X_0)) \mid X_0 = x],
\]
where $s > 0$ is a rescaling factor. Let us note that, while we mostly focus on operators defined as in Equation~(\ref{eq:operator}), most of our results can be adapted to Stein kernels of to other type of couplings.  

As a first application of our results, we provide convergence rates for the Central Limit Theorem. 
\begin{theorem}
\label{thm:CTL}
Let $n > 1$ and 
let $X_1,\dots, X_n$ be independent random variables in $\RR^d$ with $\EE[X_1] = 0$ and $\EE[X_1 X_1^T] = I_d$. Let $\nu_n$ be the measure of $S_n = \frac{1}{\sqrt{n}}\sum_{i=1}^n X_i$ and let $\gamma$ be the $d$-dimensional Gaussian measure. For any $p \geq 2$, there exists $C_p > 0$ such that, if $\sum_{i=1}^n \EE[\|X_i\|^{p+q}] < \infty$ for some $q \in [0,2]$, then, taking $m, m' \in (0, \min(4,p+q) - 2]$, we have 
\begin{multline*}
W_p(\nu_n, \gamma) \leq C_p n^{-1/2} \Bigg( n^{-q/2p} \left(\sum_{i=1}^n \EE[\|X_i\|^{p+q}]\right)^{1/p} + 
 n^{-m/4} \left(\sum_{i=1}^n \EE[\|X_i\|^{2+m}]\right)^{1/2} 
 \\+ \sum_{i=1}^n \begin{cases}
 n^{-1/2 -m'/2}  |1-m'|^{-1} \|\EE[X_i X_i^T \|X_i\|^{m'}]\|  \text{ if $m' < 1$} \\
 n^{-1} \|\EE[X_i X_i^T \|X_i\|]\| \log(n) \text{ if $m' = 1$} \\
 n^{-1} |1-m'|^{-1} d^{1/2 - 1/(2m')} \|\EE[X_i X_i^T \|X_i\|^{m'}]\|^{1/m'}    \text{ if $m' > 1$}
 \end{cases}
 \Bigg).
\end{multline*}
\end{theorem}
Overall, this result extends multiple one-dimensional bounds and improves on existing multidimensional ones for identically distributed random variables. 
\begin{itemize}
\item For $p = 2, q=m=m'=2$, since $\EE[\|X_1\|^4]^{1/2} \leq d^{1/4} \|\EE[X_1 X_1^T \|X_1\|^2]\|^{1/2}$, our result gives 
\beq
W_2(\nu_n, \gamma) \leq C_2 n^{-1/2} d^{1/4} \|\EE[X_1 X_1^T \|X_1\|^2]\|^{1/2}.
\eeq
 This bound generalizes the one-dimensional result 
 \[
 W_2(\nu_n, \gamma) \leq C n^{-1/2} \EE[X_1^4]
\]
 obtained in \citep{rio2009}. We also improve on the multidimensional bound of \citep{Zhai} which requires $\|X_1\|$ to be almost surely bounded and suffers from an additional $\log n$ factor. \\
\item For $p>2, q = m = m'=2$, we obtain 
\beq
W_p(\nu_n, \gamma) \leq C_p  n^{-1/2}(\EE[\|X_1\|^{p+2}]^{1/p} + d^{1/4} \|\EE[X_1 X_1^T \|X_1\|^2]\|^{1/2}),
\eeq
generalizing the one-dimensional bounds 
$W_p \leq C_p n^{-1/2} \EE[|X_1|^{p+2}]^{1/p}$ obtained in \cite{Bobkov2016}. 
\item For $p>2, q=0, m = m' = \min(4,p+q) - 2$, we get 
\beq
W_p(\nu_n, \gamma) \leq C_p n^{-1/2+1/p} (\EE[\|X_1\|^p]^{1/p} + o(1)),
\eeq
extending a one-dimensional result from \cite{Sakhanenko}.
\end{itemize}
While our bounds are optimal with respect to the dependency in $n$, they can still be improved. Indeed, for $p,q = 2$, our bound scales at least linearly with respect to the dimension. Yet, if all the coordinates of the variables $X_1,\dots,X_n$ are independent, then one can use the one-dimensional result to obtain a bound which scales with the square root of the dimension. Hence, it is likely that tighter bounds can be obtained under additional assumptions. For instance, if the measures of the random variables satisfy a Poincar\'e inequality for a constant $C>0$, it is possible to use an approach based on the Stein kernel to bound the Wasserstein distance of order $2$ between the measure of $S_n$ and the Gaussian measure by $n^{-1/2} \sqrt{d (C - 1)}$ \citep{Fathi}, improving on our result whenever the constant $C$ is small with respect to the square root of the dimension. 

As another application of our results, we study the problem of steady-state diffusion approximation. In this problem one considers a Markov chain approximating a diffusion process and wants to compare the invariant measure of the Markov chain to the invariant measure of the approximated diffusion process. This topic is at the center of a recent series of papers \citep{Braverman1,Braverman3,Braverman2} in which Stein's method was used to obtain quantitative results. However, the computations used to derive these results are specific to the Markov chains and diffusion processes considered. In this paper, we give a general result for steady-state diffusion approximation in Corollary~\ref{cor:rw} in which we bound the Wasserstein distance of order $2$ between the invariant measure of the Markov chain and the reversible measure of the approximated diffusion process. 
Let us note that, since our approach does not require any exchangeability assumption, our result can be applied to the invariant measure of any Markov chains. On the other hand, using an approach based on exchangeable pairs would require the Markov chain considered to be reversible.
Using our result, we are able to tackle a couple of problems relative to the field of data analysis and involving invariant measures of non-reversible Markov chains. We start by providing quantitative bounds for the convergence of invariant measures of random walks on random neighbourhood graphs. We then evaluate the complexity of a Monte-Carlo algorithm for approximate sampling called Langevin Monte Carlo. 

The paper is organized as follows. In Section~\ref{sec:notations}, we introduce the notations used in this paper. In Section~\ref{sec:setting} we present the general bounds for Wasserstein distances obtained in this paper. In Section~\ref{sec:diff}, we present the steady-state diffusion approximation result along with its applications on random walks on random neighbourhood graphs and on Monte Carlo sampling. 
Then, in Section~\ref{sec:approach}, we present the main arguments we use to apply Stein's method and obtain bounds on the Wasserstein distance of order $2$ before presenting the approach followed to obtain bounds on Wasserstein distances of any order $p$ for normal approximation in Section~\ref{sec:Wp}. The necessary computations to derive Theorem~\ref{thm:CTL} from these bounds are then provided in Section~\ref{subsec:CTLcomp}. Section~\ref{sec:corrw} contains the proof of our steady-state diffusion approximation result. The computations required to apply this last result 
 to study the convergence of the invariant measures of random walks on random neighbourhood graphs and to bound the complexity of the Langevin Monte Carlo algorithm are proved in Sections~\ref{subsec:knn} and \ref{subsec:LMCproof} respectively. 
Finally, Sections~\ref{sec:technical} and \ref{sec:approx} contains the technical results and approximation arguments used to derive the general Wasserstein bounds. 
\section{Notations and definitions}
\label{sec:notations}
Let $d$ be a positive integer. 
For any $k \in \NN$, we denote by $(\RR^d)^{\otimes k}$ the set of elements of the form $(x_j)_{j \in \{1,\dots,d\}^k} \in \RR^{d^k}$.
For $x \in \RR^d$ and $k \in \NN$, we denote by $x^{\otimes k}$ the element of $(\RR^d)^{\otimes k}$ such that 
\[
\forall j \in \{1,\dots,d\}^k, (x^{\otimes k})_{j} = \prod_{i=1}^k x_{j_i}.
\]
For any $x,y \in (\RR^d)^{\otimes k}$ and any symmetric positive-definite $d \times d$ matrix $A$, we pose 
\[
<x,y>_A = \sum_{l ,j \in \{1,\dots,d\}^k} x_l y_j \prod_{i=1}^k A_{l_i,j_i},
\]
and, by extension, 
\[
\|x\|^2_A = <x,x>_A.
\]
We denote by $<.,.>$ the traditional Hilbert-Schmidt scalar product, corresponding to $<.,.>_{I_d}$, and by $\|.\|$ the associated norm. 
Finally, for any $k \in \NN$ and any $x \in (\RR^d)^{\otimes k}$, we pose 
\beq
\label{eq:hnorm}
\|x\|_H = \sum_{j \in \{1,\dots,d\}} \sum_{i \in \{1,\dots,d\}^{k-1}} x_{j, i_1,\dots,i_{k-1}} \prod_{n=1}^d \left(\sum_{l=1}^{k-1} \delta_{i_l,n}\right)!. 
\eeq

For any two spaces $E,F \subset (\RR^d)^{\otimes k}$, we denote by $C^k(E,F)$ the set of functions from $E$ to $F$ with partial derivatives of order $k \in \NN$ and we denote by 
$C^k_c(E,F)$ the set of such functions with compact support. For any function $\test\in C^\infty( \RR^d ,\RR)$ and any $x \in \RR^d$, we denote by $\nabla^k \test \in (\RR^{d})^{\otimes k}$ the $k$-th gradient of $\test$:
\[
\forall j \in \{1,\dots,d\}^k, (\nabla^k \test (x))_j = \frac{\partial^k \test}{\partial x_{j_1} \dots \partial x_{j_k} }(x).
\]

Consider a connected and open set $E \subset \RR^d$ and a matrix-valued function $a : E \rightarrow (\RR^d)^{\otimes 2}$ such that $a$ is positive-definite on all of $E$. For any $x\in E$, $a(x)$ admits an inverse matrix $a^{-1}(x)$.
We denote by $d_a$ the metric on $E$ induced by $a$ defined by 
\[
\forall x,y \in E, d_a(x,y) = \inf_{\gamma} \int_0^1 \|\gamma'(t)\|_{a^{-1}(\gamma(t))} dt, 
\]
where the infimum is taken over all curves $\gamma \in C^1( [0,1], E)$ such that $\gamma(0) = x$ and $\gamma(1)= y$. 

Given two probability measures $\mu$ and $\nu$ on $E$ and $p \geq 1$, we denote by $W_{p,a}$ the Wasserstein distance of order $p$ with respect to the metric $d_a$:
\[
W_{p,a}^p =  \inf_{\pi}  \int_{E \times E} d_a(x,y)^p \pi(dx,dy),
\]  
where $\pi$ has marginals $\mu$ and $\nu$. Finally, we denote $W_{p,I_d}$ by $W_p$. 
\section{Bounds for Wasserstein distances}
\label{sec:setting}
\subsection{Gaussian case}
Let $d$ be a positive integer, $\gamma$ be the $d$-dimensional Gaussian measure and let $\nu$ be a probability measure on $\RR^d$ with finite second moment. 
Let us denote by $\mathcal{L}_\gamma$ the operator acting on $C^\infty_c(\RR^d, \RR)$ such that 
\[
\forall \test \in C^\infty(\RR^d, \RR), x \in \RR^d, \mathcal{L}_\gamma \test (x) = -x . \nabla \test(x) + <I_d, \nabla^2 \test(x)>.
\]
As we have mentioned in the introduction, $\gamma$ is the only measure satisfying 
\[
\forall \test \in C_c^\infty(\RR^d, \RR), \int_{\RR^d} \mathcal{L}_\gamma \test(x) d\gamma(x) = 0.  
\]
Now, let $\nu$ be another measure and let $(X_t)_{t \geq 0}$ be a stochastic process such that $X_t$ is drawn from $\nu$ for any $t \geq 0$. Let $s > 0$, for $t > 0$ we pose 
\[
\forall \test \in C^\infty(\RR^d, \RR), \forall x \in \RR^d, (\mathcal{L}_\nu)_{t} \test (x) = \frac{1}{s} \EE[\test(X_t) - \test(X_0) \mid X_0 = x].
\]
Then, for any $t > 0$ and any $\test \in C^\infty_c(\RR^d, \RR)$, we have 
\[
\int_{\RR^d}  (\mathcal{L}_\nu)_{t} \test (x) d\nu(x) = \frac{1}{s}\EE[\EE[\test(X_t) - \test(X_0) \mid X_0]] = \frac{1}{s} \EE[\test(X_t) - \test(X_0)] = 0. 
\]
Using a Taylor expansion, we have, for any $\test \in C^\infty_c(\RR^d, \RR)$, any $t > 0$ and any $x \in \RR^d$,
\begin{multline*}
(\mathcal{L}_\nu)_{t} \test (x) = \frac{1}{s} \EE\bigg[\left<X_t - X_0, \nabla \test(X_0)\right>  \\
+ \frac{1}{2} \left<(X_t - X_0)^{\otimes 2}, \nabla^2 \test (X_0) \right> + O\left(\|X_t - X_0\|^3 \right) \mid X_0 = x\bigg].
\end{multline*}
Hence, if 
\begin{itemize}
\item $\EE\left[\frac{X_t- X_0}{s} \mid X_0\right]$ is close to $-X_0$;
\item $\EE\left[\frac{1}{2s} (X_t - X_0)^{\otimes 2} \mid X_0 \right]$ is close to $I_d$ and 
\item $\frac{\|X_t - X_0\|^3}{s}$ is small
\end{itemize}
for all $t > 0$, then $\mathcal{L}_\nu$ should be close to $\mathcal{L}_\gamma$ and thus $\nu$ should be close close to $\gamma$. 
In this paper, we turn this intuition into an actual bound for the Wasserstein distance of order $2$ between $\nu$ and $\gamma$. 

\begin{theorem}
\label{thm:mainGauss}
Let $\nu$ be a probability  measure on $\RR^d$ with finite second moment and let $(X_t)_{t \geq 0}$ be a stochastic process such that $X_t$ is drawn from $\nu$ for any $t > 0$. 
Suppose that
\beq
\label{eq:condition}
\forall \epsilon > 0, \exists \xi, M > 0, \forall t \in [\epsilon, \epsilon^{-1}],  \EE\left[e^{\frac{(1+\xi) \|X_t - X_0\|^2}{e^{2t}-1}}\right] \leq M. 
\eeq
Then, for any $s > 0$, 
\[
W_2(\nu,\gamma) \leq \int_0^\infty e^{-t} \EE[S(t)]^{1/2} dt, 
\]
where 
\begin{align*}
S(t) = &   \left\|\EE\left[\frac{X_t-X_0}{s} + X_0\mid X_0\right] \right\|^2  \\
& + \frac{1}{e^{2t} - 1}\left\|\EE\left[\frac{(X_t-X_0)^{\otimes 2}}{2s} - I_d \mid X_0\right] \right\|^2 \\
& + \sum_{k=3}^{\infty} \frac{1}{ (s k!)^2 (e^{2t} - 1)^{k-1} } \|\EE[(X_t-X_0)^{\otimes k}\mid X_0]\|_H^2.
\end{align*}
\end{theorem}


\begin{remark} 
If we were using a single pair of random variables $(X,X')$ rather than the stochastic process $(X_t)_{t \geq 0}$ (that is, if $X_t = X'$ for any $t > 0$), then the function $S$ defined in the Theorem would not be integrable for small values of $t$ unless $X' = X$ in which case one would obtain a trivial bound for $W_2(\nu,\gamma)$. 
\end{remark}

\begin{remark}
\label{rem:condition}
A condition such as Equation~(\ref{eq:condition}) appears in all our bounds derived using Stein's method. In practice, when applying these results to obtain Theorem~\ref{thm:CTL} and Propositions~\ref{pro:randomgraph} and \ref{pro:LMC}, we use processes $(X_t)_{t \geq 0}$ such that $\forall t > 0, \|X_t - X_0\| \leq C \sqrt{t}$, for some $C > 0$, thus verifying such a condition. 
\end{remark}

When dealing with the Gaussian measures, we are also able to bound the Wasserstein distances of order $p \geq 1$ with a similar approach whenever $d = 1$. 

\begin{theorem}
\label{thm:WpGauss}
Let $p \geq 1$.
Let $\nu$ be a probability measure on $\RR$ with finite moment of order $p$. Let $(X_t)_{t \geq 0}$ be a stochastic process such that $X_t$ is drawn from $\nu$ for any $t > 0$. 
Suppose that
\[
\forall \epsilon > 0, \exists \xi, M > 0, \forall t \in [\epsilon, \epsilon^{-1}],  \EE\left[e^{\frac{ p (1 + \xi) \max(1, p-1) |X_t - X_0|^2}{2(e^{2t}-1)}}\right] \leq M. 
\]
Then, for any $s > 0$,
\[
W_p(\nu,\gamma) \leq \int_0^\infty e^{-t} \EE[S_p(t)^{p/2}]^{1/p} dt,
\]
where 
\begin{align*}
S_p(t) =  &  \EE\left[\frac{X_t-X_0}{s} +X_0 \mid X_0\right]^2 \\
& + \frac{\max(1, p-1)}{e^{2t} -1}  \EE\left[\frac{(X_t-X_0)^{2}}{2 s}  - 1 \mid X_0 \right]^2 \\
& +  \sum_{k=3}^{\infty} \frac{\max(1, p-1)^{k-1}}{s^2 k k! (e^{2t}-1)^{k-1} }  \EE[(X_t-X_0)^{k}\mid X_0]^2.
\end{align*}
\end{theorem}

This last result can also be extended to the multidimensional setting at the cost of an additional exchangeability assumption. 

\begin{theorem}
\label{thm:WpGaussexch}
Let $p \geq 1$.
Let $\nu$ be a probability measure on $\RR^d$ with finite moment of order $p$. Let $(X_t)_{t \geq 0}$ be a stochastic process such that $X_0$ is drawn from $\nu$ and such that the pairs $(X_0,X_t)$ and $(X_t, X_0)$ follow the same law for any $t > 0$. 
Suppose that, for any $\epsilon > 0$,
\[
\exists \xi, M > 0, \forall t \in [\epsilon, \epsilon^{-1}],  \EE\left[\|X_t - X_0\|^{p(1+\xi)} e^{\frac{ p(1+\xi) \max(1, p-1) \|X_t - X_0\|^2}{2(e^{2t}-1)}}\right] \leq M. 
\]
Then, for any $s > 0$, 
\[
W_p(\nu,\gamma) \leq \int_0^\infty e^{-t} \EE[S_p(t)^{p/2}]^{1/p} dt,
\]
where, 
\begin{align*}
S_p(t)  & = \left\|\EE\left[\frac{X_t-X_0}{s}+X_0 \mid X_0\right]\right\|^2 \\
& +  \frac{\max(1, p-1)}{e^{2t}-1}  \left\|\EE\left[\frac{(X_t-X_0)^{\otimes 2}}{2 s} - I_d \mid X_0 \right]\right\|^2 \\
& +  \sum_{k=3}^{\infty}  \frac{\max(1, p-1)^{k-1}}{4 (s (k-1)!)^2 (e^{2t}-1)^{k-1}}   \left\|\EE[(X_t-X_0)^{\otimes k} \mid X_0] \right\|_H^2.
\end{align*}
\end{theorem}
\subsection{General case}
\label{sec:general}

Let $d$ be a positive integer, $E'$ be a domain of $\RR^d$ and let $E \subset E'$ be a convex domain such that $0 \in E$. Consider two functions $a \in C^\infty(E', (\RR^d)^{\otimes 2})$ and $b \in C^\infty(E, \RR^d)$ such that 
$a(x)$ is symmetric and positive-definite for any $x \in E'$ and $(E', d_a)$ is a complete metric space. 

Let $\mu$ be a measure on $E$ and let us assume this measure is the reversible measure of a Markov process $(P_t)_{t \geq 0}$ with infinitesimal generator $\mathcal{L}_\mu$ where 
\[
\forall \test \in C^\infty_c(E, \RR), x \in E, \mathcal{L}_\mu f(x) = b(x). \nabla f(x) + <a(x), \nabla^2 f(x)>.
\]
We denote by $\Gamma_1$ the carr\'e du champ operator defined by
\[
\forall f,g \in C^\infty(E, \RR), \Gamma_1(f,g) = <\nabla f, \nabla g>_a
\]
and  by $\Gamma_2$ the operator defined for any $f,g \in C^\infty(E, \RR)$ by 
\beq
\label{eq:gamma2}
\Gamma_2(f,g) = 
\frac{1}{2} \left[ \mathcal{L}_\mu (\Gamma_1(f,g)) - \Gamma_1(\mathcal{L}_\mu f, g) - \Gamma_1( f, \mathcal{L}_\mu g) \right].
\eeq
Moreover, we assume that there exists $\rho \in \RR$ such that 
\[
\forall \test \in C^\infty(E, \RR), \Gamma_2(\test,\test) \geq \rho \Gamma_1(\test,\test).
\]
In the framework of \citep{Markov}, the structure $(E, \mu, \Gamma_1)$ is called a Markov triple and, when the previous assumption is verified, one says that the Markov triple verifies $CD(\rho, \infty)$ condition.
Under this assumption, 
 the semigroup $(P_t)_{t \geq 0}$ enjoys many regularizing properties which will prove crucial in our approach, see Proposition~\ref{pro:curvdim}. 
 We also assume that, for any measure $\eta$ such that $d \eta = h d \mu$, the measure $\eta_t$ with $d\eta_t = P_t h \, d\mu$ converges exponentially fast to $\mu$. More precisely, we assume there exists $c  \geq 1, \kappa > 0$ such that 
\[
\forall t > 0, W_{2,a}(\nu_t,\mu) \leq c e^{-\kappa t}  W_{2,a}(\nu,\mu). 
\]
While such an exponential convergence to $\mu$ is verified whenever $\rho > 0$ with $\kappa = \rho$ and $c = 1$ (see Theorem 9.7.2 \citep{Markov}), it can also be obtained under weaker assumptions. For example,  
if $a = I_d$ and $b = -\nabla V$, where $V \in C^\infty(\RR^d, \RR)$, this property is satisfied whenever $V$ is strongly convex outside a bounded set $C$ 
with bounded first and second order derivatives 
on $C$ \citep{GGB}. An extension of this result for more general functions $a$ is proposed in Theorem 2.1 \citep{Fwang}.

Let us summarize the assumptions we have made on $\mu$ so far. 
\begin{assumption}
\label{ass:main}
\begin{itemize}
\item[(i)] $E'$ is a domain of $\RR^d$ and $E \subset E'$ is a convex domain such that $0 \in E$.
\item[(ii)] $b \in C^\infty(E, \RR^d)$ and $a \in C^\infty(E', (\RR^d)^{\otimes 2})$ is symmetric positive definite on all of $E'$ and $(E', d_a)$ is a complete metric space.
\item[(iii)] $\mu$ is a probability measure such that $d_a(.,0) \in L_2(\mu)$ and $\|b\|_{a^{-1}} \in L_2(\mu)$. Furthermore, $\mu$ is a reversible measure of the operator $\mathcal{L}_\mu . = <b, \nabla .> + <a, \nabla^2 .>$.
\item[(iv)] The operator $\mathcal{L}_\mu$ is the infinitesimal generator of the Markov semigroup $(P_t)_{t \geq 0}$.
\item[(v)] There exists $\rho \in \RR$ such that 
\[
\forall \test \in C^\infty(E, \RR), \Gamma_2(\test,\test) \geq \rho \Gamma_1(\test,\test). 
\]
\item[(vi)] There exists $c \geq 1,\kappa > 0$ such that, for any probability measure $\eta$ verifying $d_a(.,0) \in L_2(\eta)$ and such that $d\eta = h d\mu$, 
 \[
 W_{2,a}(\eta_t,\mu) \leq c e^{-\kappa t}  W_{2,a}(\eta,\mu),
 \]
 where $\eta_t$ has measure $P_t h$.
\end{itemize}
\end{assumption}
Along with these assumptions on $\mu$, we require several assumptions on $\nu$ and on the stochastic process $(X_t)_{t \geq 0}$. Before stating these assumptions, let us introduce  a set of functions $(f_k)_{k \in \NN^\star}$ where, for any $k \in \NN^\star$, 
\[
f_k(t) = \begin{cases} 
 e^{- \rho t \max(1,k/2)} \left( \frac{2\rho d}{e^{2\rho t/(k-1)}-1} \right)^{(k-1)/2}  \textit{ if } \rho \neq 0 \\
\left(\frac{d (k-1)}{t}\right)^{(k-1)/2} \textit{ if } \rho = 0 
\end{cases}.
\]
\begin{assumption}
\label{ass:mainnu}
\item[(i)] $\nu$ is a probability measure on $E$ such that $d_a(.,0) \in L_2(\nu)$ and $\|b\|_{a^{-1}} \in L_2(\nu)$.
\item[(ii)] $(X_t)_{t \geq 0}$ is a stochastic process such that $X_t$ is drawn from $\nu$ for any $t > 0$. 
\item[(iii)]  
$
\forall t_0, t_1, \exists \xi, M > 0, \forall t \in [t_0, t_1], \EE[d_a(X_0, X_t)^{2 + \xi}] \leq M. 
$
\item[(iv)] 
$
\forall t_0, t_1, \exists \xi, M > 0, \forall t \in [t_0, t_1], \EE\left[\left(\sum_{k=1}^\infty \frac{f_k(t)}{k!} \left\|X_t-X_0\right\|^{k}_{a^{-1}(X_0)} \right)^{2 + \xi} \right] \leq M. 
$
\end{assumption}
Under these assumptions, we obtain the following bound. 
\begin{theorem}
\label{thm:main2}
Suppose Assumptions~\ref{ass:main} and \ref{ass:mainnu} are verified. 
Let $s> 0 $ and let 
\begin{align*}
 S(t) = & f_1(t) \left\|\EE\left[\frac{X_t-X_0}{s} - b(X_0) \mid X_0\right] \right\|_{a^{-1}(X_0)}\\
& +  f_2(t) \left\|\EE\left[\frac{(X_t-X_0)^{\otimes 2}}{2s} - a(X_0)\mid X_0\right] \right\|_{a^{-1}(X_0)}\\
& + \sum_{k=3}^{\infty} \frac{f_k(t)}{s k!} \left\|\EE[(X_t-X_0)^{\otimes k}\mid X_0]\right\|_{a^{-1}(X_0)}.
\end{align*}
Then, for any $T > 0$, we have 
\[
 W_{2}(\nu,\mu) \leq \frac{\int_0^T \EE[S(t)^2]^{1/2} dt}{ 1 - c e^{- \kappa T}}.
\]
\end{theorem}
\section{Invariant measures and diffusion approximation}
\label{sec:diff}
Let $(M_n)_{n \in \NN}$ be a Markov chain with 
invariant measure $\nu$ and suppose $M_0$ is drawn from $\nu$.
 For $t> 0, \tau > 0$, we pose
\beq
\label{eq:rwsto}
X_t = M_0 + 1_{t \geq \tau} (M_1 - M_0).
\eeq
Depending on the value of $t$, $X_t$ is either equal to $M_1$ or to $M_0$. As $\nu$ is the invariant measure of the Markov chain $(M_n)_{n \in \NN}$ and since $M_0$ is drawn from $\nu$ then $M_1$ is drawn from $\nu$. Hence, $X_t$ is drawn from $\nu$ for any $t > 0$. We can thus apply Theorem~\ref{thm:main2} with the stochastic process $(X_t)_{t \geq 0}$ to bound $W_2(\nu, \mu)$.
\begin{corollary} 
\label{cor:rw}
Suppose Assumption~\ref{ass:main} is verified. Let $(M_n)_{n \geq 0}$ be a Markov chain with transition kernel $K$ and suppose the invariant measure $\nu$ of the Markov chain and the stochastic process $(X_t)_{t \geq 0}$ with $X_t$ given by Equation~(\ref{eq:rwsto}) satisfy Assumption~\ref{ass:mainnu}. 
Then, denoting $X_0$ by $X$, we have that, for any $T > 0$, there exists $C > 0$ such that, for any $T > \tau > 0, \step > 0$,
\begin{align*}
(1&-c e^{-\kappa T} )  W_{2,a}(\nu, \mu) \leq \\
&  C \left( \tau \EE[\|b(X)\|^2_{a^{-1}(X)}]^{1/2} + \EE\left[\left\| \frac{1}{\step} \int_{y \in \RR^d} (y-X) K(X,dy)  - b(X) \right\|^2_{a^{-1} (X)}\right]^{1/2}\right) \\
 & + C^2 \sqrt{d} \left(\sqrt{d \tau} +  \EE\left[\left\| \frac{1}{\step} \int_{y \in \RR^d} \frac{(y-X)^{\otimes2}}{2} K(X,dy)  - a(X) \right\|^2_{a^{-1}(X)}\right]^{1/2} \right) \\
 & + C^3 \frac{\log(\tau) d}{\step} \EE\left[\left\|  \int_{y \in \RR^d} (y-X)^{\otimes 3} K(X,dy)\right\|^2_{a^{-1}(X)}\right]^{1/2} \\
 & + \sum_{k=4}^\infty C^k \frac{(d (k-1))^{(k-1)/2}}{(k-1) k! \tau^{(k-3)/2}  \step} \EE\left[\left\|  \int_{y \in \RR^d} (y-X)^{\otimes k} K(X,dy)\right\|^2_{a^{-1}(X)}\right]^{1/2}. 
\end{align*}
\end{corollary}
Let us remark that the quantities appearing in our bound are natural as they appear in standard diffusion approximation results, see e.g. Section 11.2 \citep{SV}. 
Moreover, let us note that the pair $(M_1,M_0)$ is an exchangeable pair if and only if the Markov chain $(M_n)_{n \in \NN}$ is reversible. Thus, in order to use the exchangeable pairs technique, one would need to assume $(M_n)_{n \in \NN}$ to be reversible. In the following, we consider two applications in which we use our result to study the convergence of invariant measures of non-reversible Markov chains. 

\subsection{Invariant measure of random walks on nearest neighbours graphs}
\label{subsec:graph}
Let $X_1, \dots, X_n$ be independent and identically distributed  random variables on $\RR^d$, drawn from a measure $\mu$ with density $f \in C^\infty(\RR^d, \RR)$. 
Let $\mathcal{X}_n$ be the set of points $(X_1,\dots,X_n)$ and let $\radius_{\mathcal{X}_n}$ be a function from $\RR^d$ to $\RR^+$.
We call random neighbourhood graph a graph $G_n$ with vertices $\mathcal{X}_n$ and edges $\{(x,y) \in \mathcal{X}^2 \mid \|x-y\|^2 \leq \radius_{\mathcal{X}_n}(x)\}$. 
These graphs are at the center of many data analysis algorithms 
\citep{eigenmap,labelprop,spectral}. 
However, such algorithms usually rely on properties of a random neighbourhood graph and discard all other information regarding the data. However, one does not know whether all the relevant statistical information regarding the data $\mathcal{X}_n$ is contained in a given random neighbourhood graph. To answer this question, \citep{roadmap} proposed to check whether it is possible to estimate the density $f$ from which the data is drawn using only the structure of a random neighbourhood graph. If one can compute a good estimator of $f$ from a graph $G_n$, then we can expect this graph to contain most of the relevant information regarding the original data $\mathcal{X}_n$. 

As $n$ grows to infinity, it has been shown by \citep{Ting2011} that, if $\radius_{\mathcal{X}_n}$ converges, after a proper rescaling, to a deterministic function $\tilde{\radius} : \RR^d \rightarrow \RR^+$, 
then random walks on the random neighbourhood graphs $G_n$ with radii $\radius_{\mathcal{X}_n}$ converge to a  
diffusion process with infinitesimal generator 
\[
\mathcal{L}_{\tilde{\mu}} = \tilde{\radius}^2 \left( \nabla \log f . \nabla + \frac{1}{2} \Delta \right).
\]
Since the invariant measure $\tilde{\mu}$ of the limiting diffusion process has a density proportional to $\frac{f^2}{\tilde{\radius}^2}$, 
it is possible to derive an estimator of $f$ from an estimator of $\tilde{\mu}$. As random walks on the graphs $G_n$ converge to diffusion processes with invariant measure $\tilde{\mu}$, it seems natural to use the invariant measures of random walks on the graphs $G_n$ to estimate $\tilde{\mu}$. In fact, \citep{Hashimoto} proved that, under technical assumption on $\radius_{\mathcal{X}_n}$, invariant measures of these random walks converge weakly to $\tilde{\mu}$.  
Let us show how our results can be used to quantify this convergence by tackling the specific case of nearest neighbours graphs, which are quite popular in data analysis thanks to their sparsity. 

Nearest neighbours graphs are obtained by picking an integer $k > 0$ and putting an edge between two points $X_i$ and $X_j$ if and only if $X_j$ is one of the $k$-nearest neighbours of $X_i$. Equivalently, a $k$-nearest neighbours graph corresponds to a random neighbourhood graph with radius function 
\[
\radius_{\mathcal{X}_n}(x) = \inf \left\{r \in \RR^+ | \sum_{i =1}^n 1_{\|X_i - x\| \leq r} \geq k \right\}.
\]
If $k$ is correctly chosen, random walks on such graphs are approximation of a diffusion process with infinitesimal generator 
\[
\mathcal{L}_{\tilde{\mu}} = f^{-2/d} (\nabla \log f . \nabla +  \frac{1}{2}\Delta).
\]
This diffusion process admits an invariant measure $\tilde{\mu}$ with a density proportional to $f^{2+2/d}$ and can thus be used to compute an estimator of $f$. 
To avoid boundary issues, let us assume that $\mu$ is supported on the flat torus $\mathcal{T} = (\RR / \ZZ)^d$ with strictly positive density $f \in C^\infty(\mathcal{T}, \RR^+)$. 
For any integer $k \leq n$, we denote by $\pi_{k,n}$ an invariant measure of a random walk 
on the $k$-nearest neighbour graphs with vertices $\mathcal{X}_n$. In Section~\ref{subsec:knn} we prove the following result. 

\begin{proposition}
\label{pro:randomgraph}
There exists $C > 0$ such that, for any positive integers $k,n$, 
\[
\PP\left(W_2(\pi_{k,n}, \tilde{\mu}) \leq C\left(\frac{\sqrt{\log n} n^{1/d}}{k^{1/2 + 1/d}} + \left(\frac{k}{n}\right)^{1/d} \right) \right) \geq 1 - \frac{C}{n}.
\] 
\end{proposition}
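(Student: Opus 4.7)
The plan is to apply Corollary~\ref{cor:rw} to the simple random walk on the $k$-nearest-neighbor graph $G_n$, with target generator $\mathcal{L}_{\tilde\mu} = f^{-2/d}(\nabla \log f \cdot \nabla + \Delta)$, so that $b(x) = f^{-2/d}(x)\nabla\log f(x)$ and $a(x) = f^{-2/d}(x) I_d$. Because $f$ is smooth and bounded away from zero on the compact torus $T$, the operator $\mathcal{L}_{\tilde\mu}$ is uniformly elliptic with smooth coefficients; it satisfies a $CD(\rho,\infty)$ condition for some $\rho \in \RR$, admits a spectral gap, and hence contracts $W_2$ exponentially at some rate $\kappa > 0$, so the inputs required by Corollary~\ref{cor:main} are available.

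The heart of the argument is uniform concentration of empirical quantities on balls of radius $\sim (k/n)^{1/d}$. Since the class of Euclidean balls on $T$ has finite VC dimension, a Bernstein union bound yields, with probability at least $1-C/n$, uniformly in $x \in T$,
\[
\radius_{\mathcal{X}_n}(x) = \left(\frac{k}{n\,|B(0,1)|\,f(x)}\right)^{1/d}\!\bigl(1+O(\sqrt{\log n/k})\bigr).
\]
Conditionally on this event, $K(x,dy)$ is the uniform measure on the $k$ sample points in $B(x,\radius_{\mathcal{X}_n}(x))$. Taylor-expanding $f$ inside this ball and applying the same VC concentration to the vector- and matrix-valued integrands $(y-x)$ and $(y-x)^{\otimes 2}$ gives, uniformly in $x$,
\begin{align*}
\int (y-x)\,K(x,dy) & = \tfrac{\radius_{\mathcal{X}_n}(x)^2}{d+2}\nabla \log f(x) + O\bigl(\radius_{\mathcal{X}_n}(x)^4 + \radius_{\mathcal{X}_n}(x)^2 \sqrt{\log n/k}\bigr),\\
\tfrac{1}{2}\int (y-x)^{\otimes 2}\,K(x,dy) & = \tfrac{\radius_{\mathcal{X}_n}(x)^2}{2(d+2)} I_d + O\bigl(\radius_{\mathcal{X}_n}(x)^4 + \radius_{\mathcal{X}_n}(x)^2 \sqrt{\log n/k}\bigr).
\end{align*}
The crucial observation is that $\radius_{\mathcal{X}_n}(x)^2 f(x)^{2/d}$ is constant in $x$ to leading order, equal to $(k/(n|B(0,1)|))^{2/d}$, so a single scalar $s \asymp (k/n)^{2/d}$ simultaneously matches $\int(y-x)K(x,dy) \approx s\,b(x)$ and $\tfrac12\int(y-x)^{\otimes 2}K(x,dy) \approx s\,a(x)$, up to the displayed errors.

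Plugging this into Corollary~\ref{cor:rw} with $\tau = s \asymp (k/n)^{2/d}$, the drift-error and diffusion-error terms each contribute of order $s\sqrt{\log n/k} = \sqrt{\log n}\,(k/n)^{1/d}/\sqrt{k} = \sqrt{\log n}\,n^{1/d}/k^{1/2+1/d}$, while the pure Taylor-remainder contributions and the $\tau\|b\|_{a^{-1}}$ discretisation term scale like $(k/n)^{1/d}$. For the higher-order summands $k\geq 3$, the almost-sure bound $\|y-x\|\leq \radius_{\mathcal{X}_n}(x) \asymp (k/n)^{1/d}$ makes them geometric in $\radius/\sqrt{\tau} = O(1)$: the $k=3$ term produces an extra $O((k/n)^{1/d})$ (with the $\log\tau$ factor absorbed) and the tail $k\geq 4$ is dominated by the same rate. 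Summing and using the $(1-e^{-\kappa})^{-1}$ factor gives the claimed bound on the event of probability at least $1-C/n$. The main obstacle is the simultaneous uniform control, at the density-estimation rate $\sqrt{\log n/k}$, of both the random radius $\radius_{\mathcal{X}_n}$ and the local empirical first and second moments against the smooth limits $b$ and $a$; once VC/Bernstein bounds over the class of linear and quadratic weights on Euclidean balls are in place on a single event of probability $1-C/n$, the remaining steps are Taylor expansion and substitution into Corollary~\ref{cor:rw}, with the delicate bookkeeping lying in pinning down the exact exponent $1/2 + 1/d$ on $k$ and the logarithmic factor.
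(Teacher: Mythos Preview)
Your overall strategy is the paper's strategy: verify the hypotheses of Corollary~\ref{cor:rw} for $\mathcal{L}_{\tilde\mu}$, choose $s=\tau\asymp(k/n)^{2/d}$, control the empirical moments of the kernel by concentration plus Taylor expansion, and read off the rate. One minor difference is that the paper does not need VC-type uniformity over the whole torus: since $\pi_{k,n}$ is supported on $\{X_1,\dots,X_n\}$, a pointwise Bernstein bound at each $X_i$ followed by a union bound over $n$ points already gives the required $1-C/n$ probability. Your VC route would work too, but is heavier than necessary.

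There is, however, a concrete gap in the first-moment (drift) estimate. You write
\[
\int (y-x)\,K(x,dy)=\tfrac{r^2}{d+2}\nabla\log f(x)+O\bigl(r^4+r^2\sqrt{\log n/k}\bigr),
\]
but the stochastic fluctuation of $\tfrac1k\sum_{X_j\in B(x,r)}(X_j-x)$ is of order $r\sqrt{\log n/k}$, not $r^2\sqrt{\log n/k}$: each summand is bounded by $r$, there are $\sim k$ of them, and Bernstein gives a deviation $\sim r\sqrt{\log n}/\sqrt{k}$. After dividing by $s\asymp r^2$ this yields
\[
\frac{r\sqrt{\log n/k}}{s}\;\asymp\;\frac{\sqrt{\log n/k}}{r}\;=\;\frac{\sqrt{\log n}\,n^{1/d}}{k^{1/2+1/d}},
\]
which is precisely the dominant term in the proposition and the source of the exponent $1/2+1/d$ you flag as ``delicate bookkeeping''. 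With your stated error $r^2\sqrt{\log n/k}$ the drift contribution would only be $\sqrt{\log n/k}$, which is too small. Your subsequent line ``$s\sqrt{\log n/k}=\sqrt{\log n}(k/n)^{1/d}/\sqrt{k}=\sqrt{\log n}\,n^{1/d}/k^{1/2+1/d}$'' contains two algebraic slips (neither equality holds for general $k<n$) that happen to land on the correct target. The second-moment concentration $O(r^2\sqrt{\log n/k})$ is correct, but note that the drift and diffusion errors do \emph{not} contribute at the same order: the drift error is larger by a factor $(n/k)^{1/d}$ and is what drives the first term of the bound; the $(k/n)^{1/d}$ term in the statement comes from the $\sqrt{\tau d}$ discretisation piece in Corollary~\ref{cor:rw} (with $\tau=s$), not from $\tau\|b\|$, which is only of order $(k/n)^{2/d}$. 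Fixing the first-moment fluctuation to $r\sqrt{\log n/k}$ and redoing the arithmetic honestly closes the gap.
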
 
In particular, if $n >> k >> (\log n)^{d/(2+d)}  n^{2/(2+d)}$ then $W_2(\pi_{k,n}, \tilde{\mu})$ converges, in terms of Wasserstein distance of order $2$, to $\tilde{\mu}$. 
However, a couple issues still remain. First, we quantify the convergence of $\pi_{k,n}$ in the space of measures which is not sufficient to obtain its point-wise convergence which would be necessary to evaluate the performance of an estimator of $f$ computed from $\pi_{k,n}$.
Furthermore, our bound is likely to be suboptimal. In our bound, $k$ should be at least of order $\log(n)^{d/(d+2)}n^{2/(2+d)}$ for $W_2(\pi_{k,n}, \tilde{\mu})$ to go to zero. Yet, such a result is 
 counterintuitive as the requirements on $k$ get weaker as the dimension of the data increases even though we would expect the task of estimating $\tilde{\mu}$ to be more complex in higher dimensions. In fact, it is conjectured in \citep{Hashimoto} that it is sufficient for $n >> k >> \log(n)$ for $\pi_{k,n}$ to converge to $\tilde{\mu}$. 

\subsection{Analysis of a one-dimensional scheme for the Langevin Monte-Carlo algorithm}
\label{subsec:LMC}
In Bayesian statistics, one often needs to sample points from a probability measure $\stationary$ on $\RR^\Dim$ with density $f \in C^\infty(\RR^d, \RR)$. 
To solve this task, multiple sampling algorithms based on the Monte-Carlo approach were proposed and analyzed. 
We want to show how our results can be used to study the complexity of a simple Monte-Carlo algorithm. 

Remark that the measure $\stationary$ is a reversible measure for the diffusion process $Y_t$ with infinitesimal generator 
\[
\mathcal{L}_\stationary = - \nabla u . \nabla + \Delta,
\]
where $u = - \log f$. Thus, under mild assumptions on $\stationary$, the measure of $(Y_t)_{t \geq 0}$ converges to $\stationary$ as $t$ goes to infinity. Hence, when $t$ is sufficiently large, the measure of $Y_t$ should be close to the measure of $\mu$, in which case it is possible to obtain samples from a measure close to $\mu$ by sampling the measure of $Y_t$. As it is not possible to have access to the measure of the continuous process $(Y_t)_{t \geq 0}$ in practice, one needs to rely on a discrete approximation of $(Y_t)_{t \geq 0}$. For instance, one can use the 
Euler-Maruyama approximation scheme with timestep $\timestep > 0$ given by a Markov chain $(M_n)_{n \in \NN}$ with $M_0 = 0$ and transitions
\[
M_{n+1} = M_n  - \timestep \nabla u(M_n) + \sqrt{2 \timestep} \mathcal{N}_{n},
\]
where $\mathcal{N}_{1},\dots, \mathcal{N}_{n}$ is a sequence of independent normal random variables with mean $0$ and covariance matrix $I_\Dim$. 
If the timestep $\timestep$ is small enough, one can expect the invariant measure $\pi$ of $(M_n)_{n \in \NN}$ to be close to $\mu$ and, 
for $n$ large enough, the measure of $M_n$ should be close 
to $\pi$ and thus close to $\stationary$. 
This approach to sampling, introduced in \citep{LMC}, is known as the Langevin Monte-Carlo (LMC) algorithm. 

One may then wonder how large $n$ should be 
for a given metric between $\mu$ and $\nu_n$, the measure of $M_n$, to be smaller than a given accuracy threshold $\epsilon > 0$. Answering this question is linked to the choice of the timestep $\timestep$ as this parameter must satisfy some trade-off: large values of $\timestep$ lead to a poor approximation of $\mu$ by $\pi$ 
but the smaller $\timestep$ is, the larger the number of iterations required for $\nu^n$ to be close to $\pi$.
Recently, \citep{DurmusMoulinesW} proved that, if $-u$ is a strictly convex function (i.e. $\stationary$ is a strictly log-concave measure) and $\nabla u$ is Lipschitz continuous, 
then the LMC algorithm can reach an accuracy $\acc$ for the Wasserstein distance of order $2$ 
in no more than $O(\acc^{-2} \Dim \log(d/\acc))$ steps.
Since the complexity of each step of the Euler-Maruyama discretization is of order $d$, the overall complexity of the algorithm is bounded by $O(\acc^{-2} d^{2} \log(d/\acc))$. 
This rate can be improved whenever $\nabla^2 u$ is Lipschitz continuous, in which case one only requires $O(\acc^{-1} \sqrt{\Dim} \log(d/\acc))$ steps to reach an accuracy $\acc$, meaning the complexity of the algorithm is bounded by $O(\acc^{-1} d^{3/2} \log(d/\acc))$.
One may wonder whether other discretization schemes could outperform the Euler-Maruyama scheme. However, the approach used to obtain the previous bounds are specific to the Euler-Maruyama scheme and cannot be used to evaluate the performance of another discretization scheme. Let us see how this can be done using our results. 
For instance, 
let $e_1,\dots, e_d \in \RR^d$ be the canonical basis of $\RR^d$, $(I_n)_{n \in \NN}$ be independent uniform random variables on $\{1,\dots,d\}$,  $(B_n)_{n \in \NN}$ be independent Rademacher random variables and let us consider the discretization scheme $(M_n)_{n \in \NN}$ with transitions 
\beq
\label{eq:schema}
M_{n+1} = M_n + \left(- \timestep \frac{\partial u}{\partial x_{I_n}} (M_n) + \sqrt{2 \timestep} B_n\right) e_{I_n}.
\eeq
Following the computations presented in Section~\ref{subsec:LMCproof}, we obtain the following result. 
\begin{proposition}
\label{pro:LMC}
Let $\stationary$ be a measure of $\RR^d$ with density $f \in C^\infty(\RR^d, \RR)$ and let $u = - \log f$. 
Suppose $\nabla u(0) = 0$ and assume there exists $\rho > 0, \Lip > 0$ such that, for all $i \in \{1,\dots,d\}$ and $x,y \in \RR^d$, 
\beq
\label{eq:assu1}
\left(\frac{\partial u}{\partial x_i}(y) - \frac{\partial u}{\partial x_i}(x)\right) (y_i - x_i)  \leq - \rho (y_i - x_i)^2
\eeq
 and 
\beq
\label{eq:assu2}
\left| \frac{\partial u}{\partial x_i}(y) - \frac{\partial u}{\partial x_i}(x)\right| \leq \Lip \left| y_i - x_i \right|.
\eeq
Let $\timestep > 0$ and let $(M_n)_{n \geq 0}$ be the Markov chain with $M_0 = 0$ and increments given by Equation~(\ref{eq:schema}). 
Then, there exist constants $C_1, C_2 > 0$ depending on $\rho$ and $\Lip$  such that, for any $\epsilon > 0$, if $\timestep = C_1 \epsilon^2 \Dim^{-3}$ and $n = C_2 \timestep^{-1} \Dim \log(\Dim / \epsilon)$, then 
the measure $\nu^n$ of $M_n$ satisfies 
\[
W_2(\nu^n, \mu) \leq \epsilon.
\]
Moreover, if $\stationary$ is the Gaussian measure then the previous result holds true with $\timestep = C_1 \epsilon^2 \Dim^{-1}$. 
\end{proposition}
Since each step of this one-dimensional discretization has a complexity independent of the dimension, the overall complexity of the LMC algorithm with the discretization scheme given by Equation~(\ref{eq:schema}) is bounded by 
$O(\acc^{-2}\Dim^{4} \log(\Dim/\acc))$ and by $O(\acc^{-2}\Dim^2 \log(\Dim/\acc))$ when $\stationary$ is the Gaussian measure. The discrepancy between the Gaussian case and the general case is due to the 
dependency on the dimension of the function $f_k$ defined in Proposition~\ref{pro:curvdim} which we believe to be suboptimal, see Remark~\ref{rem:suboptimal}. Hence we conjecture the correct complexity of the LMC algorithm using the discretization scheme given by Equation~(\ref{eq:schema}) to be bounded by $O(\acc^{-2}\Dim^2 \log(\Dim/\acc))$ for target measures $\mu$ satisfying the assumptions of Proposition~\ref{pro:LMC}. Under this conjecture, this one-dimensional discretization scheme is still outperformed by the Euler-Maruyama scheme which reaches the complexity  $O(\acc^{-1} d^{3/2} \log(d/\acc))$ whenever $\nabla^2 u$ is Lipschitz continuous. As this slightly stronger assumption should be verified in most practical cases, the Euler-Maruyama scheme should be more efficient than our one-dimensional scheme in practice.

 \section{Bounds for the Wasserstein distance of order $2$: proofs of Theorems~\ref{thm:mainGauss} and \ref{thm:main2}}
\label{sec:approach}
Let us assume Assumptions~\ref{ass:main} and \ref{ass:mainnu} are verified or, if the target measure is the Gaussian measure, that the assumptions of Theorem~\ref{thm:mainGauss} are verified. 
In this Section, we assume that the measure $\nu$ admits a density $h$ with respect to $\mu$ such that $h = \epsilon + f$
 for some $\epsilon > 0$ and $f \in C^\infty_c(E,\RR^+)$. The general results are then obtained thanks to an approximation argument detailed in Section~\ref{sec:approx}. 

For any $t > 0$, we denote by $\nu_t$ the measure with density $P_t h$. 
Since $\nu_t$ converges exponentially fast to $\mu$,  
it is sufficient to control the Wasserstein distance between $\nu_t$ and $\nu$ for a large enough $T > 0$ in order to bound the Wasserstein distance between $\nu$ and $\mu$. 
Such a control can be derived from the following estimate, obtained in \cite{WangOV} Section 3, 
\beq
\label{eq:OV}
\frac{d^+}{dt} W_{2,a}(\nu, \nu_t) \leq \left(\int_E \frac{\|\nabla P_t h(x)\|_{a(x)}^2}{P_t h(x)} d \mu(x) \right)^{1/2}.
\eeq
The quantity in the right-hand side of the inequality is actually the square root of the Fisher information of the measure $\nu_t$ with respect to $\mu$, which we denote by $I_\mu(\nu_t)$. For $t > 0$, we pose $v_t = \log P_t h$. Let us express $I_\mu(\nu_t)$ for $t > 0$ using the stochastic process $(X_t)_{t \geq 0}$. 
 
\begin{proposition} 
\label{pro:Ibound}
For any $t > 0$,
\begin{align*}
I_\mu(\nu_t) & = \EE\bigg[\left<\EE\left[\frac{X_t-X_0}{s}  - b(X_0)\mid X_0 \right], \nabla P_t v_t(X_0)\right> \\
& \qquad + \left<\EE\left[\frac{(X_t-X_0)^{\otimes 2}}{2s}  - a(X_0) \mid X_0 \right], \nabla^2 P_t v_t(X_0)\right>\\
& \qquad + \sum_{k=3}^{\infty}  \left<\EE\left[\frac{(X_t-X_0)^{\otimes k}}{sk!}\mid X_0 \right], \nabla^k P_t v_t(X_0) \right> \bigg].
\end{align*}
\end{proposition}

The last step of the proof of Theorems~\ref{thm:mainGauss} and \ref{thm:main2} consists in exploiting the regularizing properties of the semigroup $(P_t)_{t \geq 0}$ in order to bound the right-hand term of the bound in Proposition~\ref{pro:Ibound} by a quantity involving the moments of $X_t - X_0$ and $\EE[P_t \|\nabla v_t\|^2_a(X_0)]^{1/2} = I_\mu(\nu_t)^{1/2}$. From here, we are able to obtain a bound on $I_\mu(\nu_t)^{1/2}$ which can then be turned into a bound on $W_{2,a}(\mu,\nu)$ thanks to Equation~(\ref{eq:OV}). 

\begin{proof}[Proof of Proposition~\ref{pro:Ibound}]
Let $t > 0$. We have
\begin{align*}
\mathcal{L}_\mu v_t & = b.\nabla v_t + <a, \nabla^2 v_t> \\
& = \frac{1}{P_t h} \left(b . \nabla P_t h + \left<a, \nabla^2 P_t h - \frac{(\nabla P_t h)^{\otimes 2}}{P_t h} \right> \right) \\
& = \frac{1}{P_t h} \left(\mathcal{L}_\mu P_t h - \frac{\|\nabla P_t h\|_{a}^2}{P_t h} \right).
\end{align*}
Therefore, 
\[
I_\mu(\nu_t) = \int_E \frac{\|\nabla P_t h(x)\|_{a(x)}^2}{P_t h(x)} d \mu(x) = \int_{E} \left( \mathcal{L}_\mu P_t h (x) - P_t h (x)\mathcal{L}_\mu v_t (x) \right) d \mu(x). 
\]
Since $h$ is bounded from above and form below, so are $P_t h$ and $v_t = \log(P_t h)$. Thus, by Proposition~\ref{pro:curvdim}, there exists $C > 0$ such that $|\mathcal{L}_\mu P_t h| \leq C (\|b\|_{a^{-1}} + 1)$. Since $\|b\|_{a^{-1}} \in L_1(\mu)$, 
$\mathcal{L}_\mu P_t h \in L_1(\mu)$. Moreover, 
since $\mu$ is an invariant measure of the operator $\mathcal{L}_\mu$, we have 
\[
\int_{E} \mathcal{L}_\mu P_t h (x) d \mu(x) = 0.
\]
Therefore, 
\[
I_\mu(\nu_t) = -\int_{E} P_t h(x) \mathcal{L}_\mu v_t (x) d \mu(x).
\]
Then, by the symmetry of $(P_t)_{t \geq 0}$ with respect to the measure $\mu$, 
\[
I_\mu(\nu_t)= - \int_{E} h(x) P_t \mathcal{L}_\mu v_t (x) d \mu(x) = - \int_{E} P_t \mathcal{L}_\mu v_t (x) d \nu(x).
\]
Finally, since $\mathcal{L}_\mu$ is the infinitesimal generator of the semigroup $(P_t)_{t \geq 0}$, we can permute $P_t$ and $\mathcal{L}_\mu$ to obtain 
\beq
\label{eq:Inu}
I_\mu(\nu_t) = -\int_{E}  \mathcal{L}_\mu P_t v_t (x) d \nu(x).
\eeq

Taking $\step > 0$, we define the operator $\mathcal{L}_\nu$ such that, for any bounded and measurable function $\test$, 
\[
\forall x \in E, \mathcal{L}_\nu \test (x) = \frac{1}{\step} \EE\left[ \test(X_t) - \test(X_0)  |X_0 = x\right].
\]
Since $X_t$ and $X_0$ are drawn from the same law, integrating this operator with respect to $\nu$ gives 
\[
\int_E \mathcal{L}_\nu \test (x) d\nu(x) = \frac{1}{\step} \EE[\test(X_t) - \test(X_0)] = 0.  
\]
Hence, 
we can inject $\int_E \mathcal{L}_\nu P_t v_t (x) d\nu(x)$ in Equation~(\ref{eq:Inu}) to obtain 
\beq
\label{eq:auxmain}
I_\mu(\nu_t) = \int_{E}  (\mathcal{L}_\nu - \mathcal{L}_\mu) P_t v_t (x) d \nu(x)
\eeq
To conclude the proof of the Proposition we need to rewrite $\mathcal{L}_\nu$ in order to be able to compare it to $\mathcal{L}_\mu$. This is done in the following result, proved in Section~\ref{sec:analproof}, by showing that $P_t v_t$ is real analytic on $E$ and by using a Taylor expansion which concludes the proof of Proposition~\ref{pro:Ibound}. 
\begin{lemma}
\label{lem:analytic}
Let $t > 0$ and let $\test \in C^\infty(\RR^d, \RR)$ such that $\|\nabla \test\|_a < M$ for some $M > 0$. We have 
\[
\int_E \mathcal{L}_\nu P_t \test (x) d\nu(x) = \EE\left[\sum_{k=1}^\infty \frac{1}{s k!} \left<\EE[(X_t-X_0)^{\otimes k} \mid X_0], \nabla^k P_t \test(X_0)\right>\right]. 
\]
\end{lemma}
\end{proof}

\subsection{Gaussian case: proof of Theorem~\ref{thm:mainGauss}}
\label{subsec:gauss}
Let $\mu$ be the $d$-dimensional Gaussian measure $\gamma$ where $d\gamma(x)~=~(2\pi)^{-d/2} e^{-\frac{\|x\|^2}{2}} dx$ for any $x \in \RR^d$. The measure $\gamma$ is the reversible measure of the operator $\mathcal{L}_\gamma$ where 
\[
\forall \test \in C_c^\infty(\RR^d, \RR), x \in \RR^d, \mathcal{L}_\gamma \test (x) = -x . \nabla \test(x) + <I_d, \nabla^2 \test(x)>
\] 
whose associated semigroup $(P_t)_{t \geq 0}$ is the Ornstein-Uhlenbeck semigroup.

For any $k \in \NN$ and any $i \in \{1,\dots,d\}^k$, let $H_i \in C^\infty(\RR^d, \RR)$ be the multivariate Hermite polynomial of index $i$, defined for any $x \in \RR^d$ by 
\beq
\label{eq:hermite}
H_i(x) = (-1)^k e^{\frac{\|x\|^2}{2}} \frac{\partial^k}{\partial x_{i_1} \dots \partial x_{i_k}} e^{-\frac{\|x\|^2}{2}}.
\eeq
We have the following result.
\begin{lemma}
\label{lem:ippgauss}
For any bounded function $\test \in C^\infty(\RR^d, \RR)$, any $k \in \NN$ and any $i \in \{1,\dots,d\}^k$, we have 
\[
\frac{\partial^k P_t \test}{\partial x_{i_1} \dots \partial x_{i_k}}(x) = \frac{e^{-t}}{(e^{2t}-1)^{k/2}}  \int_{\RR^d} H_i (y) \test(xe^{-t} + \sqrt{1-e^{-2t}}y) d\gamma(y).
\]
\end{lemma}

Thus, 
since Hermite polynomials form an orthogonal basis of $L^2(\gamma)$ with norms
\[
\forall i \in \{1,\dots,d\}^k, \|H_i\|^2_{\gamma} = \int_{\RR^d} H_i^2 (y) d\gamma(y) = \prod_{j=1}^d \left(\sum_{l=1}^k \delta_{i_l,j}\right)!,
\]
applying the previous Lemma to the vector field $v_t$ yields, for any $x \in \RR^d$,
\begin{align*}
\sum_{j \in \{1,\dots,d\}} & \sum_{k=0}^\infty   \sum_{i\in \{1,\dots,d\}^{k}} \frac{ (e^{2t} - 1)^{k}}{e^{-2t} \|H_i\|^2_\gamma}   (\nabla^{k+1} P_t v_t)_{j,i_1,\dots,i_{k}}^2 (x)\\ 
& =  \sum_{j \in \{1,\dots,d\} } \sum_{k=0}^\infty \sum_{i\in\{1,\dots,d\}^{k}} \left( \int_{\RR^d} \frac{H_i(y)}{\|H_i\|_\gamma} (\nabla v_t (xe^{-t} + \sqrt{1-e^{-2t}}y))_j d\gamma(y) \right)^2 \\
& = \sum_{j \in \{1,\dots,d\}} \int_{\RR^d} (\nabla v_t (xe^{-t} + \sqrt{1-e^{-2t}}y))_j^2 d\gamma(y) \\
& =  \int_{\RR^d} \|\nabla v_t (xe^{-t} + \sqrt{1-e^{-2t}}y)\|^2 d\gamma(y).
\end{align*}
Then, since $\nabla^k P_t v_t = e^{-t} \nabla^{k-1} P_t \nabla v_t$, we have  
\beq
\label{eq:sommeipp}
\sum_{\substack{k \geq 0 \\ j \in \{1,\dots,d\} \\ i\in \{1,\dots,d\}^{k}}} \frac{(e^{2t} - 1)^{k}}{e^{-2t} \|H_i\|^2_\gamma}   (\nabla^{k+1} P_t v_t)_{j,i_1,\dots,i_{k}}^2 (x)= P_t \|\nabla v_t\|^2 (x).
\eeq
Let us pose 
\begin{align*}
S(t) = &  \left\|\EE\left[\frac{X_t-X_0}{s} + X_0\mid X_0\right] \right\|^2  \\
&+ \frac{1}{e^{2t} - 1} \left\|\EE\left[\frac{(X_t-X_0)^{\otimes 2} }{2s} - I_d\mid X_0\right] \right\|^2 \\
& + \sum_{k=3}^{\infty} \frac{1 }{ (s k!)^2 (e^{2t} - 1)^{k-1} } \|\EE[(X_t-X_0)^{\otimes k}\mid X_0]\|_H^2.
\end{align*}
Applying Proposition~\ref{pro:Ibound} and using Cauchy-Schwarz inequality, we obtain 
\[
I_\gamma(\nu_t) \leq e^{-t} \EE[S(t)]^{1/2} \left(\EE\left[\sum_{\substack{k \geq 0 \\ j \in \{1,\dots,d\} \\ i\in \{1,\dots,d\}^{k}}} \frac{e^{2t}(e^{2t} - 1)^{k}}{\|H_i\|^2_\gamma} (\nabla^{k+1} P_t v_t (X_0))_{j,i_1,\dots,i_{k}}^2 \right]\right)^{1/2}.
\]
Thus, by Equation~(\ref{eq:sommeipp}) and since $v_t = \log(P_t h)$, we have
\begin{align*}
I_\gamma(\nu_t) & \leq e^{-t} \EE[S(t)]^{1/2} \EE[P_t \|\nabla v_t(X_0)\|^2]^{1/2}  \\
& = e^{-t}\EE[S(t)]^{1/2}\left(\int_{\RR^d} \frac{\|\nabla P_t h\|^2}{(P_t h)^2}  \, d\nu_t\right)^{1/2} \\
& = e^{-t}\EE[S(t)]^{1/2}\left(\int_{\RR^d} \frac{\|\nabla P_t h\|^2}{P_t h}  \, d\mu\right)^{1/2} \\
& = e^{-t}\EE[S(t)]^{1/2}I_\gamma(\nu_t)^{1/2}.
\end{align*}
Finally, since $I_\gamma(\nu_t)$ is finite, we end up with 
\[
I_\gamma(\nu_t)^{1/2} \leq e^{-t}\EE[S(t)]^{1/2}
\]
and, by Equation~(\ref{eq:OV}), 
\[
W_2(\nu,\gamma) \leq \int_0^\infty I_\gamma(\nu_t)^{1/2} dt \leq \int_0^\infty e^{-t}\EE[S(t)]^{1/2} dt,
\]
concluding the proof of Theorem~\ref{thm:mainGauss}.

\subsection{General case: proof of Theorem~\ref{thm:main2}} 
Let us first apply Proposition~\ref{pro:Ibound} and use Cauchy-Schwarz inequality in order to obtain
\begin{align*}
I_\mu(\nu_t) \leq & \EE\left[\left\|\EE\left[\frac{X_t-X_0}{s} - b(X_0)\mid X_0 \right] \right\|_{a^{-1}(X_0)} \|\nabla P_t v_t(X_0)\|_{a(X_0)}\right]  \\
& + \EE\left[\left\|\EE\left[\frac{(X_t-X_0)^{\otimes 2}}{2s}  - a(X_0) \mid X_0\right]\right\|_{a^{-1}(X_0)} \|\nabla^2 P_t v_t(X_0)\|_{a(X_0)} \right]  \numberthis \label{eq:I}\\
& + \sum_{k=3}^{\infty}  \EE\left[\left\|\EE\left[\frac{(X_t-X_0)^{\otimes k}}{sk!} \mid X_0\right]\right\|_{a^{-1}(X_0)} \|\nabla^k P_t v_t(X_0)\|_{a(X_0)}\right] .
\end{align*}
Our objective is to bound $\|\nabla^k P_t v_t\|_{a}$ by a quantity involving $P_t \|\nabla v_t\|_a$. Since we assumed there exists $\rho \in \RR$ such that 
\beq
\label{eq:CDcondition}
\forall \test \in C^\infty_c(E, \RR), \Gamma_2(\test,\test) \geq \rho \Gamma_1(\test,\test),
\eeq
 we know (see e.g. Theorem 3.2.3 \citep{Markov}) that 
$(P_t)_{t \geq 0}$ verifies the following gradient bound 
\beq
\label{eq:gradbound}
\forall \test \in C^\infty_c(\RR^d), \|\nabla P_t \test\|_a \leq e^{- \rho t} P_t ( \|\nabla \test\|_a),
\eeq
allowing us to bound the first term in Equation~(\ref{eq:I}). 
The proof of Theorem 4.1 of \citep{Stein} makes use of the Gamma operators $(\Gamma_k)_{k \geq 1}$ defined recursively from the $\Gamma_1$ operator by 
\begin{multline}
\label{eq:gammaoperators}
\forall k > 1, f,g \in C^\infty(E, \RR), \Gamma_{k}(f,g) = \\
\frac{1}{2} \left[ \mathcal{L}_\mu (\Gamma_{k-1}(f,g)) - \Gamma_{k-1}(\mathcal{L}_\mu f, g) - \Gamma_{k-1}( f, \mathcal{L}_\mu g) \right]
\end{multline}
to show that, if there exist $\kappa, \sigma > 0$ such that, for any $\test \in C^\infty_c(E, R)$, $\Gamma_3 (\test, \test)\geq \kappa \Gamma_2(\test, \test)$ and $\Gamma_2 (\test, \test) \geq \sigma \|\nabla^2 \test\|_a$, then 
\beq
\label{eq:di}
\forall \test \in C^\infty_c(\RR^d), \|\nabla^2 P_t \test\|_a^2 \leq \frac{\kappa}{\sigma(e^{\kappa t}-1)}  P_t \|\nabla \test\|^2_a.
\eeq
However, such assumptions are usually hard to check in practice. Let us consider a simple one-dimensional example for which $\mathcal{L}_\mu \test = -u' \test' + \test''$. 
In this case, Equation~(\ref{eq:CDcondition}) is verified as long as $u'' \geq \rho$. On the other hand, following the computations of Section 4.4 \citep{Stein}, in order to have $\Gamma_3(\test,\test) \geq 3 c \Gamma_2(\test,\test)$ and $\Gamma_2(\test,\test) \geq c \|\test''\|_a$ for some $c > 0$, one requires
\[
 u^{(4)} - u' u^{(3)} + 2 (u'')^2 - 6 c u'' \geq 0
 \]
 and 
 \[
 3 (u^{(3)})^2 \leq 2(u'' -c ) (u^{(4)} - u' u^{(3)} + 2 (u'')^2 - 6 c u'' ).
 \]
Even in this rather simple case, such conditions can be difficult to check in practice and obtaining on $\|\nabla^k P_t \test\|_a$ for $k > 2$ in a similar manner would require even stronger assumptions. 
Instead, we rely on the following result, proved in Section~\ref{sec:curvdim}, which provides bounds on $\|\nabla^k P_t \test\|_a$ under a simple $CD(\rho, \infty)$ condition. 
\begin{proposition}
\label{pro:curvdim} 
Under Assumption~\ref{ass:main}, if $\test \in C^\infty(E, \RR)$ is a bounded function then 
\[
\forall k > 0, t > 0, \|\nabla^k P_t \test\|_a \leq f_k(t) \sqrt{P_t \|\nabla \test\|^2_a}, 
\]
where 
\[
f_k(t) = \begin{cases} 
 e^{- \rho t \max(1,k/2)} \left( \frac{2\rho d}{e^{2\rho t/(k-1)}-1} \right)^{(k-1)/2}  \textit{ if } \rho \neq 0 \\
\left(\frac{d (k-1)}{t}\right)^{(k-1)/2} \textit{ if } \rho = 0 
\end{cases}.
\]
\end{proposition}

\begin{remark}
\label{rem:suboptimal}
The bounds we obtain are not dimension-independent as one could expect from the Gaussian case or from Equation~(\ref{eq:di}). We believe this dependency to be an artifact of the proof.
\end{remark}

Injecting this Proposition in Equation~(\ref{eq:I}) and using Cauchy-Schwarz inequality and the triangle inequality, we obtain that 
\begin{align*}
I_\mu(\nu_t) & \leq  \left(\EE[S(t)^2] \EE[P_t \|\nabla v_t\|_a^2  (X_0)]\right)^{1/2} \\
& \leq  \left(\EE[S(t)^2]\EE[\|\nabla v_t(X_t)\|_{a(X_t)}^2] \right)^{1/2} \\
& \leq \left(\EE[S(t)^2] I_\mu(\nu_t)\right)^{1/2},
\end{align*}
where 
\begin{align*}
 S(t) = & f_1(t) \left\|\EE\left[\frac{X_t-X_0}{s} - b(X_0) \mid X_0\right] \right\|_{a^{-1}(X_0)}\\
& +  f_2(t) \left\|\EE\left[\frac{(X_t-X_0)^{\otimes 2}}{2s} - a(X_0)\mid X_0\right] \right\|_{a^{-1}(X_0)} \\
& + \sum_{k=3}^{\infty} \frac{f_k(t)}{s k!} \left\|\EE[(X_t-X_0)^{\otimes k}\mid X_0]\right\|_{a^{-1}(X_0)}.
\end{align*} 
Then, by Equation~(\ref{eq:OV}), 
\[
\forall T > 0, W_{2,a}(\nu, \nu_T) \leq \int_0^T e^{-t} \EE[S(t)^2]^{1/2} dt. 
\]
Finally, we can use our assumption on the convergence speed of $\nu_t$ to $\mu$ to obtain that 
\begin{align*}
W_{2,a}(\nu,\mu) & \leq W_{2,a}(\nu, \nu_T) + W_{2,a}(\nu_T, \mu) \\
& \leq  W_{2,a}(\nu, \nu_T) + c e^{-\kappa T} W_{2,a}(\nu, \mu),
\end{align*}
and thus
\[
(1 - c e^{- \kappa T}) W_{2,a}(\nu,\mu) \leq W_{2,a}(\nu, \nu_T) \leq \int_0^T e^{-t} \EE[S(t)^2]^{1/2} dt,
\]
concluding the proof of Theorem~\ref{thm:main2}. 

\section{Gaussian measure and Wasserstein distances of any order: proofs of Theorems~\ref{thm:WpGauss} and \ref{thm:WpGaussexch}}
\label{sec:Wp}
Let $p \geq 1$ and let $\nu$ be a measure on $\RR^d$. 
In this Section, we assume the measure $\nu$ admits a density $h$ with respect to $\gamma$ such that $h = \epsilon +f$ with $\epsilon > 0$ and $f \in C^\infty_c(\RR^d, \RR^+)$. Theorem~\ref{thm:WpGauss} and Theorem~\ref{thm:WpGaussexch} are then obtained through an approximation argument developed in Section~\ref{sec:approxW2}.

In order to bound the $W_p$ distance between $\nu$ and the $d$-dimensional Gaussian measure $\gamma$, it is possible to use Stein kernels to obtain an explicit expression of the score function $v_t = \log(P_t h)$, where $P_t$ is the Ornstein-Uhlenbeck semigroup \citep{Stein}. Then, by \cite{WangOV} Section 3, we have 
\[
\frac{d+}{dt} W_p(\nu, \nu_t) \leq \left( \int_{\RR^d} \|  \nabla v_t \|^p d\nu_t \right)^{1/p},
\]
leading to
\beq
\label{eq:OVp}
W_p(\nu,\gamma) \leq \int_0^\infty \left( \int_{\RR^d} \|  \nabla v_t \|^p d\nu_t \right)^{1/p} dt.
\eeq
In this Section, we use a similar approach without relying on a Stein kernel. Instead, let us consider a stochastic process $(X_t)_{t \geq 0}$ satisfying the assumptions of either Theorem~\ref{thm:WpGauss} or \ref{thm:WpGaussexch}. 
Let $Z$ be a Gaussian random variable independent from the process $(X_t)_{t \geq 0}$ and let $F_t = e^{-t} X_0 + \sqrt{1-e^{-2t}} Z$.
Let us provide 
a version of $\nabla v_t (F_t)$. 
\begin{lemma}
\label{lem:score}
Let $t > 0$. Then, 
\[
\forall x \in \RR^d, \rho_t = \EE\left[e^{-t}X_0 - \frac{e^{-2t}}{\sqrt{1-e^{-2t}}} Z\mid F_t\right]
\] 
is a version of $\nabla v_t (F_t)$. 
\end{lemma}
\begin{proof}
Let $t > 0$.
Integrating by parts with respect to $\gamma$, we have, for any $\test \in \mathcal{C}^\infty_c(\RR^d)$, 
\begin{align*}
\int_{\RR^d} \nabla \test (x) d \nu_t (x) &= \int_{\RR^d} \nabla \test (x) P_t h (x) d \gamma (x)  \\
 & = \int_{\RR^d} \nabla  (\test P_t h) (x) - \test (x) \nabla P_t h (x) d \gamma (x) \\
 & = \int_{\RR^d} x  \test (x) P_t h (x) - \test (x) \nabla P_t h (x) d\gamma (x) \\
 & = \int_{\RR^d} P_t h(x)  \test (x) \left(x - \frac{\nabla P_t h (x)}{P_t h(x)}\right) d\gamma (x) \\
 & = \int_{\RR^d}  \test (x) \left(x - \frac{\nabla P_t h (x)}{P_t h(x)}\right) d\nu_t (x).
\end{align*}
Thus, we have 
\beq
\label{eq:scorecarac}
\forall \test \in \mathcal{C}^\infty_c(\RR^d), \int_{\RR^d} \nabla \test (x) d \nu_t (x) = \int_{\RR^d} \test (x) ( x - \nabla v_t (x)) d \nu_t (x)
\eeq
In fact, this property completely characterizes $\nabla v_t$: if another function $\xi : \RR^d\rightarrow \RR^d$ is such that
\[
\forall \test \in \mathcal{C}^\infty_c(\RR^d),  \int_{\RR^d} \nabla \test (x) d \nu_t (x) =  \int_{\RR^d} \test (x) ( x - \xi (x)) d \nu_t (x),
\]
then  
\[
\forall \test \in \mathcal{C}^\infty_c(\RR^d),  \int_{\RR^d} \test (x) (\nabla v_t (x) - \xi(x)) d\nu_t = 0,
\]
implying that $\xi = \nabla v_t$ almost everywhere with respect to the measure $\nu_t$.  

Now, let $\test \in C^\infty_c(\RR^d, \RR)$. By an integration by parts with respect to the Gaussian measure, we have 
\begin{align*}
\EE[\test(F_t) (F_t - \rho_t)] & = \EE\left[\test(F_t) \left(F_t - e^{-t}X_0 + \frac{e^{-2t}}{\sqrt{1-e^{-2t}}} Z\right)\right] \\
& = \EE\left[\frac{1}{\sqrt{1-e^{-2t}}}\test(F_t) Z\right] \\
& = \EE[\nabla \test(F_t)]
\end{align*}
and thus $\rho_t$ satisfies Equation~(\ref{eq:scorecarac}) which implies it is a version of $\nabla v_t$. 
\end{proof}

Coming back the proofs of Theorem~\ref{thm:mainGauss} and \ref{thm:main2}, we need to estimate the quantity $\EE[\|\rho_t\|^p]$, where $\rho_t$ is defined in the previous Lemma, to be able to bound $W_p(\nu,\gamma)$. 
To do so, suppose there exists a quantity $\tau_t$ defined using $X_0$ and $Z$ and such that $\EE[\tau_t \mid F_t] = 0$ almost surely. Then, by Jensen's inequality, we obtain 
\begin{align*}
\EE[\|\nabla \rho_t\|^p] & = \EE\left[\left\|\EE\left[e^{-t}X_0 - \frac{e^{-2t}}{\sqrt{1-e^{-2t}}} Z\mid F_t \right]\right\|^p\right] \\
& = \EE\left[\left\|\EE\left[\tau + e^{-t}X_0 - \frac{e^{-2t}}{\sqrt{1-e^{-2t}}} Z\mid F_t \right]\right\|^p\right] \\
& \leq \EE\left[\left\|\tau + e^{-t}X_0 - \frac{e^{-2t}}{\sqrt{1-e^{-2t}}} Z\right\|^p\right].
\end{align*}
Thus, if $\tau_t$ is close to $e^{-t}X_0 - \frac{e^{-2t}}{\sqrt{1-e^{-2t}}} Z$ then $\EE[\|\nabla \rho_t\|^p]$ is small and, by Equation~(\ref{eq:OVp}), so is $W_p(\nu, \gamma)$.  

As in the proof of Theorem~\ref{thm:mainGauss}, our computations are going to involve the Hermite polynomials.  
For any $k \in \NN$, let $\mathcal{H}_k : \RR^d \rightarrow (\RR^d)^{\otimes k}$ be the tensor of Hermite polynomials of order $k$ given by 
\[
\forall x \in \RR^d, \forall i \in \{1,\dots, d\}^k, (\mathcal{H}_k(x))_i = H_i(x),
\]
where for any $i \in \{1,\dots, d\}^k$, $H_i$ is the multi-dimensional Hermite polynomial of index $i$ defined in Equation~(\ref{eq:hermite}). 
For any $k \in \NN$, any $M \in (\RR^d)^{\otimes k}$ and any $N \in (\RR^d)^{\otimes k-1}$, let $MN \in \RR^d$ such that 
\[
\forall i \in \{1,\dots,d\}, (MN)_i = \sum_{j \in \{1,\dots,d\}^{k-1}} M_{i, j_1,\dots,j_{k-1}} N_j.
\]
In the following Sections, we rely on the following property verified by Hermite polynomials.
\begin{lemma}
\label{lem:hypercon}
Let $(M_k)_{k \in \NN}$ be such that $\forall k \in \NN, M_k \in (\RR^d)^{\otimes k}$. Then, 
\[
\EE[\|\sum_{k=1}^\infty M_k \mathcal{H}_{k-1}(Z) \|^p]^{2/p} \leq \begin{cases}
\sum_{k=1}^\infty \|M\|^2_H \text{ if $1 \leq p \leq 2$} \\
\sum_{k=1}^\infty (p-1)^{k-1} \|M\|^2_H \text{ if $p > 2$} 
\end{cases}. 
\]
\end{lemma}

\subsection{One-dimensional case: proof of Theorem~\ref{thm:WpGauss}}
\label{sec:WpGauss1}
Let us assume that $d = 1$ and let $(X_t)_{t \geq 0}$ be a stochastic process satisfying the assumptions of Theorem~\ref{thm:WpGauss}. 

\begin{lemma}
\label{lem:tau1}
Let $t,s > 0$.
Taking 
\[
\tau_t = \sum_{k=1}^{\infty} \frac{e^{-kt}}{s k! \sqrt{1-e^{-2t}}^{k-1}} \EE[(X_t-X_0)^k  \mid X_0] \mathcal{H}_{k-1}(Z),
\] 
we have 
\[
\EE[\tau_t \mid F_t] = 0. 
\]
\end{lemma}

\begin{proof}
Let $\test \in \mathcal{C}^\infty_c(\RR, \RR)$. For any $k \in \NN$, we denote by $\test^{(k)}$ the $k$-th derivative of $\test$. 
Let $k \in \NN$. Since $X_0$ and $Z$ are independent, we can use Lemma~\ref{lem:ippgauss} to obtain
\[
\EE[\mathcal{H}_k(Z) \test(F_t)] = \EE[\mathcal{H}_k(Z) \test(e^{-t}X_0 + \sqrt{1 - e^{-2t}}Z)] = (1 - e^{-2t})^{k/2} \EE[\test^{(k)} (F_t)].
\]
Therefore, we have 
\begin{align*}
\EE[\EE[\tau_t \mid F_t] \test(F_t)] & = \EE[\tau_t \test(F_t)]\\
& =  \frac{1}{s} \EE\left[\sum_{k=1}^{\infty} \frac{e^{-kt}}{k!} (X_t-X_0)^k \test^{(k-1)}(F_t)\right].
\end{align*}
Now, let $\Phi$ be a primitive function of $\test$. By Lemma~\ref{lem:analytic}, the function $x \rightarrow \EE[\Phi(F_t)\mid X_0=x] = P_t \Phi (x)$ satisfies 
\begin{align*}
\EE[P_t \Phi(X_t) -P_t \Phi(X_0)]  & =\EE\left[\sum_{k=1}^{\infty} \frac{e^{-kt}}{k!}  (X_t-X_0)^k (P_t \test)^{(k-1)}(X_0)\right]\\
& = \EE\left[\sum_{k=1}^{\infty} \frac{e^{-kt}}{k!} (X_t-X_0)^k \test^{(k-1)}(F_t)\right] \\
& = s \EE[\EE[\tau_t \mid F_t] \test(F_t)].
\end{align*}
Then, since $X_t$ and $X_0$ are both drawn from $\nu$, we have
\[
\EE[\EE[\tau_t \mid F_t] \test(F_t)] = \frac{1}{s}\EE[P_t \Phi(X_t) -P_t \Phi(X_0)]  = 0,
\]
implying that $\EE[\tau_t \mid F_t] = 0$ almost surely. 
\end{proof}
Returning to the proof of Theorem~\ref{thm:WpGauss}, using Lemma~\ref{lem:tau1} along with 
Lemma~\ref{lem:score} and Jensen's inequality, we obtain 
\begin{align*}
\EE[|\rho_t|^p] & = \EE\left[\left|\EE\left[e^{-t} X_0 + \frac{e^{-2t}}{\sqrt{1-e^{-2t}}} Z + \tau_t \mid F_t\right]\right|^p\right] \\
& \leq \EE\left[\left|e^{-t} X_0 + \frac{e^{-2t}}{\sqrt{1-e^{-2t}}} Z + \tau_t\right|^p\right].
\end{align*}
Then, by Lemma~\ref{lem:hypercon}, we have 
\[
\EE[|\rho_t|^p]^{1/p} = \EE_{X_0}[ \EE_Z[|\rho_t|^p] ]^{1/p}\leq e^{-t} \EE[S_p(t)^{p/2}]^{1/p},
\]
where
\begin{align*}
S_p(t) =  & \EE\left[\frac{X_t-X_0}{s} +X_0 \mid X_0\right]^2 \\
& + \frac{ \max(1,p-1)}{ e^{2t} -1}  \EE\left[\frac{(X_t-X_0)^{2}}{2 s}  - 1 \mid X_0 \right]^2 \\
& +  \sum_{k=3}^{\infty} \frac{\max(1,p-1)^{k-1}}{s^2 k k!  (e^{2t}-1)^{k-1} }  \EE\left[(X_t-X_0)^{k}\mid X_0\right]^2.
\end{align*}
Finally, by Equation~(\ref{eq:OVp}), we obtain that 
\[
W_p(\nu,\gamma) \leq \int_0^\infty \EE[|\rho_t|^p]^{1/p} dt \leq \int_0^\infty e^{-t} \EE[S_p(t)^{p/2}]^{1/p} dt,
\]
concluding the proof of Theorem~\ref{thm:WpGauss}.
\subsection{Multi-dimensional case: proof of Theorem~\ref{thm:WpGaussexch}}
\label{sec:WpGaussexch}
Unfortunately, it is not possible to use a multi-dimensional generalization of the random vector $\tau_t$ defined in Lemma~\ref{lem:tau1} as we would only be able to show that,
\[
\forall \test \in C^\infty_c(\RR^d, \RR), \EE[\left<\EE[\tau_t \mid F_t], \nabla \test(F_t)\right>] = 0
\]
which is not sufficient to assert that $\EE[\tau_t \mid F_t] = 0$. Instead, let us assume that the process $(X_t)_{t \geq 0}$ satisfies the assumptions of Theorem~\ref{thm:WpGaussexch}. 

\begin{lemma}
\label{lem:tau2}
Let $s,t > 0$.
The quantity 
\[
\tau_t = \left[\frac{e^{-t}}{s} (X_t-X_0) \left( 1 + \frac{1}{2} \sum_{k=1}^{\infty} \frac{1}{k! (e^{2t}-1)^{k/2}}\left< (X_t-X_0)^{\otimes k}, \mathcal{H}_{k}(Z)\right> \right) \mid X_0, Z \right]
\]
satisfies 
\[
\EE[\tau_t \mid F_t] = 0. 
\]
\end{lemma}

\begin{proof}
For any $\test \in C^\infty_c(\RR^d, \RR)$, we have
\[
\EE[\EE[\tau_t \mid F_t] \test(F_t)] = \EE[\tau_t \test(F_t)].
\]
Hence, by Lemma~\ref{lem:ippgauss}, 
\begin{multline*}
\EE[\EE[\tau_t \mid F_t] \test(F_t)] =\\
\frac{e^{-t}}{s}  \EE\left[(X_t-X_0) \left(\test(F_t) + \frac{1}{2} \sum_{k=1}^{\infty} \frac{e^{-kt}}{k!} \left<(X_t-X_0)^{\otimes k}, \nabla^k \test(F_t) \right> \right)\right].
\end{multline*}
Let $F'_t = e^{-t}X_t + \sqrt{1 - e^{-2t}} Z$. 
According to Lemma~\ref{lem:analytic}, we have 
\begin{align*}
s e^t \EE[\EE[\tau_t \mid F_t] \test(F_t)]  & = \EE\left[(X_t-X_0) \left(\test(F_t) + \frac{\test(F'_t) - \test(F_t)}{2}   \right)\right] \\
& = \frac{1}{2} \EE\left[(X_t-X_0) \left(\test(F_t) + \test(F'_t)   \right)\right].
\end{align*}
Finally, since the pairs $(X_0,X_t)$ and $(X_t,X_0)$ follow the same law, 
\[
\EE\left[ (X_t-X_0)(\test(F_t) + \test(F'_t))\right] = 0
\]
 and thus 
$\EE[\tau_t \mid F_t] = 0$ almost surely.
\end{proof}

Returning to the proof of Theorem~\ref{thm:WpGaussexch}, using the previous result along with 
Lemma~\ref{lem:score} and Jensen's inequality, we obtain 
\[
\EE[\|\rho_t\|^p]^{1/p} \leq \EE\left[\left\|e^{-t} X_0 + \frac{e^{-2t}}{\sqrt{1-e^{-2t}}} Z + \tau_t\right\|^p\right]^{1/p}.
\] 
Then, by Lemma~\ref{lem:hypercon}, 
\[
\EE[\|\rho_t\|^p]^{1/p} \leq e^{-t} \EE[\|S_p(t)\|^{p/2}]^{1/p}, 
\]
where
\begin{align*}
S_p(t)  & = \left\|\EE\left[\frac{X_t-X_0}{s}+X_0 \mid X_0\right]\right\|^2 \\
& +  \frac{\max(1, p-1)}{e^{2t}-1}  \left\|\EE\left[\frac{(X_t-X_0)^{\otimes 2}}{2 s} - I_d \mid X_0 \right]\right\|^2 \\
& +  \sum_{k=3}^{\infty}  \frac{\max(1, p-1)^{k-1}}{4 (s (k-1)!)^2 (e^{2t}-1)^{k-1}}   \left\|\EE[(X_t-X_0)^{\otimes k} \mid X_0] \right\|_H^2.
\end{align*} 
Finally, injecting this bound in Equation~(\ref{eq:OVp}) yields 
\[
W_p(\nu,\gamma) \leq \int_0^\infty e^{-t} \EE[S_p(t)^{p/2}]^{1/p} dt,
\]
concluding the proof.  

\section{Proof of Theorem \ref{thm:CTL}}
\label{subsec:CTLcomp}
Before starting the proof of Theorem~\ref{thm:CTL}, we first need to state a multi-dimensional version of the Rosenthal inequality. 
Let $k > 0, p \geq 2$ and suppose $Y_1,\dots,Y_n$ are independent random variables taking values in $(\RR^d)^{\otimes k}$. Then, by Theorem 2.1 \citep{acosta1981}, there exists $C_p > 0$ such that 
\[
\EE\left[\left|\left\|\sum_{i=1}^n Y_i\right\| - \EE[\|\sum_{i=1}^n Y_i\|]\right|^p\right]^{1/p} \leq C_p \left( \left(\sum_{i=1}^n \EE[\|Y_i\|^2]\right)^{1/2} + \left(\sum_{i=1}^n \EE[\|Y_i\|^p]^{1/p} \right)^{1/p} \right).
\]
Hence, using the triangle inequality and Jensen's inequality, we have 
\begin{align*}
\EE\left[\left\|\sum_{i=1}^n Y_i\right\|^p\right]^{1/p} & \leq \EE\left[\left|\left\|\sum_{i=1}^n Y_i\right\| - \EE[\|\sum_{i=1}^n Y_i\|]\right|^p\right]^{1/p} + \EE[\|\sum_{i=1}^n Y_i\|] \\
&\leq C_p \left(\EE[\|\sum_{i=1}^n Y_i\|] + \left(\sum_{i=1}^n \EE[\|Y_i\|^2]\right)^{1/2} + \left(\sum_{i=1}^n \EE[\|Y_i\|^p]^{1/p} \right)^{1/p} \right).
\end{align*}
Using Jensen's inequality, we have $\EE[\|\sum_{i=1}^n Y_i\|] \leq \EE[\|\sum_{i=1}^n Y_i\|^2]^{1/2}$. Furthermore, since $Y_1,\dots,Y_n$ are independent, we have
\begin{align*}
\EE[\|\sum_{i=1}^n Y_i\|] & \leq \EE[\|\sum_{i=1}^n Y_i\|^2]^{1/2} \\
& \leq \EE[\sum_{1 \leq i,j \leq n} <Y_i, Y_j>]^{1/2} \\
& \leq \left(\sum_{1 \leq i,j \leq n} \EE[<Y_i, Y_j>]\right)^{1/2} \\
& \leq \left(\sum_{1 \leq i,j \leq n, i\neq j} <\EE[Y_i], \EE[Y_j]> + \sum_{i=1}^n \EE[\|Y_i\|^2]\right)^{1/2} \\
& \leq \|\sum_{i = 1}^n \EE[Y_i]\| + \left(\sum_{i=1}^n \EE[\|Y_i\|^2]\right)^{1/2}.
\end{align*}
Therefore, there exists $C'_p > 0$ such that 
\begin{multline}
\label{eq:Rosenthal}
\EE\left[\left\|\sum_{i=1}^n Y_i\right\|^p\right]^{1/p} \leq C'_p\bigg( \|\sum_{i = 1}^n \EE[Y_i]\| + \left(\sum_{i=1}^n \EE[\|Y_i\|^2]\right)^{1/2} \\
+ \left(\sum_{i=1}^n \EE[\|Y_i\|^p]^{1/p} \right)^{1/p} \bigg).
\end{multline}

We are now ready to start the proof of Theorem \ref{thm:CTL}. 
Let $p\geq2$ and let $X_1,\dots,X_n$ be independent random variables such that $\sum_{i=1}^n \EE[\|X_i\|^{p+q}] < \infty$ for some $0\leq q \leq 2$ and let $S_n = \sum_{i=1}^n X_i$. Let $X'_1,\dots,X'_n$ be independent copies of $X_1, \dots, X_n$. For any $t > 0$, we pose $\alpha(t) = e^{2t} - 1$ and
\[
(S_{n})_t = S_n + n^{-1/2}(X'_I - X_I)1_{\|X'_I\|, \|X_I\| \leq \sqrt{n \alpha(t)}},
\]
where $I$ is a uniform random variable taking values in $\{1,\dots,n\}$. 
Let us first show that, for any $t > 0$, the pairs $(S_n, (S_n)_t)$ and $((S_n)_t, S_n)$ follow the same law. Since $X_1,\dots,X_n,X'_1,\dots,X'_n$ are independent, it is sufficient 
to show that $(X_I, X_I + (X'_I-X_I)1_{\|X'_I\|, \|X_I\| \leq \sqrt{n \alpha(t)}})$ and $(X_I + (X'_I-X_I)1_{\|X'_I\|, \|X_I\| \leq \sqrt{n \alpha(t)}}, X_I)$ follow the same law. 
Given $t > 0$ and $E,F$ two open subsets of $\RR^d$, we have 
\begin{multline*} 
\PP(X_I \in E, X_I + (X'_I-X_I)1_{\|X'_I\|, \|X_I\| \leq \sqrt{n \alpha(t)}} \in F) = \\
 \PP(X_I \in E, X_I \in F \mid \max(\|X_I\|, \|X'_I\|) > \sqrt{n \alpha(t)}) \PP(\max(\|X_I\|, \|X'_I\|) > \sqrt{n \alpha(t)}) \\
 + \PP(X_I \in E, X'_I \in F \mid \max(\|X_I\|, \|X'_I\|) \leq \sqrt{n \alpha(t)}) \PP(\max(\|X_I\|, \|X'_I\|) \leq \sqrt{n \alpha(t)}) \\
\end{multline*}
and, since $X_I$ and $X'_I$ are independent and identically distributed, 
\begin{align*}
\PP(& X_I \in E, X_I + (X'_I-X_I)1_{\|X'_I\|, \|X_I\|  \leq \sqrt{n \alpha(t)}} \in F)  \\
 & = \PP(X_I \in E, X_I \in F \mid \max(\|X_I\|, \|X'_I\|) > \sqrt{n \alpha(t)}) \PP(\max(\|X_I\|, \|X'_I\|) > \sqrt{n \alpha(t)}) \\
& \qquad + \PP(X'_I \in E, X_I \in F \mid \max(\|X_I\|, \|X'_I\|) \leq \sqrt{n \alpha(t)}) \PP(\max(\|X_I\|, \|X'_I\|) \leq \sqrt{n \alpha(t)}) \\
& = \PP(X_I + (X'_I-X_I)1_{\|X'_I\|, \|X_I\| \leq \sqrt{n \alpha(t)}} \in E, X_I  \in F).
\end{align*}
Moreover, Equation~\ref{eq:condition} is verified since $\|(S_{n})_t - S_n\| \leq 2 \sqrt{n \alpha(t)}$ for any $t \geq 0$. 
Hence, we can apply Theorem~\ref{thm:WpGaussexch} to the measure $\nu_n$ of $S_n$ using the stochastic process $((S_n)_t)_{t \geq 0}$ and taking $s = \frac{1}{n}$ and apply the triangle inequality to obtain 
\[
W_p(\nu_n, \gamma) \leq \sum_{k=1}^\infty I_k, 
\]
where 
\begin{itemize}
\item $I_1 = \int_0^\infty e^{-t} \EE[\|\EE[n((S_n)_t - S_n)   + S_n\mid S_n]\|^p]^{1/p} dt$;
\item $I_2 = \int_0^\infty \frac{e^{-t} \sqrt{p-1}}{\sqrt{e^{2t}- 1}} \EE\left[\left\|\EE\left[n\frac{((S_n)_t - S_n)^{\otimes 2}}{2}  - I_d \mid S_n\right] \right\|^p\right]^{1/p} dt$;
\item $I_k = \frac{(p-1)^{(k-1)/2}}{2 (k-1)!} \int_0^\infty \frac{e^{-t}}{\sqrt{e^{2t}- 1}^{k-1}} \EE[\|\EE[n((S_n)_t - S_n)^{\otimes k} \mid S_n]\|_H^p]^{1/p} dt$.
\end{itemize}
Let $m, m' \in (0, \min(p+q,4) - 2]$. 
In order to conclude the proof all we have to do is to show there exists $C_p > 0$ depending only on $p$ such that $I_1, I_2$ and $ \sum_{k=3}^\infty I_k$
are bounded by 
\[
C_p n^{-1/2} ( n^{- q/2p} U_p + n^{-m/4} U_2 + U_1),
\]
where
\[
U_1 = \sum_{i=1}^n \begin{cases}
n^{-1/2 -m'/2} |1-m'|^{-1} \|\EE[X_i X_i^T \|X_i\|^{m'}]\| \text{ if $m' < 1$} \\
 n^{-1} \|\EE[X_i X_i^T \|X_i\|]\| \log(n) \text{ if $m' = 1$} \\
 n^{-1} |1-m'|^{-1} d^{1/2 - 1/(2m')} \|\EE[X_i X_i^T \|X_i\|^{m'}]\|^{1/m'}    \text{ if $m' > 1$}
 \end{cases}
\]
and
\[
U_2 = \left(\sum_{i=1}^n \EE[\|X_i\|^{2+m}] \right)^{1/2}
\]
and
\[
U_p =  \left(\sum_{i=1}^n \EE[\|X_i\|^{p+q}] \right)^{1/p}.
\]

In the remainder of this proof, we denote by $C$ a generic constant and by $C_p$ a generic constant depending only on $p$.
For any $t \geq 0$, we have, by definition of $(S_n)_t$,
\[
(S_n)_t - S_n = \frac{1}{\sqrt{n}} (X'_I - X_I) 1_{\|X_I\|, \|X_I'\| \leq \sqrt{n\alpha(t)}}
\]
and, since $I$ and $S_n$ are independent,
\[
\label{eq:prerosenthal}
\EE[n((S_n)_t - S_n)^{\otimes k} \mid S_n] =  n^{-k/2} \EE\left[\sum_{i=1}^n (X'_i - X_i)^{\otimes k} 1_{\|X_i\|, \|X_i'\| \leq \sqrt{n\alpha(t)}} \mid S_n\right]
\]
for any $k \in \NN$. 

\subsection*{Bounding $I_1$}

Taking $k = 1$ in Equation~(\ref{eq:prerosenthal}), we have 
\[
\EE[n((S_n)_t - S_n) + S_n \mid S_n]  
 =   \frac{1}{\sqrt{n}} \EE\left[\sum_{i=1}^n (X'_i - X_i) 1_{\|X_i\|, \|X_i'\| \leq \sqrt{n\alpha(t)}} + X_i  \mid S_n\right].
\]
Let $i \in \{1,\dots,n\}$. Since $X_i'$ is independent from $S_n$, $\EE[X_i' \mid S_n] = \EE[X_i'] = 0$. Hence, 
\begin{align*}
\EE[X_i' 1_{\|X_i\|, \|X_i'\| \leq \sqrt{n\alpha(t)}} \mid S_n] & = \EE[X_i' (1-1_{\max \|X_i\|, \|X_i'\| \geq \sqrt{n\alpha(t)}}) \mid S_n] \\
& =  -\EE[X_i' 1_{\max \|X_i\|, \|X_i'\| \geq \sqrt{n\alpha(t)}} \mid S_n].
\end{align*}
Therefore
\[
\EE[(X_i' - X_i) 1_{\|X_i\|, \|X_i'\| \leq \sqrt{n\alpha(t)}} + X_i \mid S_n]  = \EE[(X_i - X_i') 1_{\max \|X\|, \|X'\| \geq \sqrt{n\alpha(t)}} \mid S_n]
\]
and
\begin{multline*}
\EE[\|\EE[n((S_n)_t - S_n)  + S_n \mid S_n] \|^p]^{1/p} =  \\
n^{-1/2} \EE\left[\left\|\EE\left[\sum_{i=1}^n  (X_i - X_i') 1_{\max \|X_i\|, \|X_i'\| \geq \sqrt{n\alpha(t)}} \mid S_n \right] \right\|^p \right]^{1/p}.
\end{multline*}
Then, by applying Jensen's inequality to get rid of the conditional expectation, we obtain 
\begin{multline*}
\EE[\|\EE[n((S_n)_t - S_n) + S_n\mid S_n] \|^p]^{1/p} \leq \\
n^{-1/2} \EE\left[\left\|\sum_{i=1}^n (X_i - X_i') 1_{\max \|X_i\|, \|X_i'\| \geq \sqrt{n\alpha(t)}}\right\|^p\right]^{1/p}.
\end{multline*}
Now, let us pose 
\[
Y_i = (X_i - X_i') 1_{\max \|X\|, \|X'\| \geq \sqrt{n\alpha(t)}}.
\]
Since the $(X_i)_{1 \leq i \leq n}$ and the $(X'_i)_{1 \leq i \leq n}$ are independent and identically distributed  random variables, so are the $((X_i - X'_i)1_{\max \|X_i\|, \|X_i'\| \geq \sqrt{n\alpha(t)}})_{1\leq i \leq n}$. 
Hence, we can use Equation~(\ref{eq:Rosenthal}) to obtain 
\begin{multline}
\label{eq:I11}
\EE[\|\EE[n((S_n)_t - S_n) + S_n \mid S_n] \|^p]^{1/p}  \leq \\
C_p n^{-1/2} \left(\|\sum_{i=1}^n \EE[Y_i]\| + \left(\sum_{i=1}^n \EE[\|Y_i\|^2] \right)^{1/2} + \left(\sum_{i=1}^n \EE[ \|Y\|^p] \right)^{1/p} \right).
\end{multline}
Let $i \in \{1,\dots,d\}$ and let us pose $Y = Y_i$, $X = X_i$ and $X' = X'_i$. 
First, since $X$ and $X'$ follow the same law, 
\beq 
\label{eq:I14}
\EE[Y] = 0.
\eeq 
On the other hand, we have
\begin{align*}
\EE[\|Y\|^p] & = \EE[\|X - X'\|^p 1_{\max \|X\|, \|X'\| \geq \sqrt{n\alpha(t)}}] \\ 
& \leq \EE[(\|X\| + \|X'\|)^p 1_{\max \|X\|, \|X'\| \geq \sqrt{n\alpha(t)}}] \\
& \leq 2^p \EE[\max(\|X\|, \|X'\|)^p 1_{\max \|X\|, \|X'\| \geq \sqrt{n\alpha(t)}}] \\
& \leq 2^p (n\alpha(t))^{-q/2}\EE[\max(\|X\|, \|X'\|)^{p+q}] \\
& \leq  2^p (n\alpha(t))^{-q/2}\EE[\|X\|^{p+q} + \|X'\|^{p+q}]
\end{align*}
and thus 
\[
\EE[\|Y\|^p] \leq 4^p (n\alpha(t))^{-q/2} \EE[\|X\|^{p+q}].
\]
Then, since $\alpha(t) \geq 2t$, 
\begin{align*}
\int_0^\infty e^{-t} \left(\sum_{i=1}^n \EE[\|Y_i\|^p]\right)^{1/p} dt & \leq 4 n^{-q/2p} U_p  \int_0^\infty e^{-t} \alpha(t)^{-q/2p} dt \\
& \leq 2^{2 - q/2p} \int_0^\infty e^{-t} t^{-q/2p} dt  
\end{align*}
and 
\beq
\label{eq:I1p}
\int_0^\infty e^{-t} \left(\sum_{i=1}^n \EE[\|Y_i\|^p]\right)^{1/p} dt \leq C n^{-q/2p} U_p.
\eeq
Replacing $p$ by $2$ in the previous computations, we also obtain 
\beq
\label{eq:I12}
\int_0^\infty e^{-t} \left(\sum_{i=1}^n \EE[\|Y_i\|^2]\right)^{1/2} dt \leq C n^{-m/4} U_2.
\eeq
Finally, combining Equations~(\ref{eq:I11}),~(\ref{eq:I14}), (\ref{eq:I1p}) and (\ref{eq:I12}), we obtain 
\begin{align*}
I_1 & = \int_0^\infty e^{-t} \EE[\|\EE[n((S_n)_t - S_n) + S_n \mid S_n] \|_p^p]^{1/p} dt \\
& \leq C_p n^{-1/2} \left(n^{-m/4} U_2 + n^{-q/2p} U_p \right).
\end{align*}

\subsection*{Bounding $I_2$}

Taking $k = 2$ in Equation~(\ref{eq:prerosenthal}) gives
\begin{align*}
\EE\left[\frac{n((S_n)_t - S_n)^{\otimes 2}}{2}   - I_d  \mid S_n\right]&= \EE\left[ \sum_{i=1}^n \frac{(X'_i - X_i)^{\otimes 2}}{2 n} 1_{\|X_I\|, \|X_I'\| \leq \sqrt{n\alpha(t)}} - I_d \mid S_n\right] \\
& =  \frac{1}{n} \EE\left[\sum_{i = 1}^n \left(\frac{(X'_i - X_i)^{\otimes 2}}{2} 1_{\|X_i\|, \|X_i'\| \leq \sqrt{n\alpha(t)}} - I_d \right) \mid S_n\right].
\end{align*}
Again, taking
\[
Y_i = \left(\frac{(X'_i - X_i)^{\otimes 2}}{2} 1_{\|X_i\|, \|X'_i\| \leq \sqrt{n\alpha(t)}} - I_d\right)
\]
and using a combination of Jensen's inequality and Equation~(\ref{eq:Rosenthal}), we obtain 
\begin{multline}
\label{eq:I21}
\EE  \left[\left\|\EE\left[n \frac{((S_n)_t - S_n)^{\otimes 2}}{2}  - I_d \mid S_n\right]\right\|^p\right]^{1/p} \leq \\
 C_p n^{-1} \left(\left\|\sum_{i=1}^n \EE\left[Y _i\right]\right\| + \left( \sum_{i=1}^n \EE\left[\|Y_i\|^2\right]\right)^{1/2} + \left(\sum_{i=1}^n \EE\left[\|Y_i\|^p\right] \right)^{1/p} \right).
\end{multline}
Let $i \in \{1,\dots,d\}$ and let us pose $Y = Y_i$, $X = X_i$ and $X' = X'_i$. 
Let us start by bounding $\|\EE[Y]\|$. Since 
$\EE\left[X^{\otimes 2}\right] = \EE\left[X'^{\otimes 2}\right] = I_d$ and $\EE[X' \otimes X] = \EE[X']\otimes \EE[X] = 0$, we have 
\[
\EE\left[\frac{(X'-X)^{\otimes 2}}{2}\right] = I_d.
\]
 Therefore 
\[
\EE\left[Y\right] = \EE\left[\frac{(X' - X)^{\otimes 2}}{2} 1_{\max \|X\|, \|X'\| \geq \sqrt{n\alpha(t)}}\right].
\]
Let $0 \leq l \leq m'$. Letting $Z$ and $Z'$ be two random variables such that $X$, $X'$, $Z$, $Z'$ are independent and identically distributed  and denoting by $C$ a generic positive constant, we have 
\begin{align*}
\|\EE\left[Y \right] \| &  = \left<\EE\left[(X'-X)^{\otimes 2} 1_{\max \|X\|, \|X'\| \geq \sqrt{n \alpha(t)}}\right], \EE\left[(Z'-Z)^{\otimes 2} 1_{\max \|Z\|, \|Z'\| \geq \sqrt{n \alpha(t)}}\right]\right>^{1/2} \\
& =  \EE\left[\left<(X'-X)^{\otimes 2}, (Z'-Z)^{\otimes 2}\right>  1_{\max \|X\|, \|X'\| \geq \sqrt{n \alpha(t)}} 1_{\max \|Z\|, \|Z'\| \geq \sqrt{n \alpha(t)}}\right]^{1/2} \\
& =  \EE\left[\left<X'-X, Z'-Z\right>^2 1_{\max \|X\|, \|X'\| \geq \sqrt{n\alpha(t)}} 1_{\max \|Z\|,\|Z'\| \geq \sqrt{n \alpha(t)}}\right]^{1/2} \\
& \leq C \EE\left[<X, Z>^2 1_{\max \|X\|, \|X'\| \geq \sqrt{n\alpha(t)}} 1_{\max \|Z\|,\|Z'\| \geq \sqrt{n \alpha(t)}}\right]^{1/2} \\
& \leq C(n \alpha(t))^{-l/2} \EE\left[<X, Z>^2 \max(\|X\|,\|X'\|)^l  \max(\|Z\|,\|Z'\|)^l\right]^{1/2} \\
& \leq C(n \alpha(t))^{-l/2} \EE\left[<X, Z>^2 (\|X\|^l+\|X'\|^l)(\|Z\|^l +\|Z'\|^l)\right]^{1/2} \\
& \leq C(n \alpha(t))^{-l/2} \EE\left[<X^{\otimes 2}, Z^{\otimes 2}> (\|X\|^l+\|X'\|^l)(\|Z\|^l +\|Z'\|^l)\right]^{1/2} \\
& \leq  C(n \alpha(t))^{-l/2} \EE\left[<X^{\otimes 2}(\|X\|^l+\|X'\|^l), Z^{\otimes 2}(\|Z\|^l +\|Z'\|^l)> \right]^{1/2} \\
& \leq C (n \alpha(t))^{-l/2} \|\EE[X^{\otimes 2} (\|X\|^l + \|X'\|^l)]\| \\
& \leq C (n \alpha(t))^{-l/2}  (\|\EE[X^{\otimes 2} \|X\|^l]\| + \|\EE[X^{\otimes 2}\|X'\|^l]\|).
\end{align*}
Moreover, since $X$ and $X'$ are independent and identically distributed, 
\begin{align*}
\|\EE[X^{\otimes 2}\|X'\|^l]\| & = d^{1/2} \EE[\|X'\|^l] \\
& = d^{1/2}\EE[\|X\|^l] \\
& \leq d^{-1/2} \EE[\|X\|^2] \EE[\|X\|^l] \\
& \leq d^{-1/2} \EE[\|X\|^{2+l}]^{2/(2+l)} \EE[\|X\|^{2+l}]^{m/(2+l)} \\
&  \leq d^{-1/2} \EE[\|X\|^{2+l}] \\
& \leq d^{-1/2} \sum_{i=1}^d \EE[X_i^2 \|X\|^l] \\
& \leq \sqrt{\sum_{i=1}^d \EE[X_i^2 \|X\|^l]^2}  \\
& \leq \sqrt{\sum_{i,j=1}^d \EE[X_i X_j \|X\|^l]^2} \\
& \leq \|\EE[X^{\otimes 2} \|X\|^l]\|.
\end{align*}
Therefore, there exists $C > 0$ such that, for any $0 \leq l \leq m'$, 
\beq
\label{eq:auxI21}
\|\EE\left[Y \right] \|  \leq C (n \alpha(t))^{-l/2} \|\EE[X^{\otimes 2} \|X\|^l]\| . 
\eeq
Let $0 < t_0 < 1$. Using Equation~(\ref{eq:auxI21}) with $l = 0$ and $ l = m'$, we have  
\begin{multline*}
\int_0^\infty \frac{e^{-t}}{\sqrt{e^{2t} - 1}}   \|\EE\left[Y \right] \| dt     \\
\leq C \left( \int_0^{t_0} \frac{ \sqrt{d} e^{-t}}{\sqrt{\alpha(t)}} dt  +  n^{-m'/2} \|\EE[X^{\otimes 2} \|X\|^{m'}]\| \int_{t_0}^\infty  \frac{e^{-t}}{\alpha(t)^{(m'+1)/2}} dt \right) \\
\end{multline*}
and, since $\alpha(t) \leq 2t$, 
\begin{multline}
\label{eq:integ}
\int_0^\infty \frac{e^{-t}}{\sqrt{e^{2t} - 1}}  \|\EE\left[Y \right] \| dt \leq \\ 
C \left( \sqrt{d t_0}  +  n^{-m'/2} \|\EE[X^{\otimes 2} \|X\|^{m'}]\| \left(\int_{t_0}^1  \frac{1}{(2t)^{(m'+1)/2}} dt + 1 \right)\right).
\end{multline}
Thus, there exists $C'> 0$ such that, if $m' = 1$,
\[
\int_{t_0}^1  \frac{1}{t^{(m'+1)/2}} dt \leq C' \log(t_0)
\]
and, if $m' \neq 1$, 
\[
\int_{t_0}^1  \frac{1}{t^{(m'+1)/2}} dt \leq C' \frac{|1 - t_0^{(1-m')/2}|}{|1-m'|}.
\]
Hence, taking $t_0 = 0$ if $m' < 1$, $1/n$ if $m' = 1$ and $\frac{\|\EE[X^{\otimes 2} \|X\|^{m'}]\|^{2/m'}}{d^{1/m'} n}$ if $m' > 1$, we obtain that 
\begin{multline*}
\int_0^\infty \frac{e^{-t}}{\sqrt{e^{2t}-1}}  \|\EE\left[Y \right] \| dt  \leq  \\
C  \begin{cases}
 n^{-m'/2} |1-m'|^{-1} \|\EE[X^{\otimes 2} \|X\|^{m'}]\|   \text{ if $m' < 1$} \\
 n^{-1/2} \|\EE[X^{\otimes 2} \|X\|]\| \log(n) \text{ if $m' = 1$} \\
  n^{-1/2} |1-m'|^{-1} \|\EE[X^{\otimes 2} \|X\|^{m'}]\|^{1/m'} d^{1/2 - 1/(2m')}  \text{ if $m' > 1$}
 \end{cases}. 
\end{multline*}
Therefore
\beq 
\label{eq:I22}
\int_0^\infty \frac{e^{-t}}{\sqrt{e^{2t}-1}} \|\sum_{i=1}^n \EE[Y_i]\| dt \leq C n^{1/2} U_1. 
\eeq
Let us now deal with the higher moments of $Y$. We have 
\begin{align*}
\EE[\|Y\|^p]^{1/p} & \leq \EE\left[\left\|\frac{(X' - X)^{\otimes 2}}{2} 1_{\|X'\|, \|X\| \leq \sqrt{n\alpha(t)}}\right\|^p\right]^{1/p} + \|I_d\|  \\
& \leq \frac{1}{2} \EE[\|X' - X\|^{2p}  1_{\|X'\|, \|X\| \leq \sqrt{n\alpha(t)}}]^{1/p} + d^{1/2}  \\
& \leq \EE[\|X\|^{2p}  1_{\|X'\|, \|X\| \leq \sqrt{n\alpha(t)}}]^{1/p} + \EE[\|X\|^2]^{1/2}\\
& \leq \EE[\|X\|^{2p} 1_{\|X\| \leq \sqrt{n\alpha(t)}}]^{1/p} + \EE[\|X\|^p]^{1/p} 
\end{align*}
leading to 
\[
\EE[\|Y\|^p]^{1/p} \leq n^{1/2-q/2p} (\alpha(t)^{1/2-q/2p} + 1) \EE[\|X\|^{p+q}]^{1/p}
\]
and, since $\alpha(t) \geq 2t$, 
\beq
\label{eq:I23}
\int_0^\infty \frac{e^{-t}}{\sqrt{e^{2t} - 1}} \left(\sum_{i=1}^p \EE[\|Y_i\|^p]^{1/p} \right) dt \leq C n^{1/2-q/2p} U_p.
\eeq
And, by similar computations,
\beq
\label{eq:I24}
\int_0^\infty \frac{e^{-t}}{\sqrt{e^{2t} - 1}} \left(\sum_{i=1}^p \EE[\|Y_i\|^2]^{1/2} \right) dt \leq C n^{1/2-m/4} U_2.
\eeq
Combining Equations~(\ref{eq:I21}),(\ref{eq:I22}),(\ref{eq:I23}) and (\ref{eq:I24}), we have 
\begin{align*}
I_2 & =   \int_0^\infty \frac{e^{-t} \sqrt{p-1}}{\sqrt{e^{2t}- 1}} \EE\left[\left\|\EE\left[n\frac{((S_n)_t - S_n)^{\otimes 2}}{2}  - I_d \mid S_n\right] \right\|^p\right]^{1/p} dt \\
 & \leq  C_p n^{-1/2} \left(U_1 +  n^{-m/4} U_2 + n^{-q/2p} U_p\right).
\end{align*}
\subsection*{Bounding $\sum_{k=3}^\infty I_k$}
Consider some $k>2$. 
Let us first remark that 
\beq
\label{eq:Hnorm}
\forall M \in (\RR^d)^{\otimes k}, \|M\|_H \leq \max_{i \in \{1,\dots,d\}^{k-1}} \|H_i\|_\gamma \|M\| \leq \sqrt{(k-1)!} \|M\|.
\eeq
Now, let 
\[
 Y_i = \EE[(X_i' - X_i)^{\otimes k}  1_{\|X\|, \|X'\| \leq \sqrt{n\alpha(t)}} \mid S_n].
\]
Combining Equation~(\ref{eq:prerosenthal}), Jensen's inequality to get rid of the conditional expectation and Equation~(\ref{eq:Rosenthal}), we obtain 
\begin{multline*}
\label{eq:I31}
\EE[\|\EE[n ((S_n)_t - S_n)^{\otimes k} \mid S_n]\|^p]^{1/p} \leq \\
C_p n^{-k/2} \left(\| \sum_{i=1}^n \EE[Y_i]\| +  \left( \sum_{i=1}^n \EE[\|Y_i\|^2] \right)^{1/2} + \left(\sum_{i=1}^n \EE[\|Y_i\|^p]\right)^{1/p} \right).
\end{multline*}
Let $i \in \{1,\dots,d\}$ and let us pose $Y= Y_i$, $X = X_i$ and $X' = X'_i$. 
First, we have 
\begin{align*}
\EE[\|Y\|^p] & \leq \EE[\|X' - X\|^{kp}  1_{\|X\|, \|X'\| \leq \sqrt{n\alpha(t)}}] \\
&\leq 2^{kp} \EE[\|X\|^{kp} 1_{\|X\| \leq \sqrt{n\alpha(t)}}] \\
& \leq 2^{kp} (n\alpha(t))^{((k-1)p-q)/2} \EE[\|X\|^{p+q}].
\end{align*}
Thus, since $\alpha(t) \geq 2t$, 
\beq
\label{eq:I32}
\int_0^\infty \frac{e^{-kt}}{\sqrt{1-e^{-2t}}^{k-1}} \left(\sum_{i=1}^n \EE[\|Y_i\|^p]\right)^{1/p} dt \leq C 2^{k} n^{(k-1)/2-q/2p} U_p
\eeq
and, similarly, 
\beq
\label{eq:I33}
\int_0^\infty \frac{e^{-kt}}{\sqrt{1-e^{-2t}}^{k-1}} \left(\sum_{i=1}^n \EE[\|Y_i\|^2]\right)^{1/2} dt \leq C 2^{k} n^{(k-1)/2-m/4} U_2. 
\eeq
Let us now bound $\|\EE[Y]\|$. Since $X'$ and $X$ are independent and identically distributed, $\EE[Y]= 0$ for odd values of $k$. Let us now consider an even integer $k > 2$ and let $0 \leq l \leq m'$. 
Let us denote by $Z$ and $Z'$ two random variables such that $X,X', Z,Z'$ are independent and identically distributed. We have 
\begin{align*}
\|\EE[Y]\| & = \EE\left[<X'-X,Z'-Z>^k 1_{\|X\|,\|X'\| \leq \sqrt{n \alpha(t)}} 1_{\|Z\|,\|Z'\| \leq \sqrt{n \alpha(t)}}\right]^{1/2} \\
& \leq 2^k \EE\left[<X,Z>^k 1_{\|X\|,\|Z\| \leq \sqrt{n \alpha(t)}} \right]^{1/2} \\
& \leq 2^k\EE\left[<X,Z>^2 \|X\|^{k-2} \|Z\|^{k-2} 1_{\|X\|, \|Z\| \leq \sqrt{n \alpha(t)}}\right]^{1/2} \\
& \leq 2^k(n\alpha(t))^{ (k - l - 2)/2}\EE\left[<X,Z>^2 \|X\|^l \|Z\|^l \right]^{1/2} \\
& \leq 2^k(n\alpha(t))^{ (k - l - 2)/2} \|\EE[X^{\otimes 2} \|X\|^l]\|,
\end{align*}
and thus 
\[
n^{-k/2} \frac{e^{-t}}{\sqrt{e^{2t}-1}^{(k-1)/2}}  \|\EE[Y]\|  \leq 2^k  e^{-t} n^{-l/2 - 1} \alpha(t)^{-(l+1)/2} \|\EE[X^{\otimes 2} \|X\|^l]\|.
\]
Then, using the same integration procedure we used to derive Equation~(\ref{eq:I22}), we obtain that 
\[
\label{eq:I34}
n^{-k/2} \int_0^\infty \frac{e^{-kt}}{\sqrt{1-e^{-2t}}^{k-1}}  \|\sum_{i=1}^n \EE[Y_i]\| dt \leq C n^{-1/2} U_1.
\]
Combining Equations~(\ref{eq:Hnorm}), (\ref{eq:I32}),(\ref{eq:I33}) and (\ref{eq:I34}), we finally obtain 
\begin{align*}
I_k & = \frac{(p-1)^{(k-1)/2}}{2 (k-1)!} \int_0^\infty \frac{e^{-kt}}{\sqrt{1-e^{-2t}}^{k-1}} \EE[\|\EE[n((S_n)_t - S_n)^{\otimes k} \mid S_n]\|_H^p]^{1/p} dt \\
& \leq \frac{(p-1)^{(k-1)/2}}{2 \sqrt{(k-1)!}} \int_0^\infty \frac{e^{-kt}}{\sqrt{1-e^{-2t}}^{k-1}} \EE[\|\EE[n((S_n)_t - S_n)^{\otimes k} \mid S_n]\|^p]^{1/p} dt \\
& \leq C_p n^{-1/2} \frac{ 2^{k-1} (p-1)^{(k-1)/2}}{\sqrt{(k-1)!}} \left( U_1 + n^{-m/4} U_2 + n^{-q/2p} U_p \right). 
 \end{align*}
Finally, since $\sum_{k=3}^\infty \frac{2^k (p-1)^{k/2}}{\sqrt{(k-1)!}} < \infty$, 
\[
\sum_{k=3}^\infty I_k \leq  C_p n^{-1/2} \left( U_1 + n^{-m/4} U_2 + n^{-q/2p} U_p \right). 
\]

\section{Proof of Corollary~\ref{cor:rw}}
\label{sec:corrw}
Suppose Assumption~\ref{ass:main} is verified. 
For $t < \tau$, let 
\[
S(t) = f_1(t) \|b(X)\|_{a^{-1}(X)} + f_2(t) \sqrt{d}
\]
and, for $t \geq \tau$, let 
\begin{align*}
S(t) = & f_1(t) \left\| \frac{1}{\step} \int_{y \in \RR^d} (y-X) K(X,dy)  - b(X) \right\|_{a^{-1} (X)} \\
& + f_2(t) \left\|  \frac{1}{\step} \int_{y \in \RR^d} \frac{(y-X)^{\otimes2}}{2} K(X,dy)  - a(X) \right\|_{a^{-1}(X)} \\
& + \sum_{k=3}^\infty \frac{f_k(t)}{s k!} \left\|\int_{y \in \RR^d} (y-X)^{\otimes k} K(X,dy)\right\|_{a^{-1}(X)}
\end{align*}

By Equation~(\ref{eq:fkbound}), there exists $C> 0$ such that 
\[
\forall k = 1,2, \int_{0}^\tau f_k(t) dt \leq C^k \frac{(d (k-1))^{(k-1)/2}}{(k-1)} \tau^{(3-k)/2}
\]
and
\[
\forall k \geq 1, \int_{\tau}^T f_k(t) dt \leq C^k \frac{(d (k-1))^{(k-1)/2}}{(k-1) \tau^{(k-3)/2}}.
\]
Therefore, by the triangle inequality, 
\[
\int_0^\tau \EE[S(t)^2]^{1/2} dt \leq C \tau \EE[\|b(X)\|^2_{a^{-1}(X)}]^{1/2} + C^2 d \sqrt{\tau}
\]
and
\begin{align*}
\int_\tau^T \EE[S(t)^2]^{1/2} dt \leq & C T \EE\left[\left\| \frac{1}{\step} \int_{y \in \RR^d} (y-X) K(X,dy)  - b(X) \right\|^2_{a^{-1} (X)}\right]^{1/2} \\
& + 2 C^2 \sqrt{T d }  \EE\left[\left\| \frac{1}{\step} \int_{y \in \RR^d} \frac{(y-X)^{\otimes2}}{2} K(X,dy)  - a(X) \right\|^2_{a^{-1}(X)}\right]^{1/2} \\
& + C^3 \frac{\log(\tau) d}{3 \step}   \EE\left[\left\|  \int_{y \in \RR^d} (y-X)^{\otimes 3} K(X,dy)\right\|^2_{a^{-1}(X)}\right]^{1/2} \\
& + \sum_{k=4}^\infty C^k \frac{2 (d (k-1))^{(k-1)/2}}{(k-1) k! \tau^{(k-3)/2}  \step} \EE\left[\left\|  \int_{y \in \RR^d} (y-X)^{\otimes k} K(X,dy)\right\|^2_{a^{-1}(X)}\right]^{1/2}.
\end{align*}
Finally, applying Theorem~\ref{thm:main2} using the stochastic process given by Equation~(\ref{eq:rwsto}), we obtain 
\[
(1-c e^{-\kappa T}) W_2(\pi,\mu) \leq \int_0^T \EE[S(t)^2]^{1/2} dt,
\]
completing the proof of Corollary~\ref{cor:rw}. 
\section{Proof of Proposition~\ref{pro:randomgraph}}
\label{subsec:knn}
Let $\mathcal{T} = (\RR/\ZZ)^d$  be the $d$-dimensional flat torus and let $\mu$ be a measure supported on $\mathcal{T}$ with strictly positive density $f \in C^\infty(\mathcal{T},\RR)$. 
While $\mathcal{T}$ is not an open set of $\RR^d$, the arguments used in the proof of Theorem~\ref{thm:main2} still hold. 
Let $\tilde{\mu}$ be the measure with density $\tilde{f} = C f^{2+2/d}$, where $C > 0$ is a renormalization factor. Let us denote the Lebesgues measure by $\lambda$ and let $\nabla .$ be the divergence operator. For any two functions $\phi, \psi \in C^\infty(\mathcal{T}, \RR)$, we have, using an integration by parts with respect to the Lebesgues measure, 
\begin{align*}
\int_T \phi f^{-2/d} (\nabla \log f . \nabla \psi & + \frac{1}{2} \Delta \psi ) d\tilde{\mu} \\
& = C \int_T \phi  (\nabla \log f . \nabla \psi  + \frac{1}{2} \Delta \psi ) f^2 d \lambda \\
& = \frac{C}{2} \int_T \phi  (\nabla \log f^2 . \nabla \psi  + \Delta \psi ) f^2 d \lambda \\
& = \frac{C}{2} \int_T \phi  (\nabla f^2 . \nabla \psi  + f^2 \Delta \psi )  d \lambda \\
& = \frac{C}{2} \int_T \phi  \nabla .(f^2 \nabla \psi) d \lambda \\
& = - \frac{C}{2} \int_T f^2 \nabla \phi  . \nabla \psi d \lambda \\
& = \frac{C}{2} \int_T \psi  \nabla .(f^2 \nabla \phi) d \lambda \\
& = \frac{C}{2} \int_T \psi  (\nabla \log f^2 . \nabla \phi  + \Delta \phi ) f^2 d \lambda \\
& = \int_T \psi f^{-2/d} (\nabla \log f . \nabla \phi  + \frac{1}{2}\Delta \phi ) d\tilde{\mu}.
\end{align*}
The measure $\tilde{\mu}$ is thus the reversible probability measure of the operator 
\[
\mathcal{L}_{\tilde{\mu}} = f^{-2/d}  (\nabla \log f . \nabla  + \frac{\Delta}{2}).
\] 
As $\mathcal{T}$ is compact and $f$ has bounded derivatives of all orders and is strictly positive, $f^{-2/d} \nabla \log f$ and $f^{-2/d} I_d$ admit bounded derivatives of all orders and $f^{-2/d} I_d$ is strictly positive-definite on all of $\mathcal{T}$ and thus items $(i)$ to $(v)$ of Assumption~\ref{ass:main} are verified. 
Finally, thanks to Corollary 2.2 \cite{Fwang}, the Markov semigroup associated to $\mathcal{L}_{\tilde{\mu}}$ verifies property $(vi)$ of Assumption~\ref{ass:main}. 

Let $k,n \in \NN$ such that $k < n$, let $(X_1, \dots, X_n)$ be independent random variables drawn from the measure $\mu$ and let $\pi_{k,n}$ be the invariant measure of the random walk on the $k$-nearest neighbor graphs built on the point cloud $(X_1,\dots,X_n)$. For $x \in \mathcal{T}$ and $r > 0$, we denote by $B(x,r)$ the ball of radius $r$ centred in $x$. Finally, let us  pose 
\[
\forall x \in \mathcal{T}, \radius_{\mathcal{X}_n}(x) = \inf \left\{s \in \RR^+ | \sum_{i =1}^n 1_{\|X_i - x\| \leq s} \geq k \right\}.
\]
In the remainder of this Section, we denote by $C$ a generic constant depending only on $f$ and $d$. 
Since Assumption~\ref{ass:main} is verified and since $\mathcal{T}$ is compact, we can apply Corollary~\ref{cor:rw} to the measures $\pi_{k,n}$ and $\tilde{\mu}$ with $T = 1$ and 
\[
\tau = \step = \left(\frac{k}{n} \right)^{2/d} \frac{\int_{\|x\|\leq 1} x_1^2 dx}{\left(\int_{\|x\|\leq 1} 1 dx\right)^{1+2/d}},
\]
we obtain that 
\begin{multline}
\label{eq:mainknn}
W_{2,a} (\pi_{k,n}, \tilde{\mu})  \leq C (s + \sqrt{s} + I_1 + I_2 + s^{-1} \log(s) I_3)  \\
+ \sum_{m=4}^\infty \frac{C^m (m-1)^{(m-1)/2}}{m! s^{(m-1)/2}} I_m,
\end{multline}
where
\begin{itemize}
\item $I_1 = \sup_{i \in \{1,\dots,n\}} \left\|\frac{1}{k \step} \sum_{X_j \in \mathcal{B}(X_i,r_{\mathcal{X}_n}(X_i))}  (X_j - X_i) - f^{-2/d} \nabla \log f (X_i) \right\|$;
\item $I_2 = \sup_{i \in \{1,\dots,n\}} \left\|\frac{1}{k \step} \sum_{X_j \in \mathcal{B}(X_i,r_{\mathcal{X}_n}(X_i))}  \frac{(X_j - X_i)^{\otimes 2}}{2} -  \frac{f(X_i)^{-2/d} I_d}{2} \right\|$ and 
\item $\forall m > 2, I_m = \sup_{i \in \{1,\dots,n\}} \left\|\frac{1}{k} \sum_{X_j \in \mathcal{B}(X_i,r_{\mathcal{X}_n}(X_i))}  (X_j - X_i)^{\otimes m} \right\|$.
\end{itemize}

In the remainder of this Section, we show that, with probability greater than $1 - \frac{C}{n}$, 
\begin{enumerate}[label=(\roman*)]
\item  $I_1 \leq C ( \frac{\sqrt{\log n} n^{1/d}}{k^{1/2+1/d}} + \left(\frac{k}{n}\right)^{1/d})$;
\item  $I_2 \leq C\left(\sqrt{\frac{\log n}{k}} + \left(\frac{k}{n}\right)^{2/d}\right)$;
\item $s^{-1} I_3 \leq C \left(\frac{\sqrt{\log n} k^{1/d}}{ n^{1/d}k^{1/2}} + \left(\frac{k}{n}\right)^{2/d}\right)$;
\item $\forall m > 3,  I_m\leq C^m \left(\frac{k}{n}\right)^{m/d}$.
\end{enumerate}
Proposition~\ref{pro:randomgraph} is then obtained by injecting these bounds in Equation~(\ref{eq:mainknn}) and by remarking that, since $T$ is compact and $a$ is smooth, then $W_{2,a} \leq C W_2$. 

For $k > 0, r> 0$, we pose 
$V_k = \int_{\mathcal{B}(0,1)} x_1^k dx$. Let $x \in T$ and let us pose 
$N_r = \sum_{i=1}^n 1_{X_i \in \mathcal{B}(x,r)}$ and $P_r = \mu(B(x,r))$. 
Let $0 \leq \delta< 1$. By the multiplicative Chernoff bound,
\[
P\left(|N_r - n P_r| \geq \delta n P_r\right) \leq  2 e^{-\delta^2 n P_r / 3}.
\]
Thus, taking $\delta = \left(\frac{3\log(2n^2)}{n P_r}\right)^{1/2}$,
we obtain 
\beq
\label{eq:Chernoff}
P\left(|N_r - n P_r| \geq  (n P_r 3 \log(2n^2))^{1/2}\right) \leq \frac{1}{n^2}.
\eeq
Taking $r_M = \left( \frac{2k}{n V_0 \min f} \right)^{1/d}$, we have $P_{r_M} \geq \frac{2k}{n}$ and 
\[
P\left(N_{r_M} \leq 2k  - C \sqrt{ k \log n}\right) \leq \frac{1}{n^2}.
\]
If $\frac{k}{\log(n)}$ is sufficiently large, then $\PP(N_{r_M} \geq k) \geq 1 - \frac{1}{n^2}$ and
 \[
 \PP(\radius_{\mathcal{X}_n}(x) \leq r_M) \geq 1 - \frac{1}{n^2}
\]
 and, by
a union-bound, 
\[
\PP(\sup_{x \in \mathcal{X}_n} \radius_{\mathcal{X}_n}(x) \leq r_M) \geq 1 - \frac{1}{n}.
\]
 Therefore, if $\frac{k}{\log(n)}$ is sufficiently large then, for all $m > 3$, 
\begin{align*}
 I_m & \leq  \frac{1}{k} \sum_{X_j \in \mathcal{B}(X_i,\tilde{r})}  \left\|X_j - X_i\right\|^m \\
& \leq r_M^m
\end{align*}
with probability $1 - \frac{1}{n}$ and inequality $(iv)$ follows. 

Let us now prove inequality $(i)$. Let $x \in T$ and $r = \left( \frac{k}{n V_0 f(x)} \right)^{1/d}$. By a Taylor expansion,
\begin{align*}
\EE[(X_i - x)& 1_{X_i \in \mathcal{B}(x,r)}] \\
& = \int_{\mathcal{B}(x,r)} (y-x) \mu(dy) \\
& =  \int_{\mathcal{B}(x,r)} (y-x)f(y) dy \\
& =  \int_{\mathcal{B}(x,r)} (y-x)f(x) + (y-x)^{\otimes 2} \nabla f(x) + \frac{(y-x)^{\otimes 3} \nabla^2 f (x)}{2} + O(r^4) dy.
\end{align*}
Therefore, by symmetry of $\mathcal{B}(x,r)$, we have
\[
\EE[(X_i - x)1_{X_i \in \mathcal{B}(x,r)}] =  V_2 r^{d+2} \nabla f(x) + O(r^{d+4})
\]
and, by definition of $s$,
\beq
\label{eq:auxknn1}
\EE[(X_i - x)1_{X_i \in \mathcal{B}(x,r)}]= \frac{k s }{n f^{2/d}(x)} \nabla \log f (x) + O\left(s \left(\frac{k}{n} \right)^{2/d} \right).
\eeq
Let $b_1 = \frac{1}{k s} \sum_{i=1}^n (X_i - x )1_{X_i \in \mathcal{B}(x,r)}$. 
Since 
\[
\|(X_i - x )1_{X_i \in \mathcal{B}(x,r)} \| \leq r \leq C \left(\frac{k}{n}\right)^{1/d}
\]
and 
\[
\EE[\|X_i - x \|^21_{X_i \in \mathcal{B}(x,r)} ] \leq r^2 P_r \leq r^2 P_{r_M} \leq C \left( \frac{k}{n} \right)^{1+2/d}, 
\]
applying Bernstein's inequality, we obtain that there exists $C > 0$ such that 
\[
\forall t > 0, P(\left\|ks b_1 - ks \EE[b_1]\|_\infty \geq t \right) \leq 2 d e^{- \frac{t^2}{ C (k (k/n)^{2/d} + (k/n)^{1/d}t)}}.
\]
Thus, if $\frac{k}{\log(n)}$ is large enough, taking $t = \sqrt{2 C \log(2 d n^2) k} \left(\frac{k}{n}\right)^{1/d}$, we have 
\[
P\left(\left\|ks b_1 - n \EE[(X_i - x)1_{X_i \in \mathcal{B}(x,r)}] \right\|_\infty \geq C \left( \frac{k}{n} \right)^{1/d} \sqrt{k \log n}\right) \leq \frac{1}{n^2}
\]
or 
\[
P\left(\left\|b_1 - \frac{n}{ks} \EE[(X_i - x)1_{X_i \in \mathcal{B}(x,r)}] \right\|_\infty \geq C \left(\frac{n^{1/d} \sqrt{\log n}}{k^{1/2 + 1/d}}\right) \right) \leq \frac{1}{n^2}.
\]
Thus, by Equation~(\ref{eq:auxknn1}), 
\beq
\label{eq:auxknn2}
P\left(\left\|b_1 - f^{-2/d}(x) \nabla \log f (x) \right\|_\infty \geq C \left( \frac{n^{1/d} \sqrt{\log n}}{k^{1/2 + 1/d}} + \left(\frac{k}{n} \right)^{2/d} \right) \right) \leq \frac{1}{n^2}. 
\eeq
Now, since $\|\nabla^2 f\|$ is bounded on $T$, 
\begin{align*}
|P_r - V_0 r^{d}| = \left|\int_{B(x,r)} f(y) - f(x) dy \right | & \leq \int_{B(x,r)} r^2 \max_{y \in T} \|\nabla^2 f(y)\| dy \\
&\leq V_2 r^{d+2} \max_{y \in T} \|\nabla^2 f(y)\|,
\end{align*}
thus
\[
\left|P_r - \frac{k}{n}\right| \leq C\left(\frac{k}{n}\right)^{1+2/d}
\]
and, by Equation~(\ref{eq:Chernoff}), 
\beq
\label{eq:radiusconc}
\PP\left(|N_r - k| \leq C \left(\sqrt{k \log n}+\frac{k^{1+2/d}}{n^{2/d}}\right)\right) \geq 1 - \frac{1}{n^2}.
\eeq
Taking $b_2 = \frac{1}{ks} \sum_{X_i \in \mathcal{B}(x,\radius_{\mathcal{X}_n}(x))} X_i - x$, we have 
\[
\|b_2 - b_1\| \leq \frac{r_M}{ks} |N_r - N_{\radius_{\mathcal{X}_n}(x)}| = \frac{r_M}{ks} |N_r - k| \leq C \frac{n^{1/d}}{k^{1+1/d}} |N_r - k|.
\]
Combining this bound with Equation~(\ref{eq:radiusconc}), we obtain  
\beq
\label{eq:auxknn3}
P\left(\|b_1 - b_2\| \leq C \left( \frac{ n^{1/d} \sqrt{\log n}}{k^{1/2 + 1/d}} + \left( \frac{k}{n} \right)^{1/d} \right)\right) \geq 1 - \frac{1}{n^2}.
\eeq
Combining Equation~(\ref{eq:auxknn2}) and (\ref{eq:auxknn3}), we have, with probability $1 - \frac{2}{n^2}$, 
\begin{align*}
\bigg\|\frac{1}{k \step} \sum_{X_i \in \mathcal{B}(x,r_{\mathcal{X}_n}(x))}  (X_i - x) & - f^{-2/d} \nabla \log f \bigg\| \\
& = \left\|b_2  - f^{-2/d}\nabla \log f \right\| \\
& \leq \left\|b_1-f^{-2/d}\nabla \log f \right\| + C \left( \frac{n^{1/d} \sqrt{\log n} }{k^{1/2+1/d}} + \left( \frac{k}{n} \right)^{1/d} \right) \\
& \leq  C \left( \frac{n^{1/d} \sqrt{\log n} }{k^{1/2+1/d}} + \left(\frac{k}{n}\right)^{1/d}\right).
\end{align*}
Inequality $(i)$ is fianlly obtained by using a union-bound. 

Let us derive inequalities $(ii)$ and $(iii)$ through similar computations. First, using a Taylor expansion, we obtain  
\[ 
\EE[(X_i - x)^{\otimes 2} 1_{X_i \in \mathcal{B}(x,r)}] = V_2 r^{d+2} f(x) I_d + O(r^{d+4}) 
\]
and 
\[
\EE[(X_i - x)^{\otimes 3} 1_{X_i \in \mathcal{B}(x,r)}] = O(r^{d+4}) 
\] 
Letting $a_1 = \frac{1}{2ks} \sum_{i=1}^n (X_i - x)^{\otimes 2} 1_{X_i \in \mathcal{B}(x,r)}$ and 
$c_1 = \frac{1}{ks} \sum_{i=1}^n (X_i - x)^{\otimes 3} 1_{X_i \in \mathcal{B}(x,r)}$, we have, by Bernstein's inequality and if $\frac{k}{n}$ is sufficiently small, 
\[
P\left(\left\|a_1 - \frac{1}{2f^{2/d}(x)} I_d \right\| \geq C \left( \sqrt{\frac{ \log n}{k}} + \left(\frac{k}{n} \right)^{2/d} \right) \right) \leq \frac{1}{n^2} 
\]
and 
\[
P\left(\left\|c_1 \right\| \geq C \left( \frac{ \sqrt{k^{1/d} \log n}}{n^{1/d} k^{1/2}} + \left(\frac{k}{n} \right)^{2/d} \right) \right) \leq \frac{1}{n^2}.
\]
Then, letting $a_2 = \frac{1}{2ks} \sum_{i=1}^n (X_i - x)^{\otimes 2} 1_{X_i \in \mathcal{B}(x,r_{\mathcal{X}_n}(x))}$ and 
$c_2 = \frac{1}{ks} \sum_{i=1}^n (X_i - x)^{\otimes 3} 1_{X_i \in \mathcal{B}(x,r_{\mathcal{X}_n}(x))}$ and using Equation~(\ref{eq:radiusconc}) once more, we obtain, 
\[
P\left(\|a_2 - a_1\| \leq  C\left( \sqrt{\frac{ \log n}{k}} + \left(\frac{k}{n} \right)^{2/d} \right)\right) \geq 1 - \frac{1}{n^2}
\]
and
\[
P\left(\|c_2 - c_1\| \leq C \left(\frac{\sqrt{\log n} k^{1/d}}{ n^{1/d}k^{1/2}} + \left(\frac{k}{n}\right)^{2/d}\right)\right) \geq 1 - \frac{1}{n^2}
\]
From here, we derive 
\[
 \left\|\frac{1}{k \step} \sum_{X_i \in \mathcal{B}(x,r_{\mathcal{X}_n}(x))}  \frac{(X_i - x)^{\otimes 2}}{2} -  \frac{f(x)^{-2/d} I_d}{2} \right\|  \leq 
 C\left( \sqrt{\frac{ \log n}{k}} + \left(\frac{k}{n} \right)^{2/d} \right)
\]
and 
\[
 \left\|\frac{1}{k \step} \sum_{X_i \in \mathcal{B}(x,r_{\mathcal{X}_n}(x))}  (X_i - x)^{\otimes 3} \right\|  \leq 
 C \left(\frac{\sqrt{\log n} k^{1/d}}{ n^{1/d}k^{1/2}} + \left(\frac{k}{n}\right)^{2/d}\right).
\]
Inequalities $(ii)$ and $(iii)$ are finally obtained by a union-bound inequality. 
\section{Proof of Proposition~\ref{pro:LMC}}
\label{subsec:LMCproof}

Let us assume that the assumptions of Proposition~\ref{pro:LMC} are verified. 
Let $\timestep > 0$ and let $(M_n)_{n \geq 0}$ be the Markov chain with $M_0 = 0$ and increments given by 
\[
M_{n+1} = M_n + \left(- \timestep \frac{\partial u}{\partial x_{I_n}} (M_n) + \sqrt{2 \timestep} B_n\right) e_{I_n},
\]
where $e_1,\dots, e_d \in \RR^d$ is the canonical basis of $\RR^d$, $(I_n)_{n \in \NN}$ are independent uniform random variables on $\{1,\dots,d\}$ and  $(B_n)_{n \in \NN}$ are independent Rademacher random variables. 
Let us denote by $\pi_\timestep$ the invariant measure of the Markov chain $(M^n)_{n \geq 0}$ and let 
$n \in \NN$. By the triangle inequality, we have 
\[
W_2(\nu_n, \mu) \leq W_2(\nu_n, \pi_\timestep) + W_2(\pi_\timestep, \stationary). 
\]

Before bounding these two terms, let us prove a few results on $\pi_\timestep$. 
Let $X$ be a random variable drawn from $\pi_{\timestep}$ and let 
\[
\increment = \left(- \timestep \frac{\partial u}{\partial x_I} (X) + \sqrt{2 \timestep} B \right) e_I,
\]
where $I$ is a uniform random variable on $\{1,\dots,d\}$ and $B$ is a Rademacher random variable. Let us start by bounding the second moment of $X$. 
Since $\pi_\timestep$ is the invariant measure of $(M^n)_{n \geq 0}$, $X$ and $X + \increment$ follow the same law and 
\begin{align*}
0 & =\EE[ \|X + \increment \|^2] - \EE[ \|X \|^2]  \\
& = \EE[ 2 \left<X, \increment\right> + \|\increment\|^2 ] \\
& = \EE\left[ - 2 \timestep X_I \frac{\partial u}{\partial x_I} (X) + \left(\timestep \frac{\partial u}{\partial x_I} (X)\right)^2 \right] + 2 \timestep \\
& =  \frac{1}{d}\EE\left[- 2 \timestep <X, \nabla u(X)> + \timestep^2  \|\nabla(u)(X) \|^2\right] + 2 \timestep.
\end{align*}
Thus, by Equations~(\ref{eq:assu1}) and (\ref{eq:assu2}), we have 
\[
\frac{1}{\Dim} \EE[ - 2 \timestep \rho \|X\|^2 + \timestep^2 \Lip^2 \|X\|^2 ] + 2  \timestep \leq 0,
\]
from which we deduce that
\beq
\label{eq:secondmom}
\EE[ \|X \|^2] \leq \frac{2 \Dim }{2  \rho   - \Lip^2 \timestep} \leq \frac{\Dim}{\rho} + O(\Dim \timestep).
\eeq
In addition to this bound, we can obtain a bound $\|X\|_\infty$. Indeed, for any $n \geq 0$ and any $i \in \{1,\dots, d\}$, we have
\begin{align*}
|M^{n+1}_i| & \leq |(M_n)_i - \timestep \frac{\partial u}{\partial x_i}((M_n)_i)|  + \sqrt{2 \timestep} \\
& \leq \left(((M_n)_i)^2 - 2 (M_n)_i\timestep \frac{\partial u}{\partial x_i}((M_n)_i) +  \timestep^2 \left(\frac{\partial u}{\partial x_i}((M_n)_i)\right)^2 \right)^{1/2}+  \sqrt{2 \timestep} \\
& \leq |(M_n)_i| \sqrt{1 - 2 \timestep \rho + \timestep^2 \Lip^2} +  \sqrt{2 \timestep}. 
\end{align*}
Thus, since $\pi_\timestep$ is the invariant measure of $(M_n)_{n \geq 0}$, 
\[
\|X\|_\infty \leq \frac{\sqrt{2 \timestep}}{1 - \sqrt{ 1 - 2 \timestep \rho + \timestep^2 \Lip^2}} 
\]
and there exists $\timestep_0 > 0$ such that
\beq 
\label{eq:maxbound}
\|X\|_\infty \leq \frac{2}{ \rho \sqrt{\timestep}} 
\eeq
as long as $\timestep < \timestep_0$. From now on, we assume that $\timestep < \timestep_0$.

Let us bound $W_2(\nu_n, \pi_\timestep)$. 
For $x \in \RR^d$, we denote by $\nu_x$ the measure of 
\[
M^x= x + \left(- \timestep \frac{\partial u}{\partial x_{I}} (x) + \sqrt{2 \timestep} B\right) e_{I},
\]
where $B$ is a Bernoulli random variable. 
Let $x ,y \in \RR^d$. For any $i \in \{1,\dots,d\}$, we have
\begin{align*}
\EE[(M^x - M^y)_i^2] & = \frac{\Dim - 1}{\Dim} (x_i - y_i)^2  + \frac{1}{\Dim} \left(x_i - y_i - \timestep \left( \frac{\partial u}{\partial x_{i}} (x) - \frac{\partial u}{\partial x_{i}} (y) \right) \right)^2 \\
& = (x_i - y_i)^2 - \frac{2 \timestep}{\Dim} (x_i - y_i) \left(\frac{\partial u}{\partial x_{i}} (x) - \frac{\partial u}{\partial x_{i}} (y)\right) + \frac{\timestep^2}{\Dim} \left( \frac{\partial u}{\partial x_{i}} (x) - \frac{\partial u}{\partial x_{i}} (y) \right)^2 \\
& \leq \left(1 + \frac{\Lip^2 \timestep^2  - 2 \rho \timestep}{\Dim} \right) (x_i - y_i)^2.
\end{align*}
Hence, by definition of the Wasserstein distance, 
\[
W^2_2(\nu_x, \nu_y) \leq \EE[\|M^x - M^y\|^2]  \leq \left(1 + \frac{ \Lip^2 \timestep^2- 2 \rho \timestep}{\Dim} \right) \|x - y\|^2.
\]
Thus, applying Corollary 21 \cite{Ollivier} for the Wasserstein distance of order $2$ and using Equation~(\ref{eq:secondmom}), 
\begin{align*}
W_2(\nu_n, \pi_\timestep) & \leq  \left(1 + \frac{\Lip^2 \timestep^2  - 2 \rho \timestep}{\Dim}\right)^{n/2} W_2(\nu_0, \pi_\timestep) \\
& \leq \left(1 + \frac{\Lip^2 \timestep^2  - 2 \rho \timestep}{\Dim}\right)^{n/2} \EE[\|X\|^2] \\
& \leq \left(1 + \frac{\Lip^2 \timestep^2  - 2 \rho \timestep}{\Dim}\right)^{n/2} \frac{2 \Dim}{2 \rho  - \Lip^2 \timestep}.
\end{align*}
Hence, if 
\[
n \leq \frac{2 \log\left(\frac{ \epsilon (2\rho - L^2 \timestep)}{2 d}\right)}{\log\left(1 + \frac{L^2 \timestep^2 - 2 \rho h}{d}\right)} = O(h^{-1}d \log(d/\epsilon)),
\]
then 
\beq 
\label{eq:lmcfirst}
W_2(\nu_n, \pi_\timestep) \leq \frac{\epsilon}{2}.
\eeq

Let us now bound $W_2(\pi_\timestep, \stationary)$. By Equation~(\ref{eq:assu1}), $\mu$ is a log-concave measure, implying that Assumption~\ref{ass:main} holds. On the other hand since $\EE[ \|X \|^2] < \infty$ and by Remark~\ref{rem:condition}, we can apply Corollary~\ref{cor:rw} with $T = \infty$, $\step = \frac{\timestep}{\Dim}$ and $\tau = d \timestep$ to obtain that there exists $C >0$ depending on $\rho$ such that 
\begin{multline}
\label{eq:mainMCMC}
W_2(\pi_\timestep, \stationary) \leq C \bigg(d \timestep \EE[\|\nabla u(X)\|^2]^{1/2} + d^{3/2} \sqrt{\timestep} + I_1 + \sqrt{d} I_2 + d \log(d \timestep)  I_3 \\
+ \sum_{k=4}^\infty \frac{C^k d ((k-1))^{(k-1)/2}}{\timestep^{(k-3)/2} k!} I_k \bigg),
\end{multline}
where
\begin{itemize}
\item $I_1 = \EE[\|\EE[\frac{\xi d}{\timestep}-\nabla u(X)\mid X]\|^2]^{1/2}$;
\item $I_2 = \EE[\|\EE[\frac{\xi^{\otimes 2} d}{2\timestep} - I_d\mid X]\|^2]^{1/2}$ and
\item $\forall k > 3, I_k = \EE[\|\EE[\frac{\xi^{\otimes k} d}{\timestep}\mid X]\|^2]^{1/2}$.
\end{itemize}
In the remainder of this proof, we are going to prove the following inequalities. 
\begin{enumerate}[label=(\roman*)]
\item $ \EE[\|\nabla u(X)\|^2]^{1/2}  \leq \frac{\Lip \Dim}{\rho} + O(dh)$;
\item $I_1 = 0$;
\item $I_2 \leq \frac{\sqrt{2} \Lip^2 (\Dim \timestep)^{1/2}}{\rho^{3/2}} + O(\Dim^{1/2} \timestep) $;
\item $I_3 \leq  (2  \Lip + 4 \rho ^{-2} \Lip^3  ) \rho^{-1/2} d^{1/2} \timestep + O(d^{1/2} \timestep^{3/2})$; 
\item $\forall k > 3, I_k \leq \left( 2\Lip \rho^{-1} + \sqrt{2}  \right)^{k} d^{1/2} \timestep^{k/2-1}$.
\end{enumerate}
Then, combining these bounds with Equation~(\ref{eq:mainMCMC}), there exists $C > 0$ such that 
\[
W_2(\pi_\timestep, \stationary) \leq C(\Dim^{3/2} \sqrt{\timestep}).
\]
Moreover, whenever $\stationary$ is the Gaussian measure, we can derive a more refined version of Corollary~\ref{cor:rw} using Theorem~\ref{thm:mainGauss} in which taking $\tau = \timestep$ gives
\[
W_2(\pi_\timestep, \stationary) \leq C( \sqrt{\Dim \timestep}).
\]
Overall, there exists $C > 0$, depending on $\rho$ and $\Lip$ such that, if 
\[
\timestep = C \epsilon^{2} \begin{cases} 
\Dim^{-1} \text{ if $\stationary$ is the Gaussian measure} \\
\Dim^{-3} \text{ otherwise} 
\end{cases},
\]
then 
\beq 
\label{eq:lmcsec}
W_2(\pi_\timestep, \stationary) \leq \frac{\epsilon}{2}. 
\eeq
Proposition~\ref{pro:LMC} is then obtained by combining Equations~(\ref{eq:lmcfirst}) and (\ref{eq:lmcsec}).
In order to conclude this proof, let us prove equalities (i)-(v). 

By Equations~(\ref{eq:assu2}) and (\ref{eq:secondmom}), we have 
\begin{align*}
 \EE[\|\nabla u(X)\|^2]^{1/2} & \leq \Lip \EE[\|X\|^2]^{1/2} \\
 & \leq \frac{\Lip \Dim}{\rho} + O(dh)
\end{align*}
and so inequality (i) is verified. 
Furthermore, equality (ii) is true by construction of $\xi$. 
Let us prove (iii). By Equation~(\ref{eq:assu2}), we have 
\begin{align*}
I_2^2 & = \EE\left[\left\|\EE\left[\frac{\xi^{\otimes 2} d}{2\timestep} - I_d\mid X\right]\right\|^2\right]  \\
& = \timestep^2 \EE\left[\sum_{i=1}^d (\nabla u (X))_i^{4} \right] \\
&  \leq \timestep^2 \Lip^4 \EE\left[\sum_{i=1}^d X_i^4\right]. 
\end{align*}
Hence, by  Equation~(\ref{eq:maxbound}) and Equation~(\ref{eq:secondmom}), 
\[
I_2^2 \leq \frac{4 \Lip^4 \timestep}{\rho^2} \EE[\|X\|^2] \leq \frac{2 \Lip^4 \Dim \timestep}{\rho^3} + O(\Dim \timestep^2).
\]
Let us now deal with (iv). We have 
\begin{align*}
I_3 &= \EE\left[\left\|\EE\left[\frac{\xi^{\otimes 3} d} {\timestep}\mid X\right]\right\|^2\right]^{1/2} \\
& = \EE\left[\sum_{i=1}^d \left(2 \timestep \frac{\partial u}{\partial(x_i)}(X) + \timestep^2 \left(\frac{\partial u}{\partial(x_i)}(X)\right)^3 \right)^2 \right]^{1/2}.
\end{align*}
Thus, by Equation~(\ref{eq:assu2}), 
\[
I_3 \leq \EE\left[\sum_{i=1}^d \left(2 \timestep \Lip |X_i| + \timestep^2 \Lip^3 |X_i|^3 \right)^2 \right]^{1/2}
\]
and, by Equation~(\ref{eq:maxbound}), 
\begin{align*}
I_3 & \leq \EE\left[\sum_{i=1}^d (2  \Lip + 4 \rho ^{-2} \Lip^3  )^2 \timestep^2 X_i^2 \right]^{1/2} \\
& \leq  (2  \Lip + 4 \rho ^{-2} \Lip^3  ) \timestep \EE[\|X\|^2]^{1/2}.
\end{align*}
And finally, by Equation~(\ref{eq:secondmom}), 
\[
I_3 \leq (2  \Lip + 4 \rho ^{-2} \Lip^3  ) \rho^{-1/2} d^{1/2} \timestep + O(d^{1/2} \timestep^{3/2}). 
\]
Finally, we have 
\begin{align*}
 I_k^2 &= \EE\left[\left\|\EE\left[\frac{\xi^{\otimes k} d}{\timestep}\mid X\right]\right\|^2\right] \\
 & = \frac{1}{\timestep^2} \EE_X\left[\sum_{i=1}^d \EE_B\left[\left(  -(\timestep \nabla u (X))_i + \sqrt{2 \timestep} B \right)^{k}\right]^2 \right] \\
 & \leq \frac{1}{\timestep^2} \sum_{i=1}^d \EE\left[ \left(  \Lip \timestep |X_i| + \sqrt{2 \timestep} \right)^{2k}\right] \\
 & \leq   \left( 2\Lip \rho^{-1} + \sqrt{2}  \right)^{2k} d \timestep^{k-2}.
\end{align*}
concluding the proof. 

\section{Technical results}
\label{sec:technical}
In this Section, we provide the proofs of the intermediary results used to derive Theorems~\ref{thm:mainGauss},\ref{thm:main2},\ref{thm:WpGauss} and \ref{thm:WpGaussexch}.
\subsection{Proof of Proposition~\ref{pro:curvdim}}
\label{sec:curvdim}

We are going to prove Proposition~\ref{pro:curvdim} by induction for the case $\rho \neq 0$, the case $\rho = 0$ can be obtained in a similar manner. First, by Equation~(\ref{eq:gradbound}), the result is true for $k = 1$. 
Now, let $k \in \NN$ and suppose that, 
\[
\forall t > 0,  \|\nabla^k P_t \test\|_a \leq e^{-\rho t \max(1, k/2)} \left( \frac{2 \rho d}{e^{(2 \rho t)/(k-1)} - 1}\right)^{(k-1)/2} (P_t \|\nabla \test\|_a^2)^{1/2}.
\]
Let $x \in \RR^d$ and 
let $(e_1,\dots,e_d)$ be an orthonormal basis of $\RR^d$ with respect to the $a(x)$-scalar product $<.,.>_{a(x)}$. 
We have 
\begin{align*}
\|\nabla^{k+1} P_t \test(x)\|_{a(x)}^2 & =  \sum_{i=1}^d \sum_{j \in \{1,\dots,d\}^{k}}  \left<\nabla^{k+1} P_t \test(x), e_1 \otimes e_{j_1} \otimes \dots \otimes e_{j_k} \right>_{a(x)}^2 \|\\
& = \sum_{i=1}^d \sum_{j \in \{1,\dots,d\}^{k}}   \left<\nabla^{k} < \nabla P_t \test(x),  e_1>_{a(x)}, e_{j_1} \otimes \dots \otimes e_{j_k} \right>_{a(x)}^2 \|\\
& = \sum_{i=1}^d \|\nabla^{k} <\nabla P_t \test(x), e_i>_{a(x)} \test\|_{a(x)}^2 \\
& = \sum_{i=1}^d  \lim_{\epsilon \rightarrow 0} \|\nabla^k (P_t \test (x + \epsilon a(x) e_i) - P_t\test (x) )\|_{a(x)}^2. 
\end{align*}
and thus
\[
\label{eq:curvdimaux}
\|\nabla^{k+1} P_t \test(x)\|_{a(x)}^2 
= \sum_{i=1}^d  \lim_{\epsilon \rightarrow 0} \|\nabla^k (P_t \test (x + \epsilon a(x) e_i) - P_t\test (x) )\|_{a(x)}^2.
\]
Let $\epsilon > 0$ and let $(X_t)_{t \geq 0}$ and $(\tilde{X}^\epsilon_t)_{t \geq 0}$ be two diffusion processes with infinitesimal generator $\mathcal{L}_\mu$ and started respectively at $x$ and $x+\epsilon a e_1$. Letting  $\psi_\epsilon : y \rightarrow \EE[\test(\tilde{X}^\epsilon_t) \mid X_t =y]$, we have
\begin{align*}
P_t \test (x + \epsilon a(x) e_i) - P_t\test (x) & = \EE[\test(\tilde{X}^\epsilon_t) - \test(X_t)] \\
& = \EE[ \EE[\test(\tilde{X}^\epsilon_t) \mid X_t] - \test(X_t)] \\
& = P_t(\psi_\epsilon - \test)(x).
\end{align*}
Then, using our induction hypothesis and the Markov property of the semigroup $(P_t)_{t \geq 0}$, we have that
\begin{multline*}
\left\|\nabla^k P_t(\psi_\epsilon - \test)(x) \right\|^2_{a(x)} = \left\|\nabla^k P_{t (k-1)/k} P_{t/k} (\psi_\epsilon - \test)(x) \right\|^2_{a(x)} \\
\leq  e^{- \rho t \max(2,k) \frac{k-1}{k}} \left( \frac{2 \rho d}{e^{2 \rho  t/{k}} - 1} \right)^{k-1} P_{t\frac{k-1}{k}} \left\|\nabla P_{t/k}(\psi_\epsilon - \test)(x) \right\|^2_{a(x)}.
\end{multline*}
Then, by Theorem 4.7.2~\cite{Markov} and Jensen's inequality, 
\begin{align*}
\left\|\nabla^k P_t (\psi - \test)(x)\right\|^2_{a(x)} & \leq 
 e^{-\rho t (k-1)} d^{k-1} \left( \frac{2 \rho }{e^{2 \rho t/k} - 1} \right)^{k} P_t \left|\psi - \test \right|^2(x) \\
 & \leq e^{-\rho t (k-1)} d^{k-1} \left( \frac{2 \rho }{e^{2 \rho t/k} - 1} \right)^{k} \EE\left[|\test(\tilde{X}^\epsilon_t) - \test(X_t)|^2\right].
\end{align*}
By Theorem 3.2.4 \citep{Markov} and Theorem 2.2 \citep{duality}, we can take $\tilde{X}^\epsilon_t$ such that, $d_a(\tilde{X}^\epsilon_t,X_t) \leq d_a(\tilde{X}^\epsilon_0, X_0) e^{-\rho t}$ almost surely. 
Using such $\tilde{X}^\epsilon_t$, we have 
\begin{align*}
\EE\left[\lim_{\epsilon \rightarrow 0} \left|\frac{ \test(\tilde{X}^\epsilon_t) -  \test(X_t) }{\epsilon} \right| \right] 
& \leq \EE\left[\lim_{\epsilon \rightarrow 0} \left|\frac{<\tilde{X}^\epsilon_t - X_t,\nabla \test(X_t) >}{\epsilon}  \right| \right] \\
& \leq \EE\left[\lim_{\epsilon \rightarrow 0} \left|\frac{\|\tilde{X}^\epsilon_t - X_t\|_{a^{-1}(X_t)}}{\epsilon} \|\nabla \test(X_t)\|_{a(X_t)} \right| \right] \\
& \leq \EE\left[\lim_{\epsilon \rightarrow 0} \left|\frac{d_a(\tilde{X}^\epsilon_t,X_t)}{\epsilon} \|\nabla \test(X_t)\|_{a(X_t)} \right| \right] \\
& \leq e^{-\rho t} \EE\left[\lim_{\epsilon \rightarrow 0} \left|\frac{d_a(\tilde{X}^\epsilon_0, X_0)}{\epsilon} \|\nabla \test(X_t)\|_{a(X_t)} \right| \right] \\
& \leq e^{-\rho t} \EE\left[\|\nabla \test(X_t)\|_{a(X_t)} \right].
\end{align*}
Since a similar result holds for all $(e_i)_{i \in \{1,\dots,d\}}$, combining this bound with Equation~(\ref{eq:curvdimaux}) yields 
\[
\|\nabla^{k+1} P_t \test\|^2_a \leq
e^{-\rho t (k+1)} \left( \frac{2 \rho d }{e^{2 \rho d t/k} - 1} \right)^{k} \left( \sqrt{P_t \|\nabla \test\|_a^2} \right)^2,
\]
concluding the proof. 

\subsection{Proof of Lemma~\ref{lem:analytic}}
\label{sec:analproof}
We prove the result under the assumptions corresponding to the general case. The result can be obtained similarly under the Gaussian assumptions. 

Before proving the Lemma, let us prove a crude bound on the functions $(f_k)_{k \geq 1}$ which will prove useful many times in the proofs. 
For $t> 0, k\in \NN$, let us recall that
\[
f_k(t) = \begin{cases} 
 e^{- \rho t \max(1,k/2)} \left( \frac{2\rho d}{e^{2\rho t/(k-1)}-1} \right)^{(k-1)/2}  \textit{ if } \rho \neq 0 \\
\left(\frac{d (k-1)}{t}\right)^{(k-1)/2} \textit{ if } \rho = 0 
\end{cases}.
\]
First, if $\rho > 0$, we have 
\[
\frac{2 \rho}{e^{2 \rho t / (k-1)} - 1} \leq \frac{k-1}{t}.
\]
On the other hand, if $\rho < 0$, 
\[
\frac{2 \rho}{e^{2 \rho t / (k-1)} - 1} = e^{2 |\rho| t / (k-1)} \frac{2 |\rho|}{e^{2 |\rho| t / (k-1)} - 1} \leq e^{2 |\rho| t / (k-1)}\frac{k-1}{t}.
\]
Thus, taking $D = \max(1, e^{- \rho T})$, the functions $f_k$ verify
 \beq
 \label{eq:fkbound}
\forall k \geq 1, f_k(t) \leq D^{\max(1, k/2) + 2/(k-1)} \left(\frac{d (k-1)}{t}\right)^{(k-1)/2} \leq  D^{2k} \left(\frac{d (k-1)}{t}\right)^{(k-1)/2} .
\eeq

Let $\test$ be a bounded and measurable function on $E$, let $t > 0$ and let $k$ be a strictly positive integer. By Proposition~\ref{pro:curvdim}, we have 
\[
\|\nabla^k P_t \test\|_a^2 \leq f_k(t) P_t \|\nabla \test\|_a^2
\]
and, since $\|\nabla \test\| \leq M$, 
\[
\|\nabla^k P_t \test\|_a^2 \leq M^2 f_k(t). 
\]
Then, by Equation~(\ref{eq:fkbound}), there exists $C >0$ such that 
\beq
\label{eq:derivativebornee}
\|\nabla^k P_t \test\|_a^2 \leq C^k (k-1)^{(k-1)/2}.
\eeq

Now, let $x,x' \in E$ and $l \in \NN$. Since $E$ is convex, we can use a Taylor expansion with remainder to obtain that there exists $\xi$ on the segment $(x,x')$ such that
\begin{multline*}
P_t \test(x') - P_t \test(x) = \\
\sum_{k=1}^{l-1} \frac{1}{k!} <(x'-x)^{\otimes k}, \nabla^k P_t \test(x) > + \frac{1}{l!} <(x'-x)^{\otimes l}, \nabla^l P_t \test (\xi)>. 
\end{multline*}
Applying Cauchy-Schwarz inequality, 
\begin{multline*}
\left| P_t \test(x') - P_t \test(x) - \sum_{k=1}^{l-1} \frac{1}{k!} <(x'-x)^{\otimes k}, \nabla^k P_t\test  (x) > \right| \leq \\
 \frac{1}{l!} \|x'-x\|^l_{a^{-1}(\xi)} \|\nabla^l P_t \test(\xi)\|_{a(\xi)}.
\end{multline*}
Since $a^{-1}$ is continuous, it is bounded on the segment $[x,x']$ so there exists $C' > 0$ such that 
\[
\|x' - x\|_{a^{-1}(\xi)} \leq \|x'-x\| \|a^{-1}(\xi)\|^{1/2} \leq C'.
\]
Therefore, by Equation~(\ref{eq:derivativebornee}), we obtain that 
\[
\left| P_t \test(x') - P_t \test(x) - \sum_{k=1}^l \frac{1}{k!} <(x'-x)^{\otimes k}, \nabla^k P_t \test (x)>\right| \leq (C C')^l \frac{(l-1)^{(l-1)/2}}{l!}. 
\]
By Stirling's formula, the right-hand term of the previous equation converges to zero when $l$ goes to infinity, thus
\[
\int_E \mathcal{L}_\nu P_t \test (x) d\nu(x) = \EE\left[\sum_{k=1}^\infty \frac{1}{s k!} \left<(X_t-X_0)^{\otimes k} , \nabla^k P_t \test(X_0)\right>\right].
\]
Finally, since, by assumption, 
\[
\EE\left[\sum_{k=1}^\infty \frac{1}{s k!} \|X_t-X_0\|_{a^{-1}(X_0)}^{k} \|\nabla^k P_t \test(X_0)\|\right] < \infty, 
\]
we can take the conditional expectation with respect to $X_0$ and conclude the proof.
\subsection{Proof of Lemma~\ref{lem:ippgauss}}
Let $x \in \RR^d$ and let $\test  \in C^\infty(\RR^d, \RR)$ be a bounded function.
The function $P_t \test$ admits the following representation (see e.g. \citep{Markov}, Equation 2.7.3)
\beq
\label{eq:OU}
P_t \test (x) = \int_{\RR^d} \test(xe^{-t} + \sqrt{1-e^{-2t}}y) d\gamma(y).
\eeq
Thus, we have 
\beq 
\label{eq:OUderiv}
\frac{\partial^k P_t \test}{\partial x_{i_1} \dots \partial x_{i_k}}(x) = e^{-kt} \int_{\RR^d} \frac{\partial^k \test}{\partial x_{i_1} \dots \partial x_{i_k}}(xe^{-t} + \sqrt{1-e^{-2t}}y) d\gamma(y)
\eeq
By definition, the Hermite polynomials satisfy the following recurrence relationship 
\[
\forall k \in \NN, i \in \{1,\dots,d\}^k, y \in \RR^d, H_{i_1,\dots,i_k} (y) = y_{i_k} H_{i_1,\dots,i_{k-1}} (y) - \frac{\partial H_{i_1,\dots,i_{k-1}}}{\partial y_{i_k}} (y).
\] 
Thus, letting $\psi(y) = \test(xe^{-t} + \sqrt{1- e^{-2t}}y)$ and taking $i \in \{1,\dots,d\}^k, j \in \{1,\dots,d\}$, integrating by parts with respect to $\gamma$ yields 
 \begin{align*}
\int_{\RR^d} H_i(y) \frac{\partial \test}{\partial x_j}(e^{-t} x + & \sqrt{1-e^{-2t}} y)  d\gamma(y)  \\
& = (e^{-2t} - 1)^{-1/2} \int_{\RR^d} H_i(y) \frac{\partial \psi}{\partial y_j}(y)  d\gamma(y) \\
 &= (e^{-2t} - 1)^{-1/2} \int_{\RR^d} \left( \frac{\partial H_i  \psi}{\partial y_j}(y)  - \frac{\partial H_i}{\partial y_j}(y) \psi(y) \right) d\gamma(y) \\
& =  (e^{-2t} - 1)^{-1/2} \int_{\RR^d} \left( y_j H_i (y)  \psi(y)  - \frac{\partial H_i}{\partial y_j}(y) \psi(y) \right) d\gamma(y) \\
 & =  (e^{-2t} - 1)^{-1/2} \int_{\RR^d} H_{i_1,\dots,i_k,j} (y) \psi(y) d\gamma(y) \\
 & = (e^{-2t} - 1)^{-1/2} \int_{\RR^d} H_{i_1,\dots,i_k,j} (y) \test(e^{-t} x + \sqrt{1-e^{-2t}} y) d\gamma(y).
 \end{align*}
Hence, taking $k \in \NN$ and performing $k-1$ successive integrations by parts on Equation~(\ref{eq:OUderiv}), we obtain 
\[
\frac{\partial^k P_t \test}{\partial x_{i_1} \dots \partial x_{i_k}}(x) = \frac{e^{-t}}{(e^{2t}-1)^{k/2}}  \int_{\RR^d} H_i (y) \test(xe^{-t} + \sqrt{1-e^{-2t}}y) d\gamma(y).
\]
\subsection{Proof of Lemma~\ref{lem:hypercon}}
\label{sec:hypercon}
Let $(M_k)_{k \in \NN}$ be such that $M_k \in (\RR^d)^{\otimes k}$ for any $k \in \NN$. 
Let us start with the case $1 \leq p < 2$. We have, by Jensen's inequality,
\[
\EE[\|\sum_{k=1}^\infty M_k \mathcal{H}_{k-1}(Z)\|^p]^{1/p} \leq \EE[\|\sum_{k=1}^\infty M_k \mathcal{H}_{k-1}(Z)\|^2]^{1/2}.
\]
Then, 
\begin{align*}
\EE[& \|\sum_{k=1}^\infty M_k \mathcal{H}_{k-1}(Z)\|^2] \\
& = \EE\left[\sum_{k=1}^\infty \sum_{j=1}^d \left(\sum_{i \in \{1,\dots,d\}^{k-1}} (M_k)_{j,i_1,\dots,i_{k-1}} H_i(Z) \right)^2\right] \\
& = \sum_{k,k'=1}^\infty \sum_{j=1}^d  \sum_{\substack{i \in \{1,\dots,d\}^{k-1} \\ {l \in \{1,\dots,d\}^{k'-1}}}}  (M_k)_{j,i_1,\dots,i_{k-1}} (M_{k'})_{j,l_1,\dots,l_{k'-1}} \EE[ H_i(Z) H_l(Z)].
\end{align*}
Finally, since, $\EE[H_i(Z) H_l(Z)]$ is equal to zero if $i \neq l$, we have 
\begin{align*}
\EE[\|\sum_{k=1}^\infty M_k \mathcal{H}_{k-1}(Z)\|^2] & = \sum_{k=1}^\infty \sum_{j=1}^d  \sum_{i \in \{1,\dots,d\}^{k-1}} (M_k)_{j,i_1,\dots,i_{k-1}}^2 \EE[H_i(Z)^2] \\
& = \sum_{k=1}^\infty \|M_k\|_{H}^2. 
\end{align*}

Let us now consider the case $p > 2$. Let us pose $t  = \log(\sqrt{p-1})$. 
Since the Ornstein-Uhlenbeck semigroup $(P_t)_{t \geq 0}$ is hypercontractive (see e.g. Theorem 5.2.3 \citep{Markov}), we have 
\[
\forall \test \in L^2(\gamma), \EE[|P_t \test(Z)|^p]^{1/p} \leq \EE[\test(Z)^2]^{1/2}. 
\]
This inequality can be extended to vector-valued functions $\test$, in which case we have
\[
\forall \test, \|\test\|\in L^2(\gamma), \EE[\|P_t \test(Z)\|^p]^{1/p} \leq \EE[(P_t \|\test(Z)\|)^p]^{1/p} \leq \EE[\|\test(Z)\|^2]^{1/2}.
\]
For any $k \in \NN$, the entries of $\mathcal{H}_{k-1}$ are Hermite polynomials and thus eigenvectors of $P_t$ with eigenvalue $e^{-(k-1)t} = (p-1)^{-(k-1)/2}$. 
Therefore 
\begin{align*}
\EE[\|\sum_{k=1}^\infty M_k \mathcal{H}_{k-1}(Z)\|^p]^{1/p} & = \EE[\|\sum_{k=1}^\infty (p-1)^{(k-1)/2} M_k P_t \mathcal{H}_{k-1}(Z)\|^p]^{1/p} \\
& =  \EE[\|P_t \sum_{k=1}^\infty (p-1)^{(k-1)/2} M_k  \mathcal{H}_{k-1}(Z)\|^p]^{1/p} \\
& \leq \EE[\|\sum_{k=1}^\infty (p-1)^{(k-1)/2} M_k  \mathcal{H}_{k-1}(Z)\|^2]^{1/2} \\
& \leq \left(\sum_{k=1}(p-1)^{k-1}  \|M_k\|_{H}^2\right)^{1/2},
\end{align*}
concluding the proof. 
\section{Approximation arguments}
\label{sec:approx}
In this Section, we present the approximation necessary to conclude the proofs of Theorems~\ref{thm:mainGauss},  \ref{thm:WpGauss}, \ref{thm:WpGaussexch} and \ref{thm:main2}.
\subsection{Gaussian case}
\label{sec:approxW2}
Let us first detail the approximation argument required to conclude the proof of 
Theorem \ref{thm:WpGaussexch}. Theorems~\ref{thm:mainGauss} and \ref{thm:WpGauss} can be obtained through similar computations. 

Suppose the measure $\nu$ and the stochastic process $(X_t)_{t \geq 0}$ satisfy the assumptions of Theorem~\ref{thm:WpGaussexch}.  
Let $s > 0$ and let
\begin{align*}
S_p(t)  & = \left\|\EE\left[\frac{X_t-X_0}{s}+X_0 \mid X_0\right]\right\|^2 \\
& +  \frac{ \max(1, p-1)}{e^{2t}-1}  \left\|\EE\left[\frac{(X_t-X_0)^{\otimes 2}}{2 s} - I_d \mid X_0 \right]\right\|^2 \\
& +  \sum_{k=3}^{\infty}  \frac{\max(1,p-1)^{k-1}}{4 (s (k-1)!)^2 (e^{2t}-1)^{k-1}}   \left\|\EE[(X_t-X_0)^{\otimes k} \mid X_0] \right\|_H^2.
\end{align*}

Let $R > 0, \epsilon_1 = R^{-1}$ and $0<\epsilon_2<1$ . For any $t > 0$, let $X_t^R$ be the orthogonal projection of $X_t$ on $\mathcal{B}(0,R)$, the ball of radius $R$ centred in $0$. Let $Z$ be a standard normal random variable, $N$ be a random variable taking values in the ball of radius $1$ with smooth density and let $I$ be a Bernoulli random variable with parameter $\epsilon_1$ such that $(X_t)_{t \geq 0}, Z, N$ and $I$ are independent. Finally, we pose $ U = \epsilon_1 N$. 
For any $t > 0$, we pose 
\[
\tilde{X}_t = I Z + (1-I) ( U +  X^R_t 1_{\epsilon_2 \leq t \leq 1/\epsilon_2} + X^R_0 1_{\epsilon_2 < t < \epsilon_2^{-1}}). 
\]
Let $\tilde{\nu}_R$ be the measure of $\tilde{X}_0$. This measure admits a density $h$ with respect to the measure $\gamma$ such that $h = \epsilon_2 + f$ with $f \in C^\infty_c(\RR^d, \RR^+)$. Furthermore, for any $t > 0$,  $(\tilde{X}_0,\tilde{X}_t)$ and  $(\tilde{X}_t,\tilde{X}_0)$ follow the same law. Therefore, we can apply the computations of Section~\ref{sec:WpGaussexch} and use the triangle inequality to obtain 
\begin{multline}
\label{eq:approxGaussmain}
W_p(\tilde{\nu}_R,\gamma) \leq \epsilon_1 \int_0^\infty e^{-t} \EE[S_Z(t)^{p/2}]^{1/p} dt + (\int_0^{\epsilon_2} + \int_{1/\epsilon_2}^\infty) e^{-t} \EE[\tilde{S}_{p,1}(t)^{p/2}]^{1/p} dt  \\
+ \int_{\epsilon_2}^{1/\epsilon_2}   e^{-t} \EE[\tilde{S}_{p,2}(t)^{p/2}]^{1/p} dt,
\end{multline}
where
\[
S_Z(t) =  \|Z\|^2 + \frac{  d(\max(1, p-1)) }{e^{2t} -1}
\]
and
\[
\tilde{S}_{p,1}(t) = \|X_0^R + U\|^2 +  \frac{d(\max(1, p-1))}{e^{2t} -1}
\]
and
\begin{align*}
\tilde{S}_{p,2}(t) =  & \left\|\EE\left[\frac{X^R_t-X^R_0}{s} + (X^R_0 +\epsilon_1  U) \mid X^R_0 + U\right]\right\|^2 \\
& + \frac{\max(1, p-1)}{e^{2t} -1}  \left\|\EE\left[\frac{(X^R_t-X^R_0)^{\otimes 2}}{2 s}  - I_d \mid X^R_0 + U \right]\right\|^2 \\
& +  \sum_{k=3}^{\infty} \frac{\max(1, p-1)^{k-1}}{4((k-1)! s)^2 (e^{2t}-1)^{k-1} }  \left\|\EE[(X^R_t-X^R_0)^{\otimes k}\mid X^R_0 + U]\right\|_H^2.
\end{align*}
First, since $Z$ admits a finite moment of order $p$, there exists $C > 0$ such that 
\beq
\label{eq:SZ}
\int_0^\infty e^{-t} \EE[S_Z(t)^{p/2}]^{1/p} dt \leq C. 
\eeq
Then, since $X_0^R$ is the orthogonal projection of $X_0$ on $\mathcal{B}(0,R)$, 
\[
\|X_0^R + U\| \leq \|X_0^R \| + U \leq \|X_0\| + \epsilon_1
\]
 and, since $\nu$ admits a finite moment of order $p$, we have that there exists $C > 0$ such that 
\beq
\label{eq:approxGaussaux1}
\EE[\tilde{S}_{p,1}(t)^{p/2}]^{1/p} \leq C\left( 1 + \epsilon_1+ \frac{1}{\sqrt{e^{2t} -1} }\right). 
\eeq
Now, let 
\begin{align*}
\tilde{S}_{p,3}(t) =  & \left\|\EE\left[\frac{X^R_t-X^R_0}{s} + X^R_0  \mid X^R_0 + U \right]\right\|^2 \\
& + \frac{\max(1, p-1)}{e^{2t} -1}  \left\|\EE\left[\frac{(X^R_t-X^R_0)^{\otimes 2}}{2 s}  - I_d \mid X^R_0 + U \right]\right\|^2 \\
& +  \sum_{k=3}^{\infty} \frac{(\max(1, p-1))^{k-1}}{4( (k-1)! s)^2 (e^{2t}-1)^{k-1} }  \left\|\EE[(X^R_t-X^R_0)^{\otimes k}\mid X^R_0 + U ]\right\|_H^2.
\end{align*}
By the triangle inequality, we have 
\[
\EE[\tilde{S}_{p,2}^{p/2}]^{1/p} \leq \EE[\tilde{S}_{p,3}^{p/2}]^{1/p} + \epsilon_1 \EE[\|U\|^{p}]^{1/p}
\]
and, since $\|U\| \leq \epsilon_1$, 
\beq
\label{eq:approxGaussaux0}
\EE[\tilde{S}_{p,2}^{p/2}]^{1/p} - \EE[\tilde{S}_{p,3}^{p/2}]^{1/p} \leq \epsilon_1. 
\eeq
Now, let 
\begin{align*}
\tilde{S}_{p,4}(t) =  & \left\|\EE\left[\frac{X^R_t-X^R_0}{s} + X^R_0  \mid X_0 \right]\right\|^2 \\
& + \frac{\max(1, p-1)}{e^{2t} -1}  \left\|\EE\left[\frac{(X^R_t-X^R_0)^{\otimes 2}}{2 s}  - I_d \mid X_0  \right]\right\|^2 \\
& +  \sum_{k=3}^{\infty} \frac{(\max(1, p-1))^{k-1}}{4( (k-1)! s)^2 (e^{2t}-1)^{k-1} }  \left\|\EE[(X^R_t-X^R_0)^{\otimes k}\mid X_0 ]\right\|_H^2.
\end{align*}
Since $(X^R_t)_{t \geq 0}$ and $U$ are independent and since $X^R_0$ is $X_0$-measurable, we have 
\[
\EE[\tilde{S}_{p,3}(t)^{p/2}]^{1/p} = \EE[\EE[\tilde{S}_{p,4}(t)\mid X^R_0 + U]^{p/2}]^{1/p}.
\]
Thus, applying Jensen's inequality yields 
\beq
\label{eq:approxGaussaux2}
\EE[\tilde{S}_{p,3}(t)^{p/2}]^{1/p} \leq \EE[\tilde{S}_{p,4}(t) ^{p/2}]^{1/p}.
\eeq
From here, we have 
\begin{align*}
\tilde{S}_{p,4}(t) & - S_p(t) - \left(\|X_0^R\|^2 + \frac{d \max(1, p-1)}{e^{2t} - 1} \right)  1_{X_0 \notin \mathcal{B}(0,R)} \\
&  \leq  \left(\sum_{k=1}^\infty \frac{(\max(1, p-1))^{k-1}}{s^2 (k-1)!} \EE[\|X^R_t - X_0^R\|^{2k} (1_{X_t \notin \mathcal{B}(0,R)} + 1_{X_0 \notin \mathcal{B}(0,R)}) \mid X_0]\right) \\
& \leq \EE\left[\sum_{k=1}^\infty \frac{(\max(1, p-1))^{k-1}}{s^2 (k-1)! (e^{2t}-1)^{k-1}}\|X^R_t - X_0^R\|^{2k} (1_{X_t \notin \mathcal{B}(0,R)} + 1_{X_0 \notin \mathcal{B}(0,R)}) \mid X_0 \right] \\
& \leq \frac{1}{s} \EE\left[\|X^R_t - X^R_0\|^2 e^{\frac{(\max(1, p-1)) \|X^R_t - X^R_0\|^2}{e^{2t} -1}}  (1_{X_t \notin \mathcal{B}(0,R)} + 1_{X_0 \notin \mathcal{B}(0,R)}) \mid X_0 \right]. 
\end{align*}
Thus, by the triangle inequality and by Jensen's inequality, 
\begin{align*}
\EE[\tilde{S}_{p,4}& (t)^{p/2}]^{1/p} \leq  \EE[S_{p}(t)^{p/2}]^{1/p} + \EE[\|X_0^R\|^{p} 1_{X_0 \notin \mathcal{B}(0,R)}]^{1/p} \\
&+ \left(P(X_0 \notin \mathcal{B}(0,R))\frac{d \max(1, p-1)}{e^{2t} - 1}\right)^{1/2} \\& 
+ \frac{1}{s} \EE\left[\|X^R_t - X^R_0\|^p e^{\frac{p \max(1, p-1) \|X^R_t - X^R_0\|^2}{2(e^{2t} -1)}}  (1_{X_t \notin \mathcal{B}(0,R)} + 1_{X_0 \notin \mathcal{B}(0,R)})\right]^{1/p}.
\end{align*}
Then, since $X^R_t$ is the orthogonal projection of $X_t$ on the convex set $\mathcal{B}(0,R)$, we have that $\|X_0^R\| \leq \|X_0\|$ and $\|X^R_t - X^R_0\| \leq \|X_t - X_0\|$. Thus,  
\begin{align*}
\EE[\tilde{S}_{p,4}& (t)^{p/2}]^{1/p} \leq \EE[S_{p}(t)^{p/2}]^{1/p} + \EE[\|X_0\|^{p} 1_{X_0 \notin \mathcal{B}(0,R)}]^{1/p}  \\ 
&+ \left(P(X_0 \notin \mathcal{B}(0,R))\frac{d \max(1, p-1)}{e^{2t} - 1}\right)^{1/2}  \\
&+ \frac{1}{s} \EE\left[\|X_t - X_0\|^p e^{\frac{p \max(1, p-1) \|X_t - X_0\|^p}{2(e^{2t} -1})}  (1_{X_t \notin \mathcal{B}(0,R)} + 1_{X_0 \notin \mathcal{B}(0,R)})\right]^{1/p}.
\end{align*}
By our assumption, there exists $\xi,M > 0$, depending on $\epsilon_2$, such that, for any $t \in [\epsilon_2, \epsilon_2^{-1}]$, 
\[
\EE\left[\|X_t - X_0\|^{p(1+\xi)} e^{\frac{ (1+\xi) p \max(1, p-1) \|X_t - X_0\|^2}{2(e^{2t}-1})}\right] \leq M. 
\]
Hence, using H\"older's inequality, we obtain that there exists $M', \xi' > 0$ such that 
\begin{multline*}
\EE[\tilde{S}_{p,4}(t)^{p/2}]^{1/p} \leq \EE[S_{p}(t)^{p/2}]^{1/p} + \EE[\|X_0\|^{p} 1_{X_0 \notin \mathcal{B}(0,R)}]^{1/p} \\
 + \left(P(X_0 \notin \mathcal{B}(0,R))\frac{d \max(1, p-1)}{e^{2t} - 1}\right)^{1/2}  
 + M' P(X_0 \notin \mathcal{B}(0,R)^{\xi'}.
\end{multline*}
Combining this bound with Equations~(\ref{eq:approxGaussmain}), (\ref{eq:SZ}), (\ref{eq:approxGaussaux1}), (\ref{eq:approxGaussaux0}) and (\ref{eq:approxGaussaux2}), we obtain that there exists $C > 0$ and $C_1(\epsilon_2),C_2(\epsilon_2) > 0$ such that 
\begin{multline*}
W_p(\tilde{\nu}_R,\gamma) \leq \int_0^\infty \EE[S_p(t)^{p/2}]^{1/p} \\
+ C(\epsilon_2 + e^{-1/\epsilon_2} + \epsilon_1 +  \EE[\|X_0\|^{p} 1_{X_0 \notin \mathcal{B}(0,R)}]^{1/p}  +  C_1(\epsilon_2)P(X_0 \notin \mathcal{B}(0,R))^{C_2(\epsilon_2)})).
\end{multline*}
Since $X_0$ has a finite moment of order $p$ and since $\epsilon_1 = R^{-1}$, letting $R$ go to infinity and $\epsilon_2$ go to zero yields 
\[
\lim_{R \rightarrow \infty} W_p(\tilde{\nu}_R,\gamma) \leq \int_0^\infty \EE[S_{p}(t)^{p/2}]^{1/p} dt.
\]
On the other hand, 
when $R$ goes to infinity, we have that $\tilde{\nu}_R$ converge weakly to $\nu$ and the $p$-moment of $\tilde{\nu}_R$ converges to the $p$-moment of $\nu$. Thus, by Theorem~6.9 \citep{Villani}, $W_p(\tilde{\nu}_R, \nu)$ converges to zero as $R$ goes to infinity. Therefore, 
\[
W_p(\nu,\gamma) \leq \lim_{R \rightarrow \infty} (W_p(\tilde{\nu}_R,\gamma) + W_p(\tilde{\nu}_R, \nu)) \leq \int_0^\infty \EE[S_{p}(t)^{p/2}]^{1/p} dt,
\]
concluding the proof. 
\subsection{General case}
\label{sec:approxWp}
Let us provide the approximation argument necessary to obtain Theorem~\ref{thm:main2}. 
Suppose Assumptions~\ref{ass:main} and \ref{ass:mainnu} are satisfied and let $T, s > 0$. 
We pose
\begin{align*}
 S(t) = & f_1(t)\left\|\EE\left[\frac{X_t-X_0}{s} - b(X_0) \mid X_0\right] \right\|_{a^{-1}(X_0)}\\
& +  f_2(t) \left\|\EE\left[\frac{(X_t-X_0)^{\otimes 2}}{2s} - a(X_0)\mid X_0\right] \right\|_{a^{-1}(X_0)}\\
& + \sum_{k=3}^{\infty} \frac{f_k(t)}{s k!} \left\|\EE[(X_t-X_0)^{\otimes k}\mid X_0]\right\|_{a^{-1}(X_0)},
\end{align*} 
where, for any $k \in \NN$, 
\[
f_k(t) = \begin{cases} 
 e^{- \rho t \max(1,k/2)} \left( \frac{2\rho d}{e^{2\rho t/(k-1)}-1} \right)^{(k-1)/2}  \textit{ if } \rho \neq 0 \\
\left(\frac{d (k-1)}{t}\right)^{(k-1)/2} \textit{ if } \rho = 0 
\end{cases}.
\]

Let $h$ be a density function with respect to the measure $\mu$ such that $\epsilon \leq h \leq \epsilon^{-1}$ for some $\epsilon > 0$. 
By Equation 5.6.2 \citep{Markov}, we have that 
\[
\forall x,y \in E, P_t \log h (x) \leq \log P_t h (y) + \frac{\rho d_a(x,y)^2}{2 (e^{2 \rho t} - 1)}
\]
and, by the triangle inequality, 
\[
\forall x,y \in E, P_t \log h (x) \leq  \log P_t h (y) + \frac{\rho (d_a(x,0) + d_a(0,y))^2}{2 (e^{2 \rho t} - 1)}. 
\]
Thus, we have 
\[
\forall x \in E, P_t \log h(x) \leq \int_E \left( \log P_t h (y) + \frac{\rho (d_a(x,0) + d_a(0,y))^2}{2 (e^{2 \rho t} - 1)} \right) d\mu(y).
\]
Then, applying Jensen's inequality yields 
\[
\int_E \log P_t h  d\mu \leq \log \int_E P_t h d \mu \leq 0
\]
and we obtain that 
\beq
\label{eq:logapprox}
\forall x \in E, P_t \log h(x) \leq  \frac{\rho }{2 (e^{2 \rho t} - 1)}  \int_E (d_a(x,0) +  d_a(0,y))^2 d\mu(y). 
\eeq

Now, let us consider a family $(K_n)_{n \in \NN}$ of compact sets such that for any $n \in \NN$, $K_n \subset K_{n+1} \subset E$ and $\cup_{n \in \NN} K_n = E$.  Let $0 < \epsilon_1, \epsilon_2, \epsilon_3 < 1$. Let $Z$ be a random variable drawn from $\mu$, $N$ be a random variable in the ball of radius $1$ with smooth density  and let $I$ be a Bernoulli random variable with parameter $\epsilon_2$ such that $N,I,Z, (X_t)_{t \geq 0}$ are independent. Finally, we pose $U = \epsilon_1 N$. 
For $t \geq 0$, we pose 
\[
\tilde{X}_t = IZ + (1-I) ( U + X_t 1_{X_t \in K_n, t > \epsilon_3} + X_0 1_{X_0 \in K_n, t \leq \epsilon_3} )
\]
and we denote the measure of $\tilde{X}_0$ by $\tilde{\nu}_{n, \epsilon_1, \epsilon_2}$. For any $x \in E, \test \in c^\infty_c(E)$ and $t> 0$, let 
\[
(\mathcal{L}_{\tilde{\nu}_{n, \epsilon_1, \epsilon_2}})_t \test (x)=  \frac{1}{s}\EE[I \mathcal{L}_\mu \test (\tilde{X}_t) + (1-I)( \test(\tilde{X}_t) - \test(\tilde{X}_0))  \mid \tilde{X}_0 = x].
\]
Let $t > 0$. Since $\tilde{X}_t$ and $\tilde{X}_0$ follow the same law and since $\EE[\mathcal{L}_\mu \test (Z)]= 0$, we have
\[
\EE[(\mathcal{L}_{\tilde{\nu}_{n, \epsilon_1, \epsilon_2}})_t \test (\tilde{X}_0)] = 0. 
\]
Rewriting $(\mathcal{L}_{\tilde{\nu}_{n, \epsilon_1, \epsilon_2}})_t$, we also have, for $t > \epsilon_3$, 
\begin{align*}
\EE[(\mathcal{L}_{\tilde{\nu}_{n, \epsilon_1, \epsilon_2}})_t \test (\tilde{X}_0^K)] & =  \EE[I \mathcal{L}_\mu \test(Z) ] + \EE\left[(1-I) (\test(X_t + U) - \test(X_0 + U))1_{ X_0,X_t \in K_n}\right] \\
& \qquad + \EE[(1-I) 1_{X_0 \notin K_n, X_t \in K_n}(\test(X_t + U) - \test(U))] \\
& \qquad  + \EE[(1-I) 1_{X_t \notin K_n, X_0 \in K_n}(\test(U) - \test(X_0 + U))]. 
\end{align*}
Then, for any bounded real analytic function $\test$ with bounded derivatives of all orders, 
\begin{align*}
\EE[(& \mathcal{L}_{\tilde{\nu}_{n, \epsilon_1, \epsilon_2}})_t \test (\tilde{X}_0)]  \\
& =  \EE[ I \mathcal{L}_\mu \test(Z) ] +  \EE\left[1_{X_0, X_t \in K_n} (1-I)  \sum_{k=1}^\infty \frac{1}{sk!} \left< (X_t - X_0)^{\otimes k}, \nabla^k \test(X_0 + U) \right >\right] \\
& \qquad + \EE[(1-I) 1_{X_0 \notin K_n, X_t \in K_n}(\test(X_t + U) - \test(U))] \\
& \qquad  + \EE[(1-I) 1_{X_t \notin K_n, X_0 \in K_n}(\test(U) - \test(X_0 + U))]. 
\end{align*}

Since $K_n$ is a compact set, there exists a compact set $K_n' \subset E$ and $e(n) > 0$ such that, if $\epsilon_1 < e(n)$, then $(X_0 + U)1_{X_0 \in K_n} \in K_n'$. 
Let us now assume that $\epsilon_1 < e(n)$. 
Then, $\tilde{\nu}_{n, \epsilon_1, \epsilon_2}$ admits a measure $h$ with respect to $\mu$ such that $h = \epsilon + f$ with $\epsilon > 0$ and $f \in C^\infty_c(E,\RR)$. 
Thus, for $t > 0$, we can follow the computations of the proof of Proposition~\ref{pro:Ibound} to obtain 
\[
I((\tilde{\nu}_{n, \epsilon_1, \epsilon_2})_t)_\mu = \int_E ((\mathcal{L}_{\tilde{\nu}_{n, \epsilon_1, \epsilon_2}})_t - \mathcal{L}_\mu) P_t \log P_t h (x) d \tilde{\tilde{\nu}_{n, \epsilon_1, \epsilon_2}(x)}.
\]
Then, using Proposition~\ref{pro:curvdim} along with Cauchy-Schwarz inequality, we obtain 
\[
I((\tilde{\nu}_{n, \epsilon_1, \epsilon_2})_t)_\mu \leq  \begin{cases}
  \EE[(1-I) F(t)^2]^{1/2} I((\tilde{\nu}_{n, \epsilon_1, \epsilon_2})_t)_\mu^{1/2} \text{ if $t \leq \epsilon_3$} \\
  \EE[(1-I) \tilde{S}(t)^2]^{1/2} I((\tilde{\nu}_{n, \epsilon_1, \epsilon_2})_t)_\mu^{1/2} + E(t) \text{ if $t > \epsilon_3$}
 \end{cases}
\]
where 
\[
F(t) = f_1(t) \|b(U + X_0 1_{X_0 \in K_n})\|_{a^{-1}(U + X_0 1_{X_0 \in K_n})} + f_2(t) \sqrt{d}
\]
and
\begin{multline*}
E(t) = \EE[1_{X_0 \notin K_n, X_t \in K_n}(P_t \log P_t h (X_t + U) - P_t \log P_t h(U))] \\
 + \EE[1_{X_t \notin K_n, X_0 \in K_n}(P_t \log P_t h(U) - P_t \log P_t h(X_0 + U))]
\end{multline*}
and
\begin{align*}
\tilde{S}&(t)  = f_1(t) \|b(U)\|_{a^{-1}(U)} 1_{X_0 \notin K_n} + f_2(t) \sqrt{d} 1_{X_0 \notin K_n} \\
&  f_1(t) \left\|\EE\left[\frac{X_t-X_0}{s} 1_{X_t \in K_n}  - b(X_0 + U) \mid X_0 + U\right]  \right\|_{a^{-1}(X_0 + U)} 1_{X_0 \in K_n} \\
& +  f_2(t) \left\|\EE\left[\frac{(X_t-X_0)^{\otimes 2}}{2s} 1_{ X_t \in K_n} - a(X_0 + U)\mid X_0 + U\right] \right\|_{a^{-1}(X_0 + U)}1_{X_0 \in K_n}  \\
& + \sum_{k=3}^{\infty} \frac{f_k(t)}{s k!} \left\|\EE[(X_t-X_0)^{\otimes k} 1_{ X_t \in K_n}\mid X_0 + U]\right\|_{a^{-1}(X_0 + U)} 1_{X_0 \in K_n} ,
\end{align*}
with the functions $f_k$ defined as in Proposition~\ref{pro:curvdim}. 
Finally, by Equation~(\ref{eq:OV}) and by our assumptions, we obtain that 
\beq
\label{eq:approxImain}
(1 - c e^{- \kappa T}) W_{2,a}(\tilde{\nu}_{n, \epsilon_1, \epsilon_2}, \mu) \leq \int_0^{\epsilon_3} \EE[F(t)^2]^{1/2} dt + \int_{\epsilon_3}^T \tilde{S}(t)^{1/2} + E(t)^{1/2} dt. 
\eeq

Let us start by bounding $E(t)$. Let $ t\geq \epsilon_3$, by Equation~(\ref{eq:logapprox}), we have  
\begin{align*}
E(t) & \leq  \frac{\rho}{2 (e^{2 \rho t} - 1)}  \EE[(d_a(X_t + U,0) + d_a(0,Z))^2 1_{X_0 \notin K_n, X_t \in K_n}  \\
& + (d_a(X_0 + U,0) + d_a(0,Z))^2 1_{X_t \notin K_n, X_0 \in K_n} \\
& + (d_a(U,0) + d_a(0,Z))^2 (1_{X_0 \notin K_n, X_t \in K_n} + 1_{X_t \notin K_n, X_0 \in K_n})]
\end{align*}
and thus 
\begin{multline*}
E(t)^{1/2} \leq \frac{\sqrt{\rho}}{\sqrt{2 (e^{2 \rho t} - 1)}} \bigg(2 \EE[d_a(0,Z)^2]^{1/2} P(X_0 \notin K_n) + 2 \EE[d_a(U,0)^2]^{1/2}  \\
 + \EE[d_a(X_t, 0)^2 1_{X_0 \notin K_n}]^{1/2} + \EE[d_a(X_0, 0)^2 1_{X_t \notin K_n}]^{1/2}\bigg).
\end{multline*}
Since $0 \in K_n$ and $\epsilon_1 \leq e(n)$, $U$ is supported on a compact set on which $\|a^{-1}\|$ is bounded by $C > 0$. Hence, by definition of the metric $d_a$ and by Cauchy-Schwarz inequality, 
\[
d_a(0,U) \leq \int_0^1  \|U\|_{a^{-1}(t U)} dt \leq \int_0^1  \|U\| \|a^{-1}(t U)\| dt \leq C \|U\| \leq C \epsilon_1. 
\]
On the other hand, since $d_a(.,0) \in L_2(\mu)$, we know there exists $C > 0$ such that 
\[
\EE[d_a(0,Z)^2]^{1/2} \leq C. 
\]
Therefore, there exists $C(\epsilon_3) > 0$ such that 
\[
E(t)^{1/2} \leq C(\epsilon_3) \bigg(P(X_0 \notin K_n)^{1/2} 
+ \epsilon_1 + \EE[d_a(X_t, 0)^2 1_{X_0 \notin K_n}]^{1/2} + \EE[ d_a(X_0, 0)^2 1_{X_t \notin K_n}]^{1/2}\bigg).
\]
Now, by assumption, there exists $\xi(\epsilon_3), M(\epsilon_3) > 0$ such that 
\[
\EE[d_a(X_t, X_0)^{2 + \xi(\epsilon_3)}]^{1/(2 + \xi(\epsilon_3))} \leq M(\epsilon_3). 
\]
Thus, we can use the triangle inequality and H\"older's inequality to obtain that there exists $\xi(\epsilon_3)', M(\epsilon_3)' > 0$ such that
\begin{align*}
\EE[& d_a(X_t, 0)^2  1_{X_0 \notin K_n}]^{1/2} \\
& \leq \EE[d_a(X_t, X_0)^2 1_{X_0 \notin K_n}]^{1/2} + \EE[d_a(X_0, 0)^2 1_{X_0 \notin K_n}]^{1/2}  \\
&  \leq M(\epsilon_3)' P(X_0 \notin K_n)^{\xi(\epsilon_3)'}  + \EE[d_a(X_0, 0)^2 1_{X_0 \notin K_n}]^{1/2}  \\
\end{align*}
Therefore, we obtained there exists $C(\epsilon_3), \xi'(\epsilon_3) > 0$ such that 
\beq
\label{eq:approx2}
E(t)^{1/2} \leq C(\epsilon_3) \bigg( \epsilon_1 + \EE[d_a(X_0,0)^2 1_{X_0 \notin K_n} ]^{1/2}
  +  P(X_0 \notin K_n)^{\xi'(\epsilon_3)}\bigg).
\eeq

In the following, we denote by $C(n)$ a generic constant depending only on $n$. 
Since $b$ is continuous on $E$ and since the coefficients of $a^{-1}$ are in $C^\infty(E, \RR)$, we have that for any $x,y \in K_n'$, $\|b(x)\| \leq C(n), \|b(y) - b(x)\| \leq C(n) \|y-x\|$ and for any $x \in K_n'$, $i,j \in \{1,\dots,d\}, \| a^{-1}_{i,j}(x) \| \leq C(n)$ and $\|\nabla a^{-1}_{i,j}(x)\| \leq C(n)$. Therefore, for any $k \in \NN$, any $x \in K_n'$ and any $i,j \in \{1,\dots,d\}^k, \|\nabla \Pi_{l=1}^k a^{-1}_{i_l,j_l}(x)\| \leq C(n)^k$. 
Let $x,y \in K_n'$,  $k \in \NN$ and $u \in (\RR^d)^{\otimes k}$. Using Cauchy-Schwarz inequality we obtain that 
\begin{align*}
\|u\|^2_{a^{-1}(y)} - \|u\|^2_{a^{-1}(x)} & = \sum_{i,j \in \{1,\dots,d\}^k} u_i u_j (\Pi_{l=1}^k a^{-1}_{i_l,j_l}(y) - \Pi_{l=1}^k a^{-1}_{i_l,j_l}(x)) \\
& \leq \|u\|^2 \left(\sum_{i,j \in \{1,\dots,d\}^k} \left(\Pi_{l=1}^k a^{-1}_{i_l,j_l}(y) - \Pi_{l=1}^k a^{-1}_{i_l,j_l}(x)\right)^2 \right)^{1/2}
\end{align*}
and thus,
\beq
\label{eq:approxaux1}
\|u\|^2_{a^{-1}(y)} - \|u\|^2_{a^{-1}(x)} \leq C(n)^k \|u\|^2 \|y-x\|. 
\eeq

Let us bound $F(t)$.
First, since $\epsilon_1 < e(n)$, there exists $C> 0$ such that 
\[
 \|b(U)\|_{a^{-1}(U)} \leq C.
\]
Now, using the triangle inequality and Cauchy-Schwarz inequality,  
\begin{align*}
 \EE[\|b(& U + X_0 1_{X_0 \in K_n})\|_{a^{-1}(U + X_0 1_{X_0 \in K_n})}^2]^{1/2}  \\
 & \leq  \EE[\|b(X_0 1_{X_0 \in K_n})\|_{a^{-1}(U + X_0 1_{X_0 \in K_n})}^2]^{1/2}  \\
 & \qquad + \EE[\|b(X_0 1_{X_0 \in K_n}) - b(U + X_0 1_{X_0 \in K_n})\|_{a^{-1}(U + X_0 1_{X_0 \in K_n})}^2]^{1/2}  \\
 & \leq \EE[\|b(X_0 1_{X_0 \in K_n})\|_{a^{-1}(U + X_0 1_{X_0 \in K_n})}^2]^{1/2} \\
 & \qquad +  \EE[\|a^{-1}(U + X_0 1_{X_0 \in K_n})\| \|b(X_0 1_{X_0 \in K_n}) - b(U + X_0 1_{X_0 \in K_n})\|^2]^{1/2}\\
 & \leq \EE[\|b(X_0 1_{X_0 \in K_n})\|_{a^{-1}(U + X_0 1_{X_0 \in K_n})}^2]^{1/2} \\
& \qquad  +  C(n) \EE[\|b(X_0 1_{X_0 \in K_n}) - b(U + X_0 1_{X_0 \in K_n})\|^2]^{1/2}\\
 & \leq \EE[\|b(X_0 1_{X_0 \in K_n})\|_{a^{-1}(U + X_0 1_{X_0 \in K_n})}^2]^{1/2} + \epsilon_1  C(n) .
\end{align*}
Then, by Equation~(\ref{eq:approxaux1}),
\[
\EE[\|b(X_0 1_{X_0 \in K_n})\|_{a^{-1}(U + X_0 1_{X_0 \in K_n})}^2]^{1/2} \leq \EE[\|b(X_0 1_{X_0 \in K_n})\|_{a^{-1}(X_0 1_{X_0 \in K_n})}^2]^{1/2} +\sqrt{\epsilon_1} C(n).
\]
Therefore, 
\[
F(t) \leq f_1(t) (\EE[\|b(X_0)\|_{a^{-1}(X_0)}^2]^{1/2} + C + \sqrt{\epsilon_1} C(n)) + f_2(t) \sqrt{d}.
\]
Since $\|b\|_{a^{-1}} \in L_2(\nu)$ and by Equation~(\ref{eq:fkbound}), we obtain that there exists $C > 0$ such that 
\beq
\label{eq:approx1}
\int_0^{\epsilon_3} \EE[F(t)^2]^{1/2} dt \leq C\sqrt{\epsilon_3} \left( 1 +  C(n) \sqrt{\epsilon_1}   \right). 
\eeq

Finally, let us bound $\EE[\tilde{S}(t)^2]^{1/2}$ for $t \in [\epsilon_3, T]$. 
Let us first pose 
\begin{align*}
R_1 & =  f_1(t) \left\|\EE\left[\frac{X_t-X_0}{s} 1_{X_t \in K_n}  - b(X_0) \mid X_0 + U\right]  \right\|_{a^{-1}(X_0 + U)} 1_{X_0 \in K_n} \\
& +  f_2(t) \left\|\EE\left[\frac{(X_t-X_0)^{\otimes 2}}{2s} 1_{ X_t \in K_n} - a(X_0)\mid X_0 + U\right] \right\|_{a^{-1}(X_0 + U)}1_{X_0 \in K_n}  \\
& + \sum_{k=3}^{\infty} \frac{f_k(t)}{s k!} \left\|\EE[(X_t-X_0)^{\otimes k} 1_{ X_t \in K_n}\mid X_0 + U]\right\|_{a^{-1}(X_0 + U)} 1_{X_0 \in K_n}.
\end{align*}
By the triangle inequality and by Jensen's inequality, we have 
\begin{align*}
\EE[\tilde{S}(t)^2]^{1/2} & - \EE[R_1^2]^{1/2} \leq f_1(t) \EE[\|b(U)\|_{a^{-1}(U)}^2 1_{X_0 \notin K_n}]^{1/2}  +f_2(t) \sqrt{d} P(X_0 \notin K_n)^{1/2} \\
& + f_1(t) \EE[\| b(X_0 + U) - b(X_0) \|^2_{a^{-1}(X_0 + U)} 1_{X_0 \in K_n}]^{1/2} \\
& + f_2(t) \EE[\|a(X_0 + U) - a(X_0) \|^2_{a^{-1}(X_0 + U)} 1_{X_0 \in K_n}]^{1/2}. 
\end{align*}
Since $\epsilon_1 < e(n)$, there exists $C > 0$ such that $\|b(U)\|_{a^{-1}(U)} \leq C$ and 
\begin{multline*}
\EE[\tilde{S}(t)^2]^{1/2} - \EE[R_1^2]^{1/2} \leq f_1(t) \left(\epsilon_1 C(n) + C P(X_0 \notin K_n)^{1/2} \right) \\
 + f_2(t) \left(C(n) + \sqrt{d} P(X_0 \notin K_n)^{1/2} \right). 
\end{multline*}
Therefore,there exists $C,C(\epsilon_3) > 0$ such that 
\beq
\label{eq:Sbound1}
\EE[\tilde{S}(t)^2]^{1/2} - \EE[R_1^2]^{1/2} \leq  C(\epsilon_3)(C(n) \epsilon_1  + C P(X_0 \notin K_n)^{1/2}). 
\eeq

Now, let 
\begin{align*}
R_2 & =  f_1(t) \left\|\EE\left[\frac{X_t-X_0}{s} 1_{X_t \in K_n}  - b(X_0) \mid X_0\right]  \right\|_{a^{-1}(X_0 + U)} 1_{X_0 \in K_n}\\
& +  f_2(t) \left\|\EE\left[\frac{(X_t-X_0)^{\otimes 2}}{2s} 1_{ X_t \in K_n} - a(X_0)\mid X_0\right] \right\|_{a^{-1}(X_0 + U)}1_{X_0 \in K_n} \\
& + \sum_{k=3}^{\infty} \frac{f_k(t)}{s k!} \left\|\EE[(X_t-X_0)^{\otimes k} 1_{ X_t \in K_n}\mid X_0]\right\|_{a^{-1}(X_0 + U)} 1_{X_0 \in K_n}.
\end{align*}
Since $U$ and the process $(X_t)_{t \geq 0}$ are independent, we have, by Jensen's inequality,
\beq
\label{eq:Sbound3}
\EE[R_1^2]^{1/2} \leq \EE[R_2^2]^{1/2}. 
\eeq

Now, let us pose 
\begin{align*}
R_3 & =  f_1(t) \left\|\EE\left[\frac{X_t-X_0}{s} 1_{X_t \in K_n}  - b(X_0) \mid X_0\right]  \right\|_{a^{-1}(X_0)} 1_{X_0 \in K_n}\\
& +  f_2(t) \left\|\EE\left[\frac{(X_t-X_0)^{\otimes 2}}{2s} 1_{ X_t \in K_n} - a(X_0)\mid X_0\right] \right\|_{a^{-1}(X_0)}1_{X_0 \in K_n} \\
& + \sum_{k=3}^{\infty} \frac{f_k(t)}{s k!} \left\|\EE[(X_t-X_0)^{\otimes k} 1_{ X_t \in K_n}\mid X_0 ]\right\|_{a^{-1}(X_0)} 1_{X_0 \in K_n}.
\end{align*}
By Equation~(\ref{eq:approxaux1}) and by the triangle inequality, 
\begin{multline*}
\EE[R_2^2]^{1/2} - \EE[R_3^2]^{1/2} \leq \epsilon_1^{1/2} C(n) \bigg(f_1(t) \EE[ \|b(X_0)\|^2 1_{X_0 \in K_n}]^{1/2} \\ 
+ f_2(t) \EE[\|a(X_0)\|^2 1_{X_0 \in K_n}]^{1/2}  
+ \frac{1}{s} \EE\left[\left(\sum_{k=1}^\infty \frac{f_k(t) C(n)^k}{k!} \|X_t - X_0\|^{2k} 1_{X_0, X_t \in K_n} \right)^2 \right]^{1/2}\bigg).
\end{multline*}
We have $\|X_t - X_0\| 1_{X_0, X_t \in K_n} \leq 2 D$, where $D$ is the diameter of the compact set $K_n$. Thus, by Equation~(\ref{eq:fkbound}) and, since $t \geq \epsilon_3$, there exists $C(n, \epsilon_3) > 0$ such that 
\beq
\label{eq:Sbound2}
\EE[R_2^2]^{1/2} - \EE[R_3^2]^{1/2} \leq C(n, \epsilon_3) \epsilon_1^{1/2}.
\eeq

Finally, we have 
\[
\EE[R_3^2]^{1/2} - \EE[S(t)^2]^{1/2} \leq \EE\left[ \left(\sum_{k=1}^\infty \frac{f_k(t)}{k!} \|X_t - X_0\|_{a^{-1}(X_0)}^k  \right)^2 1_{X_t \notin K_n}\right]^{1/2}.
\]
By our assumption, there exists $\xi(\epsilon_3), M(\epsilon_3) > 0$ such that 
\[
\EE\left[ \left(\sum_{k=1}^\infty \frac{f_k(t)}{k!} \|X_t - X_0\|_{a^{-1}(X_0)}^k  \right)^{2 + \xi(\epsilon_3)} \right]^{1/(2 + \xi(\epsilon_3))} \leq M(\epsilon_3). 
\]
Thus, by H\"older inequality, there exists $M'(\epsilon_3)$ and $\xi'(\epsilon_3)$ such that 
\beq
\label{eq:Sbound4}
\EE[R_3^2]^{1/2} - \EE[S(t)^2]^{1/2} \leq M'(\epsilon_3) P(X_t \notin K_n)^{\xi'(\epsilon_3)} .
\eeq
By combining Equations~(\ref{eq:Sbound1}), (\ref{eq:Sbound3}), (\ref{eq:Sbound2}) and (\ref{eq:Sbound4}), we finally obtain that there exists $C(n, \epsilon_3), C(\epsilon_3)$ and $\xi(\epsilon_3) > 0$ such that 
\beq
\label{eq:Sboundfinal}
\EE[\tilde{S}(t)^2]^{1/2} \leq \EE[S(t)^2]^{1/2} + C(n,\epsilon_3) \epsilon_1^{1/2} + C(\epsilon_3) P(X_0 \notin K_n)^{\xi(\epsilon_3)}. 
\eeq

Now, injecting in Equation~(\ref{eq:approxImain}) the bounds obtained in Equations~(\ref{eq:approx2}), (\ref{eq:approx1}) and (\ref{eq:Sboundfinal}), we obtain that  
\begin{multline*}
(1 - e^{- c \kappa T}) W_{2,a}(\tilde{\nu}_{n, \epsilon_1, \epsilon_2}, \mu) \leq   \int_0^T \EE[S(t)^2]^{1/2} dt + C \epsilon_3^{1/2} (1 + C(n) \epsilon_1^{1/2}  )   \\ 
+ 
  C(n,\epsilon_3) \epsilon_1^{1/2} + C(\epsilon_3)( \EE[d_a(X_0,0)^2 1_{X_0 \notin K_n}]^{1/2} 
  +  P(X_0 \notin K_n)^{\xi(\epsilon_3)}) .
\end{multline*}
Since $d_a(.,0) \in L_2(\nu)$, letting $\epsilon_1$ go to zero, $n$ go to infinity and $\epsilon_2, \epsilon_3$ go to zero yields 
\[
\lim\limits_{ \epsilon_2 \rightarrow 0} \lim\limits_{ n \rightarrow \infty} \lim\limits_{ \epsilon_1 \rightarrow 0} W_{2,a}(\tilde{\nu}_{n, \epsilon_1, \epsilon_2}, \mu) \leq \int_0^T \EE[S(t)^2]^{1/2} dt
\]
and we conclude the proof by remarking that, since $d_a(.,0) \in L_2(\nu) \cap L_2(\mu)$, then by Theorem 6.9 \citep{Villani}, 
\[
\lim\limits_{ \epsilon_2\rightarrow 0} \lim\limits_{ n \rightarrow \infty} \lim\limits_{ \epsilon_1 \rightarrow 0} W_{2,a}(\tilde{\nu}_{n, \epsilon_1, \epsilon_2}, \nu) = 0.
\]

\section*{Acknowledgements}
The author would like to thank J\'er\^ome Dedecker, Michel Ledoux and Yvik Swan for fruitful discussions as well as Chi Tran and Fr\'ed\'eric Chazal for their help during the redaction of the paper. The author is also grateful to anonymous reviewers for their many comments and suggestions. The author was supported by the French D\'el\'egation G\'en\'erale de l'Armement (DGA) and by ANR project TopData ANR-13-BS01-0008.
\bibliographystyle{abbrvnat}
\bibliography{Bibliography}

\end{document}